\documentclass[a4paper,reqno,10pt,oneside]{amsart}
\usepackage[left=2.61cm,right=2.61cm,top=2.72cm,bottom=2.72cm]{geometry}

\usepackage{hyperref}
\usepackage[english]{babel}
\usepackage{mathrsfs,amssymb,enumitem}

\allowdisplaybreaks[1]

\numberwithin{equation}{section}

\newtheorem{theorem}{Theorem}[section]
\newtheorem{proposition}[theorem]{Proposition}
\newtheorem{corollary}[theorem]{Corollary}
\newtheorem{lemma}[theorem]{Lemma}
\theoremstyle{definition}
\newtheorem{remark}[theorem]{Remark}

\newcommand{\N}{\mathbb{N}}
\newcommand{\E}{\mathcal{E}}
\newcommand{\beq}{\begin{equation}}
\newcommand{\eeq}{\end{equation}}
\newcommand{\R}{\mathbb R}
\newcommand{\RN}{\mathbb{R}^N}
\newcommand{\divg}{\operatorname{div}}
\newcommand{\tr}{\operatorname{tr}}
\newcommand{\SN}{\mathbb{S}^{N-1}}
\newcommand{\Y}{\mathcal{Y}}
\newcommand{\Id}{\mathrm{id}}
\newcommand{\dd}{\mathrm{d}}
\newcommand{\DiffQ}{\operatorname{Diff}^{3,\alpha}_Q(\SN)}
\newcommand{\Param}{\mathscr{U}}
\newcommand{\SpecNeu}{\operatorname{Spec}_{\mathrm{Neu}}}
\makeatletter
\@namedef{subjclassname@2020}{\textup{2020} Mathematics Subject Classification}
\makeatother
\begin{document}

\title[Generic nondegeneracy of CMC radial graphs]{Generic nondegeneracy of constant mean curvature\\ radial graphs}

\author{A\MakeLowercase{lessandro} Iacopetti, F\MakeLowercase{ilomena} Pacella}

\subjclass[2020]{35J93, 53A10, 35B20}
\keywords{Radial graphs, CMC hypersurfaces, Genericity}
\thanks{\emph{Acknowledgements.} Research partially supported by Gruppo Nazionale per l'Analisi Matematica, la Pro\-ba\-bi\-li\-t\`a e le loro Applicazioni (GNAMPA) of the Istituto Nazionale di Alta Matematica (INdAM). Alessandro Iacopetti is partially supported by the GNAMPA 2025 project E53C25002010001: ``Strutture Analitiche e Geometriche in PDEs: Regolarit\`a, Fenomeni Critici e Dinamiche Complesse".}

\address[Alessandro Iacopetti]{Dipartimento di Matematica ``Giuseppe Peano", Universit\`a degli Studi di Torino, Via Carlo Alberto 10, 10123 Torino, Italy}
\email{alessandro.iacopetti@unito.it}

\address[Filomena Pacella]{Dipartimento di Matematica, Sapienza Universit\`a di Roma,  P.le Aldo Moro 2, 00185 Roma, Italy}
\email{filomena.pacella@mat.uniroma1.it}

\begin{abstract}
Given a smooth domain $D\subset\mathbb S^{N-1}$, $N\geq 3$, and a positive constant $H$, we study radial graphs over $D$ with constant mean curvature $H$ which meet orthogonally the boundary of the cone spanned by $D$. 
The corresponding problem is given by a quasilinear elliptic equation on $D$ with homogeneous Neumann
boundary condition.

We investigate the structure of the solution set under smooth perturbations of the domain which are parametrized by an open set of
$C^{3,\alpha}$-diffeomorphisms of the sphere. After pulling the equation back to a fixed reference domain, we apply a classical transversality theorem and prove that, in the class of domains for which $N-1$ does not belong to the Neumann spectrum of the Laplace--Beltrami operator, there is a residual set for which every solution is nondegenerate. 
A consequence of our result is that, for every such domain and every $R>0$, the set of solutions whose $C^{2,\alpha}$-norm is bounded by $R$ is finite.
\end{abstract}

\maketitle
\section{Introduction}

Let $D$ be a smooth proper domain on the unit sphere $\SN$, $N\geq3$.
We consider hypersurfaces in $\RN$ which are radial graphs over $D$ and satisfy a geometric boundary condition. More precisely, for a function $\varphi\in C^2(\overline D)$ we define the graph
$$
\Gamma_\varphi:=\{e^{\varphi(p)}p;\ p\in D\}
$$
and assume that
\begin{equation}\label{eq1:intro}
\frac{\partial\varphi}{\partial\nu_{g_0}}=0 \qquad\text{on }\partial D,
\end{equation}
where $\nu_{g_0}$ is the outward unit normal to $\partial D$ with respect to the standard metric $g_0$ on $\SN$.
Let us observe that, if we denote by $\Sigma_D$ the cone spanned by $D$, i.e.
$$
\Sigma_D:=\{rp;\ r>0,\ p\in D\},
$$
then the hypersurface $\Gamma_\varphi$ is strictly star-shaped with respect
to the vertex of the cone, its boundary lies on $\partial\Sigma_D\setminus\{0\}$ and, by \eqref{eq1:intro}, intersects $\partial\Sigma_D$ orthogonally.

In this paper we study constant mean curvature (CMC in short) radial graphs of this type. 
Thus, for a fixed number $H>0$, we consider the following problem:
\begin{equation}\label{eq:intro-CMC}
\begin{cases}
-\divg_{g_0}\!\left(\dfrac{\nabla_{g_0}\varphi}{\sqrt{1+|\nabla_{g_0}\varphi|_{g_0}^2}}\right) +\dfrac{N-1}{\sqrt{1+|\nabla_{g_0}\varphi|_{g_0}^2}}-(N-1)He^\varphi=0 &\text{in }D,\\[1.2ex]
\dfrac{\partial\varphi}{\partial\nu_{g_0}}=0 &\text{on }\partial D.
\end{cases}
\end{equation}
Note that we have assumed $H>0$ because, as shown in Remark \ref{rem:Hpositivenecessary}, problem \eqref{eq:intro-CMC}
can have a solution only for positive $H$. It is easy to see that the constant function
$$\varphi_H:=\log\frac{1}{H}$$
is a solution of \eqref{eq:intro-CMC}, and the corresponding radial graph is the spherical cap
$$
\Gamma_{\varphi_H}=\partial B_{1/H}\cap\Sigma_D,
$$
where $B_\rho$ denotes the ball of radius $\rho>0$ centered at the origin.

A rigidity result obtained in \cite{ChoePark2011}, and in \cite{PacellaTralli2020} with a detailed proof, shows that if $D$ is a
convex domain on $\SN$, then the spherical cap $\Gamma_{\varphi_H}$ corresponds to the only solution of \eqref{eq:intro-CMC} and, actually, it is the only smooth CMC hypersurface intersecting $\partial\Sigma_D$ orthogonally (i.e.\ not only in the class of radial graphs, unless $\partial D$ has some flat parts, in which case half balls should also be considered).

Thus the interest is to study \eqref{eq:intro-CMC} for nonconvex domains $D$.

An important characterization of these CMC hypersurfaces in cones is obtained by considering the relative perimeter $P(E;\Sigma_D)$ of any
set $E\subset\Sigma_D$ of finite relative perimeter. We recall that, for a smooth set $E$, $P(E;\Sigma_D)$ is the Hausdorff measure of the
part of the boundary $\partial E$ which lies inside $\Sigma_D$ (see \cite{RitoreRosales2004} for more details).
The functional $P(E;\Sigma_D)$ can be differentiated, and it is not difficult to prove that the smooth CMC hypersurfaces which intersect
$\partial\Sigma_D$ orthogonally are the relative boundaries of domains $E$ which are critical points of $P(\cdot\,;\Sigma_D)$ under a volume
constraint (see \cite{RitoreRosales2004} and \cite{IPW} for radial graphs).

Exploiting this variational formulation of CMC radial graphs in cones, it is proved in \cite{IPW} that the spherical cap $\Gamma_{\varphi_H}$ is not the only CMC hypersurface intersecting $\partial\Sigma_D$ orthogonally. 
More precisely, denoting by $\lambda_1(D)$ the first nontrivial Neumann eigenvalue of the Laplace--Beltrami operator $-\Delta_{g_0}$ on the domain $D$, by $\mathcal{H}^{N-1}$ the $(N-1)$-dimensional Hausdorff measure, and by $\SN_+$ a half unit sphere, \cite[Theorem 1.3]{IPW} shows that when $\lambda_1(D)<N-1$ and $\mathcal{H}^{N-1}(D)<\mathcal{H}^{N-1}(\SN_+)$ other such CMC hypersurfaces exist in $\Sigma_D$.

This is related to the crucial fact that, when
\begin{equation}\label{eq4:intro}
\lambda_1(D)=N-1,
\end{equation}
then the hypograph
$$\Omega_{\varphi_H}:=\{rp\in\Sigma_D;\, \ 0<r<1/H,\ p\in D\}$$
is a degenerate critical point of the perimeter functional.
This is equivalent to saying that, under condition \eqref{eq4:intro}, the constant function $\varphi_H$ is a degenerate solution of
problem \eqref{eq:intro-CMC}, i.e.\ the linearization of \eqref{eq:intro-CMC} at $\varphi_H$ admits zero as an eigenvalue.

Moreover, the degeneracy of $\varphi_H$ at a domain $D_0$ for which $\lambda_1(D_0)=N-1$ could induce a bifurcation phenomenon. By this we mean that, considering a family of domains $D_\gamma$ parametrized by $\gamma\in(-\varepsilon,\varepsilon)$ such that $\lambda_1(D_0)=N-1$ and $\lambda_1(D_\gamma)$ crosses the value $N-1$ as $\gamma$ varies, a branch of CMC radial graphs could emanate from the constant graph $\Gamma_{\varphi_H}$. Thus it is clear that the number of solutions of \eqref{eq:intro-CMC} and their degeneracy depend on the domain $D$
on the sphere.

In the present paper we study the structure of the solution set of \eqref{eq:intro-CMC} as $D$ varies, with the aim of showing that, for ``generic'' domains $D$ (in a sense that will be made precise), all solutions of \eqref{eq:intro-CMC} are nondegenerate. As a consequence, we obtain that, ``generically'', problem \eqref{eq:intro-CMC} admits only a finite number of solutions for any H\"older bound we may fix.

To state our result we need to describe a suitable setting. Let $Q$ be a proper open set in $\SN$ and fix a domain $D_0\Subset Q$ of class $C^{3,\alpha}$, with $\alpha\in(0,1)$. We consider the set of $C^{3,\alpha}$ diffeomorphisms of $\SN$ onto itself that are the identity outside $Q$, namely
$$
\DiffQ:=\left\{h\in\operatorname{Diff}^{3,\alpha}(\SN);\ h=\Id\text{ on }\SN\setminus Q \right\}.
$$
This set carries a natural Banach manifold structure, which will be
described in Section 2. For $h\in\DiffQ$ we set
$$
D_h:=h(D_0)\Subset Q,
$$
and, for fixed $H>0$, we consider problem \eqref{eq:intro-CMC} in each domain $D_h$, i.e.
\begin{equation}\label{eq:introprobh}
\begin{cases}
-\divg_{g_0}\!\left(\dfrac{\nabla_{g_0}\varphi}{\sqrt{1+|\nabla_{g_0}\varphi|_{g_0}^2}}\right)+\dfrac{N-1}{\sqrt{1+|\nabla_{g_0}\varphi|_{g_0}^2}}
-(N-1)He^\varphi=0 &\text{in }D_h,\\[1.2ex]
\dfrac{\partial\varphi}{\partial\nu_{g_0}}=0&\text{on }\partial D_h.
\end{cases}
\end{equation}

We then consider the open subset $\mathscr A\subset\DiffQ$ defined by
\begin{equation}\label{eq5:intro}
\mathscr A:=\left\{h\in\DiffQ;\ N-1\notin\SpecNeu(-\Delta_{g_0};D_h)\right\},
\end{equation}
where $\SpecNeu(-\Delta_{g_0};D_h)$ denotes the spectrum of the Laplace--Beltrami operator $-\Delta_{g_0}$ on the domain $D_h$ with
homogeneous Neumann boundary conditions.

We recall that a subset $A$ of a topological space $X$ is called \emph{residual} if its complement is a \emph{meager set}, i.e.\ $X\setminus A$ is a countable union of nowhere dense sets $C_n$, $n\in\N$, which means that $\overline{C_n}$ has empty interior. If $X$ is a Baire space, for instance a complete metric space or, as in our case, an open subset of a Banach manifold (see Section 8), every residual set is dense in $X$. We use the word ``generic'' in the following sense: a property is generic in $X$ if it holds on a residual set $A\subset X$.

Now let us recall the definition of a nondegenerate solution of \eqref{eq:introprobh}. Let us define the map
$$\E:C^{2,\alpha}(\overline{D_h})\longrightarrow C^{0,\alpha}(\overline{D_h}), \qquad \E(\varphi):=-\divg_{g_0}\!\left(\frac{\nabla_{g_0}\varphi}{W_\varphi}\right)+\frac{N-1}{W_\varphi}-(N-1)He^\varphi,$$
where
$$W_\varphi:=\sqrt{1+|\nabla_{g_0}\varphi|_{g_0}^2},$$
and consider its derivative at a solution $\varphi$ of \eqref{eq:introprobh}, i.e.\ the linear operator
$$L_{\varphi}v:=D\E(\varphi)[v], \qquad v\in C^{2,\alpha}(\overline{D_h}).$$

This derivative, with the homogeneous Neumann boundary condition, is computed and studied in a more general setting in Section 3, where
it is shown to be a Fredholm operator of index zero. We say that $\varphi$ is a nondegenerate solution of \eqref{eq:introprobh} if $L_{\varphi}$, with this boundary condition, does not admit zero as an eigenvalue.

Our result is the following.

\begin{theorem}\label{thm:main}
Let $N\geq 3$, $\alpha\in (0,1)$ and let $D_0$ be a $C^{3,\alpha}$ domain on $\SN$, with $D_0\Subset Q$, for a fixed proper open set $Q\subset\SN$, and such that $N-1 \notin \SpecNeu(-\Delta_{g_0};D_0)$.
 Let $\mathscr A$ be the set of diffeomorphisms defined in \eqref{eq5:intro} and, for $h\in\mathscr A$, consider the CMC problem
\eqref{eq:introprobh} in $D_h$, for a fixed positive number $H$. Then the set $\mathscr G$ of all diffeomorphisms $h\in\mathscr A$
for which all solutions of \eqref{eq:introprobh} are nondegenerate is a residual subset of $\mathscr A$. Moreover, for every $h\in\mathscr G$ and $R>0$, problem \eqref{eq:introprobh} in $D_h$ has only finitely many solutions $\varphi$ with $\|\varphi\|_{C^{2,\alpha}(\overline{D_h})}\leq R$.
\end{theorem}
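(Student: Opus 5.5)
We follow the classical transversality scheme of Quinn, Saut--Temam and Henry (parametric Sard--Smale). First we pull everything back to the fixed domain $D_0$. For $h\in\mathscr A$ we set $\psi=\varphi\circ h$, with $\varphi\in C^{2,\alpha}(\overline{D_h})$. Then \eqref{eq:introprobh} becomes an equation $\mathcal F(\psi,h)=0$ on $D_0$. It has the pulled-back metric $h^*g_0$ and the Neumann condition with respect to the pulled-back conormal. We define
\[
\mathcal F:\ \mathcal X\times\mathscr A\longrightarrow C^{0,\alpha}(\overline{D_0}),
\]
where $\mathcal X=\{\psi\in C^{2,\alpha}(\overline{D_0});\ \partial_{\nu_{h^*g_0}}\psi=0\}$. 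To keep the domain space fixed, we straighten the boundary condition with a local $h$-dependent linear change of variables; alternatively we let $\mathcal F$ also take values in $C^{1,\alpha}(\partial D_0)$ for the boundary operator. Because $h\in C^{3,\alpha}$, the pulled-back coefficients are $C^{2,\alpha}$, so $\mathcal F$ is $C^1$ (indeed $C^k$ with $k=1$ suffices, since the Fredholm index is $0$). By the results of Section 3, $D_\psi\mathcal F(\psi,h)$ is Fredholm of index zero, being conjugate to $L_\varphi$ with Neumann condition.

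Transversality then gives the first statement, once we show that $0$ is a regular value of $\mathcal F$. That is, at every zero $(\psi,h)$ the full derivative $D\mathcal F(\psi,h)=(D_\psi\mathcal F, D_h\mathcal F)$ must be surjective. Granting this, the Sard--Smale transversality theorem says that $\{h:\ 0 \text{ is a regular value of }\mathcal F(\cdot,h)\}$ is residual. Separability is needed here; we handle it by restricting to $\sigma$-compact pieces, or by using the local form of the theorem via a countable cover. For such $h$ every solution is nondegenerate, since an index-zero surjective operator is injective. For the finiteness claim, fix such $h$ and $R>0$. The solutions with $\|\varphi\|_{C^{2,\alpha}}\le R$ are precompact in $C^{2}$ by Schauder estimates, which we bootstrap to $C^{2,\beta}$ compactness using the equation and then to $C^{2,\alpha}$ bounds. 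Each solution is isolated by the implicit function theorem. So if there were infinitely many, a subsequence would converge to a solution, and that solution would fail to be isolated, a contradiction.

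\textbf{Main step: surjectivity.} Since the range of $D_\psi\mathcal F$ is closed with finite codimension, it suffices to show the following. If $w\in\ker(D_\psi\mathcal F)^*$, equivalently $w$ solves the adjoint (self-adjoint, variational) problem $L_\varphi w=0$ with Neumann condition on $D_h$, and $w$ annihilates $D_h\mathcal F[V]$ for all vector fields $V$ supported in $Q$, then $w=0$. We compute the domain derivative in Hadamard form. Perturbing by $h_t=(\Id+tV)\circ h$ and using that $\varphi$ solves the equation with Neumann data, integration by parts reduces the pairing to a boundary integral. Since both $\varphi$ and $w$ satisfy Neumann conditions, it takes the form
\[
\int_{\partial D_h}\Big(\frac{\nabla\varphi\cdot\nabla w}{W_\varphi}-\frac{(\nabla\varphi\cdot\nabla w)(\nabla\varphi\cdot\nabla\varphi)}{W_\varphi^3}+\frac{(N-1)w}{W_\varphi}-(N-1)He^\varphi w\Big)\,V\cdot\nu\ d\sigma .
\]
The structure here is a tangential-gradient term plus a zero-order term; the exact expression will be checked in the computation. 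Because $V\cdot\nu$ is arbitrary on $\partial D_h\subset Q$, the integrand vanishes on $\partial D_h$. We then have the Cauchy data $\partial_\nu w=0$ together with one further relation on $\partial D_h$. The aim is to turn this into $w=0$ on the boundary and then apply unique continuation for $L_\varphi$, which has $C^{1,\alpha}$ coefficients, to get $w\equiv0$.

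\textbf{Expected obstacle.} The extra relation is nonlocal in the tangential derivatives, so deducing $w=\nabla_T w=0$ is not immediate. This is where the hypothesis $h\in\mathscr A$ should enter. At the constant solution $\varphi_H$ the relation reduces to $(N-1)(H\cdot\tfrac1H-1)w=0$, which is vacuous. There one uses instead that $L_{\varphi_H}=-\Delta-(N-1)$, and $N-1\notin\SpecNeu$ makes $\varphi_H$ automatically nondegenerate. For nonconstant solutions, we plan to exploit the rotational symmetries and the scaling structure of the equation. Concretely, the functions $\nabla\varphi\cdot X$ for Killing fields $X$ and the scaling generator give explicit solutions of $L_\varphi$ with known boundary behavior, and we use them to show that the boundary relation forces $w$ to vanish on an open subset of $\partial D_h$. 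Possibly we also need to enlarge the parameter space, for example with normal perturbations that vary in second order, so that $w|_{\partial D}$ and $\nabla_T w$ are decoupled. We expect this surjectivity step to be the main difficulty.
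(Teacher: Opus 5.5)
Your overall scheme matches the paper's, and so does reducing the transversality pairing to a pointwise boundary relation. The scheme is: pull back to $D_0$, keep the Neumann condition as a second component of the map, use that $D_\psi F$ is Fredholm of index zero, apply the Quinn--Henry transversality theorem, and get finiteness from the implicit function theorem plus compactness. Your Hadamard-form computation plays the role of the paper's tensor $\mathcal T_\psi[w]$. But the step that carries the content of the theorem, surjectivity of $DF$ at every zero, is left open, so the proposal does not prove the statement. Two things must be fixed there.

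First, $L_\psi$ is not symmetric in $L^2(d\mu_{g_0})$; only $e^{(N-1)\psi}L_\psi$ is. So the annihilator of $\operatorname{Range}D_\psi F(\psi,\Id)$ is not $\ker L_\varphi$. It consists of the pairs $(w,w/W_\psi)$ with $L_\psi^*w=0$ in $D_0$ and $\partial_{\nu_{g_0}}w=0$ on $\partial D_0$, and your displayed integrand does not come out right. Done correctly, the condition that $w$ annihilates $D_hF(\psi,\Id)[Y]$ for all $Y\in\Y$ is $\divg_{g_0}\bigl(\tfrac{w}{W_\psi}\nabla_{g_0}\psi\bigr)=0$ on $\partial D_0$. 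Using the equation for $\psi$ and $\partial_{\nu_{g_0}}\psi=\partial_{\nu_{g_0}}w=0$, this reads
\[
\langle\nabla^\top_{g_0} w,\nabla^\top_{g_0}\psi\rangle_{g_0}=(N-1)\bigl(He^\psi W_\psi-1\bigr)\,w\qquad\text{on }\partial D_0 .
\]
Second, and this is the missing idea, this relation is not nonlocal. It is a linear transport ODE for $w$ along the integral curves of $\nabla^\top_{g_0}\psi$ on the compact manifold $\partial D_0$. Along these curves $\psi$ is monotone, and that controls the sign of the coefficient.

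The paper concludes as follows, with $\psi_H=\log(1/H)$.

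\emph{Case $\max_{\partial D_0}\psi>\psi_H$.} Take $p_0$ with $\psi(p_0)>\psi_H$ and follow the forward flow of $\nabla^\top_{g_0}\psi$. Along it $\psi\ge\psi(p_0)$ and $W_\psi\ge1$, so the coefficient is at least $(N-1)(He^{\psi(p_0)}-1)>0$. Hence $|w|$ grows exponentially unless $w(p_0)=0$, and boundedness forces $w=0$ on the open set $\{\psi>\psi_H\}\cap\partial D_0$.

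\emph{Case $\min_{\partial D_0}\psi<\psi_H$.} At boundary minimum points $\nabla_{g_0}\psi=0$ (tangentially by minimality, normally by Neumann), so $W_\psi=1$ and the coefficient is negative there. A small sublevel set around the minimum set is forward invariant under $-\nabla^\top_{g_0}\psi$, and the same growth argument applies.

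In both cases $w=\partial_{\nu_{g_0}}w=0$ on an open piece of $\partial D_0$, and Calder\'on uniqueness plus unique continuation give $w\equiv0$.

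\emph{Case $\psi\equiv\psi_H$ on $\partial D_0$.} The function $\psi-\psi_H$ has zero Cauchy data for a linear elliptic equation, so $\psi\equiv\psi_H$ in $D_0$. Then $w$ is a Neumann eigenfunction with eigenvalue $N-1$. Only here is $h\in\mathscr A$ used, not for nonconstant solutions as you expected. You also omitted solutions that are merely constant on $\partial D_0$, which this case covers.

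Your suggested substitutes do not do this job:
\begin{itemize}
\item The fields $\langle\nabla\varphi,X\rangle$, $X$ Killing, solve $L_\varphi u=0$ but violate the Neumann condition, since $D_h$ is not rotation invariant.
\item The shift $\varphi\mapsto\varphi+c$ changes $H$, so it is not a symmetry at fixed $H$.
\item Modulo $\operatorname{Range}D_\psi F$, the $h$-variation depends only on $Y\cdot\nu_{g_0}$, so no choice of perturbations in $\DiffQ$ yields more than the scalar relation above.
\end{itemize}
A smaller point: Henry's theorem needs $\sigma$-properness of $F^{-1}(0,0)\to\mathscr A$, not separability. The paper gets it from $C^{2,\alpha'}$ Schauder bounds ($\alpha'>\alpha$) that are uniform along sequences $h_n\to h$. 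Your fixed-$h$ compactness argument must be made uniform in $h$ in this sense.
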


Hence, in short, we prove that, generically with respect to varying the domain $D$ on $\SN$, all solutions of \eqref{eq:intro-CMC}
are nondegenerate, and there are only finitely many of them for any $C^{2,\alpha}$ bound we may fix.

Generic questions for PDEs have long been studied, in particular in connection with spectral problems, for which we refer to some papers by Micheletti \cite{Micheletti1972,Micheletti1973} and Uhlenbeck \cite{Uhlenbeck1972,Uhlenbeck1976}, where the beautiful result about the generic simplicity of the eigenvalues of some elliptic operators is proved. These results are obtained by a topological approach which relies on
so-called transversality theorems, which go back to Quinn \cite{Quinn1970} and others. We refer to the book \cite{Henry} for them and to \cite{SautTemam1979} for generic results for semilinear elliptic problems.

To our knowledge, this is the first time that a generic result is obtained for the CMC problem \eqref{eq:intro-CMC}.

\medskip
To prove Theorem \ref{thm:main}, we use a version of an abstract transversality theorem, \cite[Theorem 5.4]{Henry}, which is suitable
for our problem (see Theorem \ref{thm:Henryspecial}). The first step is to pull back problem \eqref{eq:introprobh} to an
equivalent one in the fixed domain $D_0$, endowed with the metric $g_h$, which is the pull-back of the metric $g_0$ by the diffeomorphism $h\in\mathscr A$. Then we consider a map $F$ (see \eqref{eq:defF}) whose zeros correspond to the solutions of \eqref{eq:introprobh}, or equivalently of problem \eqref{eq:prob0} (see Lemma \ref{lem1:sect2}), and apply Theorem \ref{thm:Henryspecial} to $F$.
Note that the map $F$ has two components, $F=(F_1,F_2)$, because, when the metric changes, the normal vector changes, so that the
Neumann boundary condition must be included in the definition of $F$. Thus the core of the proof of Theorem \ref{thm:main} consists in
showing that $F$ satisfies the hypotheses of Theorem \ref{thm:Henryspecial}.

The hardest part is to prove that a transversality condition holds, namely hypothesis (ii) of Theorem \ref{thm:Henryspecial}. This amounts to proving that the total differential of $F$ is surjective at every zero of $F$. This is the statement of Proposition \ref{prop1sect8}, whose proof relies on the nontrivial results of the whole of Section 7. It is here that the crucial hypothesis \eqref{eq5:intro} on the class of diffeomorphisms considered enters. At this stage we cannot claim that the spectral condition \eqref{eq5:intro} is necessary, but we tend to believe that it is so.

Note that, when $N-1$ is a Neumann eigenvalue of the Laplace--Beltrami operator $-\Delta_{g_0}$ on $D$, the constant
function $\varphi_H$ is a degenerate solution of \eqref{eq:intro-CMC}.

The paper is organized as follows. In Section 2 we fix the functional setting in which we work and provide some preliminary results. In
Sections 3 and 4 we compute and study the derivatives of the map $F$ with respect to the function $\psi$ and to the diffeomorphism $h$,
respectively. In Section 5 we study the kernel of the adjoint of the derivative of $F$ with respect to $\psi$, while in Section 6 we prove
that $F$ satisfies a compactness condition called $\sigma$-properness. In Section 7 we prove what is needed to verify the transversality
condition for the map $F$ at its zeros. Finally, in Section 8 we state the transversality theorem we use and prove Theorem \ref{thm:main}.
In the Appendix we collect some useful preliminaries from Riemannian geometry and some technical lemmas.

\subsection*{Notation and conventions}

\begin{itemize}
\item For a Riemannian metric $g$, we use interchangeably the notations
$g(\cdot,\cdot)$ and $\langle\cdot,\cdot\rangle_g$, and $|\cdot|_g$ to denote the associated pointwise norm.

\item In local coordinates $(x^1,\ldots,x^{N-1})$ we denote by
$$
\partial_i:=\frac{\partial}{\partial x^i}
$$
the $i$-th coordinate vector field, and we write $\partial_{ij}$
for the second-order partial derivatives. 

\item We denote by $\nabla$ the Levi-Civita connection of the metric under consideration, which will always be clear from the context, and we set $\nabla_i:=\nabla_{\partial_i}$. We stress that a subscript consisting of a vector field or of an index always refers to the connection, whereas a subscript consisting of a metric, as in $\nabla_g$, refers to the gradient.

\item We denote by $d\mu_g$ the Riemannian volume measure. If $D$ is a domain with $C^1$ boundary, $d\sigma_g$ denotes the hypersurface measure induced by $g$ on $\partial D$.

\item If $D$ is a domain with $C^1$ boundary, $\nu_g$ denotes the outward unit normal to $\partial D$ with respect to $g$, and
$$
\partial_{\nu_g}u:=du(\nu_g)= \langle\nabla_g u,\nu_g\rangle_g.
$$
The tangential gradient of $u$ along $\partial D$ is denoted by
$\nabla_g^\top u$; thus
$$
\nabla_g^\top u=\nabla_g u-\partial_{\nu_g}u\,\nu_g.
$$
\item We use the Einstein summation convention for repeated upper and lower indices.
\end{itemize}

\section{Functional setting and preliminary results}

Throughout the paper, we fix $N\geq 3$ and $\alpha\in(0,1)$, we denote by $\SN$ the unit sphere of $\RN$, by $\langle\cdot,\cdot\rangle$ the Euclidean scalar product of $\RN$, and by $g_0$ the standard metric on $\SN$, which is the restriction of $\langle\cdot,\cdot\rangle$ to each tangent space $T_p\SN$. We also fix a proper open set $Q\subset\SN$ and a domain $D_0\subset\SN$ of class $C^{3,\alpha}$ such that
\begin{equation}\label{eq:standing-assumptions-D0}
D_0\Subset Q,
\end{equation}
that is, $\overline{D_0}$ is a compact subset of $Q$.

Let $p_* \in \SN\setminus Q$ and let 
\beq\label{eq:defsterproj}
\pi:\SN\setminus\{p_*\}\to \R^{N-1}
\eeq
be the stereographic projection from the pole $p_*$.
We fix once and for all the atlas of $\SN$ given by the two stereographic charts $\pi$ and $\pi_-$, from the poles $p_*$ and $-p_*$ respectively, and we define accordingly, in the standard way, the H\"older and Sobolev spaces of functions, vector fields and tensor fields on $\SN$ and on $\overline{D}$, $\partial D$, for every domain $D\Subset Q$ of class $C^{3,\alpha}$. 
Note that, since $\overline{D_0}\subset Q \subset\SN\setminus\{p_*\}$, all local computations on $\overline{D_0}$ can be, and will be, performed in the single chart $\pi$. Moreover, we set
\beq\label{eq:defomega0}
\Omega_0:=\pi(D_0).
\eeq
Since $\overline{D_0}$ is a compact subset of $\SN\setminus\{p_*\}$, $D_0$ is of class $C^{3,\alpha}$ and $\pi$ is smooth, $\Omega_0\subset\R^{N-1}$ is a bounded domain of class $C^{3,\alpha}$.

We consider the closed linear subspace of $C^{3,\alpha}(\SN,T\SN)$ defined by
$$
\Y:= \left\{ Y \in C^{3,\alpha}(\SN,T\SN);\, Y=0\text{ on }\SN\setminus Q\right\}
$$
endowed with the usual $C^{3,\alpha}$ norm, where  $T\SN$ is the tangent bundle of $\SN$. In particular, $\Y$ is a Banach space.

We denote by $\DiffQ$ the set of $C^{3,\alpha}$ diffeomorphisms of $\SN$ onto itself that coincide with the identity on $\SN\setminus Q$, namely
$$
\DiffQ:=\left\{h\in\operatorname{Diff}^{3,\alpha}(\SN);\, h=\Id\text{ on }\SN\setminus Q\right\}.
$$
$\DiffQ$ carries a natural Banach-manifold structure, which we briefly describe here, limiting ourselves to the features needed in the sequel.

For $h\in\DiffQ$, let $h^*T\SN$ denote the pull-back of $T\SN$ by $h$, whose fiber over $p\in\SN$ is $T_{h(p)}\SN$, and set
$$
\Y_h:=\bigl\{V\in C^{3,\alpha}(\SN,h^*T\SN);\ V=0\ \text{on }\SN\setminus Q\bigr\}.
$$
Then $\Y_h$ is a Banach space, and $I_h:\Y\to\Y_h$, $I_h(Y):=Y\circ h$, is a Banach-space isomorphism.

As local parametrizations  we use the exponential map of $g_0$, denoted by $\exp$, where throughout we identify $T_p\SN=\{v\in\RN;\ \langle v,p\rangle=0\}$. By \cite[Proposition 5.27]{LeeRM}, the nonconstant maximal geodesics of $(\SN,g_0)$ are precisely the periodic constant-speed curves whose image is a great circle, and $(\SN,g_0)$ is geodesically complete. In particular $\exp$ is defined on the
whole of $T\SN$. For $v\in T_p\SN\setminus\{0\}$ the great circle through $p$ tangent to $v$ is $\SN\cap\operatorname{span}\{p,v/|v|\}$, and its constant-speed parametrization with initial point $p$ and initial velocity $v$ is $t\mapsto\cos(|v|t)\,p+\sin(|v|t)\,v/|v|$. Evaluating at $t=1$, we obtain
\beq\label{eq:expexplicit}
\exp_pv=\cos|v|\;p+\frac{\sin|v|}{|v|}\,v, \qquad p\in\SN,\ v\in T_p\SN,
\eeq
with the convention $\sin|v|/|v|=1$ for $v=0$. Since $t\mapsto\cos t$ and $t\mapsto\sin t/t$ are entire and even, the right-hand side of
\eqref{eq:expexplicit} is a linear combination of $p$ and $v$ whose coefficients are entire functions of $|v|^2$. In particular it defines a $C^\infty$ map $E:\RN\times\RN\to\RN$ whose restriction to $T\SN$ takes values in $\SN$ and coincides with $\exp$. Moreover, differentiating \eqref{eq:expexplicit} along $t\mapsto tv$ at $t=0$ we obtain the well known property
\beq\label{eq:dexp0}
d(\exp_p)_0=\mathrm{id}_{T_p\SN},\qquad p\in\SN .
\eeq
For $V\in\Y_h$ we then set
\beq\label{defThetahV}
\Theta_h(V)(p):=\exp_{h(p)}\bigl(V(p)\bigr)=E\bigl(h(p),V(p)\bigr),\qquad p\in\SN,
\eeq
so that $\Theta_h(0)=h$. Since $E$ is smooth and $h,V$ are of class
$C^{3,\alpha}$, the map $\Theta_h(V):\SN\to\SN$ is of class $C^{3,\alpha}$ for
every $V\in\Y_h$. Since $E$ is smooth, $\Theta_h$ is continuous from $\Y_h$ into $C^1(\SN,\SN)$, endowed with the $C^1$ topology. Then, as the set of $C^1$ diffeomorphisms of $\SN$ is open in $C^1(\SN,\SN)$ (see \cite[Chapter 2, Theorem 1.7]{Hirsch}), and $\Theta_h(0)=h$, there is an open neighborhood $\mathcal O_h$ of $0$ in $\Y_h$ such that $\Theta_h(V)$ is a $C^1$ diffeomorphism of $\SN$ for every $V\in\mathcal O_h$. 

Since $\Theta_h(V)$ is a $C^{3,\alpha}$ bijection with everywhere invertible differential, the inverse function theorem gives $(\Theta_h(V))^{-1}\in C^{3,\alpha}$, so that $\Theta_h(V)\in\operatorname{Diff}^{3,\alpha}(\SN)$. 
Shrinking $\mathcal O_h$ if necessary, we may further assume that $\max_{\SN}|V|_{g_0}<\pi$ for every $V\in\mathcal O_h$. 
By \eqref{eq:expexplicit} we have $\langle p,\exp_pv\rangle=\cos|v|$, so that if $\exp_pv=\exp_pw$ with $|v|,|w|<\pi$ then $|v|=|w|$, and hence $v=w$ by \eqref{eq:expexplicit} again, since $\sin r/r\neq0$ for $r\in[0,\pi)$. 
Thus each $\exp_p$ is injective on $\{v\in T_p\SN;\ |v|<\pi\}$, and it follows that $\Theta_h$ is one-to-one on $\mathcal O_h$.

Furthermore, the constraint defining $\DiffQ$ is preserved by $\Theta_h$: if $p\in\SN\setminus Q$, then $h(p)=p$ and
$V(p)=0$, so that $\Theta_h(V)(p)=\exp_p(0)=p$. 

Hence $\Theta_h$ maps
$\mathcal O_h$ bijectively onto a subset $\mathcal U_h:=\Theta_h(\mathcal O_h)$ of $\DiffQ$ containing $h$, and
$$\DiffQ=\bigcup_{h\in\DiffQ}\mathcal U_h.$$
Setting
$$
\phi_h:=I_h^{-1}\circ\Theta_h^{-1}: \mathcal U_h\longrightarrow I_h^{-1}(\mathcal O_h)\subset\Y,
$$
we can check that the pairs $(\mathcal U_h,\phi_h)$ define on $\DiffQ$ a structure of $C^1$ Banach manifold modeled on $\Y$. 
We omit the details of the proof, but we only observe that the $C^1$ regularity of the transition maps can be proved directly since the inverse $\Theta_h^{-1}$ can also be computed explicitly (actually, for the analogous structure on $C^k(\SN,\SN)$, these transition maps are smooth, see \cite[Theorem 4.4]{Wittmann}, however higher regularity is not needed here). Let us also note that for $h=\Id$ we have $\phi_{\Id}=\Theta_{\Id}^{-1}$.

Since $I_h$ is a Banach-space isomorphism, in the computations it is convenient to work with the parametrization $\Theta_h$ itself, thereby identifying
\beq\label{eq:tangent_space_DiffQ}
T_h\DiffQ=\Y_h=\bigl\{Y\circ h;\ Y\in\Y\bigr\}, \qquad\text{so that}\qquad T_{\Id}\DiffQ=\Y .
\eeq
This is the identification used throughout. Finally, let $t\mapsto V_t$ be a differentiable curve in $\Y_h$ with $V_0=0$, set
$\dot V_0:=\frac{d}{dt}\big|_{t=0}V_t$ and $h_t:=\Theta_h(V_t)$, which is well defined for $|t|$ small. 
Then $t\mapsto h_t$ is a differentiable curve in $\DiffQ$ with $h_0=h$ and $\frac{d}{dt}\big|_{t=0}h_t=\dot V_0$, which can also be computed pointwise: by \eqref{eq:dexp0}, for every $p\in\SN$,
\beq\label{eq:pointwisetangent}
\left.\frac{d}{dt}\right|_{t=0}h_t(p) =\left.\frac{d}{dt}\right|_{t=0}\exp_{h(p)}\bigl(V_t(p)\bigr)=d\bigl(\exp_{h(p)}\bigr)_{0}\bigl[\dot V_0(p)\bigr]
=\dot V_0(p).
\eeq
\medskip

For $h\in\DiffQ$ we set
$$D_h:=h(D_0), \qquad g_h:=h^*g_0,$$
where $h^*g_0$ is the pull-back of $g_0$ by $h$ (see \eqref{eq:defpullbackmetric} for the definition).
Since $h=\Id$ on $\SN\setminus Q$, we have $h(Q)=Q$ and hence $D_h\Subset Q$.

Let $H\in\R$ and consider, on $D_h$, the problem
\begin{equation}\label{eq:probh}
\begin{cases}
-\divg_{g_0}\!\left(\dfrac{\nabla_{g_0}\varphi}{\sqrt{1+|\nabla_{g_0}\varphi|_{g_0}^2}}\right)+\dfrac{N-1}{\sqrt{1+|\nabla_{g_0}\varphi|_{g_0}^2}}
-(N-1)He^\varphi=0&\text{in }D_h,\\[1.2ex]
\dfrac{\partial\varphi}{\partial\nu_{g_0}}=0&\text{on }\partial D_h.
\end{cases}
\end{equation}
\begin{remark}\label{rem:Hpositivenecessary}
Note that for any proper domain $D$ of $\SN$ of class $C^1$, a necessary condition for the solvability of the equation \eqref{eq:intro-CMC} (and thus in particular of \eqref{eq:probh}) is that
$$
H>0.
$$
Indeed, if $\varphi \in C^2(\overline D)$ is a solution of \eqref{eq:intro-CMC}, setting
$$W_\varphi:=\sqrt{1+|\nabla_{g_0}\varphi|_{g_0}^2},$$ 
integrating the equation over $D$ and applying the divergence theorem, we obtain
$$
-\int_{D}\divg_{g_0}\!\left(\frac{\nabla_{g_0}\varphi}{W_\varphi}\right)\,d\mu_{g_0}+(N-1)\int_{D}\frac{1}{W_\varphi}\,d\mu_{g_0}
-(N-1)H\int_{D}e^\varphi\,d\mu_{g_0}=0.
$$
Since
$$
\int_{D}\divg_{g_0}\!\left(\frac{\nabla_{g_0}\varphi}{W_\varphi}\right)\,d\mu_{g_0}=\int_{\partial D}\frac{\partial_{\nu_{g_0}}\varphi}{W_\varphi}\,d\sigma_{g_0}=0,$$
it follows that
$$
\int_{D}\frac{1}{W_\varphi}\,d\mu_{g_0}=H\int_{D}e^\varphi\,d\mu_{g_0}.$$
Now $W_\varphi\ge 1$, hence $W_\varphi^{-1}>0$ in $D$, and clearly $e^\varphi>0$ in $D$. Therefore both integrals are strictly positive, and we conclude that $H$ is positive.
\end{remark}

The following lemma allows us to pull back problem
\eqref{eq:probh} from the moving domain $D_h$ to the fixed domain
$D_0$, and is therefore central to our strategy.
\begin{lemma}\label{lem1:sect2}
Let $h\in\DiffQ$ and $\varphi\in C^2(\overline{D_h})$, and set $\psi:=\varphi\circ h$. Then $\varphi$ solves \eqref{eq:probh} if and only if $\psi$ solves
\begin{equation}\label{eq:prob0}
\begin{cases}
-\divg_{g_h}\!\left(\dfrac{\nabla_{g_h}\psi}{\sqrt{1+|\nabla_{g_h}\psi|_{g_h}^{2}}}\right) +\dfrac{N-1}{\sqrt{1+|\nabla_{g_h}\psi|_{g_h}^{2}}}
-(N-1)He^\psi=0&\text{in }D_0,\\[1.2ex]
\dfrac{\partial\psi}{\partial\nu_{g_h}}=0&\text{on }\partial D_0.
\end{cases}
\end{equation}
\end{lemma}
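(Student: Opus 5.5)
The plan is to use the fact that $h$, restricted to $D_0$, is an isometry from $(D_0,g_h)$ onto $(D_h,g_0)$. Indeed $g_h=h^*g_0$, and by construction $h(D_0)=D_h$ and $h(\partial D_0)=\partial D_h$, with $h\in C^{3,\alpha}$. Every quantity appearing in \eqref{eq:probh} is built intrinsically from the metric and the function. These quantities are the gradient, the pointwise norm, the divergence of a vector field, and the outward unit normal. So each of them transforms naturally under isometries, and the equivalence should follow by pulling each term back through $h$.

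\textbf{Step 1: gradient and norm.} For $\psi=\varphi\circ h$, I will check that $dh\,(\nabla_{g_h}\psi)=\nabla_{g_0}\varphi\circ h$. This follows from $d\psi=h^*d\varphi$ together with the defining relation $g_h(X,Z)=g_0(dh\,X,dh\,Z)$. Consequently $|\nabla_{g_h}\psi|_{g_h}=|\nabla_{g_0}\varphi|_{g_0}\circ h$, and hence $W$ computed with $g_h$ equals $W_\varphi\circ h$. In particular, the vector field $X_\psi:=\nabla_{g_h}\psi/\sqrt{1+|\nabla_{g_h}\psi|^2_{g_h}}$ is $h$-related to $X_\varphi:=\nabla_{g_0}\varphi/W_\varphi$. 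That is, $X_\psi=h^*X_\varphi$.

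\textbf{Step 2: divergence.} I will show that $\divg_{g_h}(h^*X)=(\divg_{g_0}X)\circ h$ for $C^1$ vector fields $X$. One route uses the Levi-Civita connection: the connection of $h^*g_0$ is the pullback of the connection of $g_0$, and the divergence is the trace of $\nabla X$. A second route is weak. By the change of variables formula with $h_*d\mu_{g_h}=d\mu_{g_0}$, we have
\[
\int_{D_0}\langle h^*X,\nabla_{g_h}(\eta\circ h)\rangle_{g_h}\,d\mu_{g_h}=\int_{D_h}\langle X,\nabla_{g_0}\eta\rangle_{g_0}\,d\mu_{g_0}
\]
for all test functions $\eta$. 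Since $h^*X$ is only $C^1$ (here $h\in C^{3,\alpha}$ and $\varphi\in C^2$), I must check the regularity: the metric $g_h$ is $C^{2,\alpha}$, so its Christoffel symbols are $C^{1,\alpha}$ and the divergence is well defined. The same holds with the chart-based formula $\divg_g X=|g|^{-1/2}\partial_i(|g|^{1/2}X^i)$. Combined with $e^\psi=e^\varphi\circ h$, this gives the interior identity $\E_{g_h}(\psi)=\E(\varphi)\circ h$ on $D_0$.

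\textbf{Step 3: boundary condition.} I expect the main point requiring care to be the boundary term. Here I must show that $dh(\nu_{g_h})=\nu_{g_0}\circ h$ on $\partial D_0$, including the orientation. The map $dh$ sends $T\partial D_0$ onto $T\partial D_h$ and is a $g_h$-to-$g_0$ isometry, so $dh(\nu_{g_h})$ is a unit vector orthogonal to $\partial D_h$. It points outward because $h$ maps $D_0$ onto $D_h$: if a curve leaves $\overline{D_0}$, then its image leaves $\overline{D_h}$. From this it follows that $\partial_{\nu_{g_h}}\psi=d\varphi(dh\,\nu_{g_h})=(\partial_{\nu_{g_0}}\varphi)\circ h$. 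Since $h$ is a bijection $\overline{D_0}\to\overline{D_h}$, both the equation and the boundary condition hold for $\varphi$ if and only if they hold for $\psi$. This proves the equivalence; note that $\varphi\in C^2(\overline{D_h})$ if and only if $\psi\in C^2(\overline{D_0})$.
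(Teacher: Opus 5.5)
Your proposal is correct and follows essentially the same route as the paper. Both arguments use that $h:(D_0,g_h)\to(D_h,g_0)$ is an isometry to transport the gradient, its norm, the divergence (via $(\divg_{g_0}Z)\circ h=\divg_{g_h}(h^*Z)$ with $Z=\nabla_{g_0}\varphi/W_\varphi$) and the outward normal (via $dh(\nu_{g_h})=\nu_{g_0}\circ h$). You additionally spell out details the paper leaves implicit, namely the orientation of $dh(\nu_{g_h})$ and the $C^1$ regularity needed for the divergence identity.
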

\begin{proof}
By the definition of $g_h$, the map
$$
h:(D_0,g_h)\longrightarrow(D_h,g_0)
$$
is an isometry. In particular, we have
$$
dh(\nu_{g_h})=\nu_{g_0}\circ h \qquad\text{on }\partial D_0,
$$
and hence
$$
\partial_{\nu_{g_h}}(\varphi\circ h)=(\partial_{\nu_{g_0}}\varphi)\circ h,
$$
from which we readily deduce that $\partial_{\nu_{g_0}}\varphi=0$ on $\partial D_h$ if and only if $\partial_{\nu_{g_h}}\psi=0$ on $\partial D_0$.

Moreover, denoting by $h^*Z:=(dh)^{-1}\bigl(Z\circ h\bigr)$ the pull-back by $h$ of a vector field $Z$ on $D_h$, and as $h$ is an isometry, we check that
\beq\label{eq1:addlem2:2}
\nabla_{g_h}(\varphi\circ h)=h^*\bigl(\nabla_{g_0}\varphi\bigr),\qquad
\bigl|\nabla_{g_h}(\varphi\circ h)\bigr|_{g_h}=|\nabla_{g_0}\varphi|_{g_0}\circ h,
\eeq
and
\beq\label{eq2:addlem2:2}
(\divg_{g_0}Z)\circ h=\divg_{g_h}(h^*Z).
\eeq
Then, applying \eqref{eq2:addlem2:2} with
$Z=\nabla_{g_0}\varphi/\sqrt{1+|\nabla_{g_0}\varphi|_{g_0}^2}$, and exploiting \eqref{eq1:addlem2:2}, we readily infer that $\varphi$ solves \eqref{eq:probh} if and only if
$\psi$ solves \eqref{eq:prob0}. The proof is complete.
\end{proof}

Let $\Param$ be an open neighborhood of $\Id$ in $\DiffQ$. For $(\psi,h)\in C^{2,\alpha}(\overline{D_0})\times\Param$ let
\begin{equation}\label{eq:notationsimplification}
W_{\psi,h}:=\sqrt{1+|\nabla_{g_h}\psi|_{g_h}^2},
\end{equation}
and define the map
\begin{equation}\label{eq:defF}
F:C^{2,\alpha}(\overline{D_0})\times\Param\longrightarrow C^{0,\alpha}(\overline{D_0})
\times C^{1,\alpha}(\partial D_0),
\qquad
F(\psi,h):=(F_1(\psi,h),F_2(\psi,h)),
\end{equation}
where
\begin{eqnarray*}
F_1(\psi,h) &:=&-\divg_{g_h} \left(\frac{\nabla_{g_h}\psi}{W_{\psi,h}}\right)+\frac{N-1}{W_{\psi,h}}-(N-1)He^\psi,\\
F_2(\psi,h)&:=&\partial_{\nu_{g_h}}\psi.
\end{eqnarray*}
By Lemma \ref{lem1:sect2}, $F(\psi,h)=(0,0)$ if and only if
$\varphi=\psi\circ h^{-1}$ solves \eqref{eq:probh}. We first establish
the regularity of $F$.

\begin{proposition}\label{prop:FC1}
The map $F:C^{2,\alpha}(\overline{D_0})\times\Param\to C^{0,\alpha}(\overline{D_0})
\times C^{1,\alpha}(\partial D_0)$ is of class $C^1$.
\end{proposition}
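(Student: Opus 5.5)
The plan is to work in the single stereographic chart $\pi$ on $\overline{D_0}$ and in the chart $\phi_{\Id}=\Theta_{\Id}^{-1}$ of $\DiffQ$ around $\Id$, shrinking $\Param$ if needed so that $\Param\subset\mathcal U_{\Id}$. In these coordinates $h=\Theta_{\Id}(Y)$ with $Y$ ranging in an open set of the Banach space $\Y$. We then show that $(\psi,Y)\mapsto F(\psi,\Theta_{\Id}(Y))$ is $C^1$. Because the transition maps are $C^1$, this is equivalent to the statement. The proof factors $F$ as a composition of maps, each of which is smooth or $C^1$ between the relevant H\"older spaces.

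\emph{Step 1: the metric coefficients.} We show that
\[
Y\mapsto (g_h)_{ij}\in C^{2,\alpha}(\overline{\Omega_0})
\]
is $C^1$, indeed smooth. In the chart, $h=E(p,Y(p))$ with $E$ smooth. Hence
\[
(g_h)_{ij}(x)=\bigl\langle \partial_i(h\circ\pi^{-1})(x),\partial_j(h\circ\pi^{-1})(x)\bigr\rangle .
\]
This is a smooth (polynomial, after composing with $E$) expression in $\pi^{-1}$, $Y\circ\pi^{-1}$ and $\partial_i(Y\circ\pi^{-1})$. The key tool is the standard fact that a Nemytskii (superposition) operator $u\mapsto G(x,u)$, with $G$ smooth, is $C^\infty$ from $C^{k,\alpha}$ to $C^{k,\alpha}$. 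Since $Y\in C^{3,\alpha}$ and its first derivatives lie in $C^{2,\alpha}$, we get $g_{ij}\in C^{2,\alpha}$, depending smoothly on $Y$. Since $\det g_h$ stays bounded away from zero near $\Id$, the maps to $g^{ij}$ and $\sqrt{\det g}$ are also smooth into $C^{2,\alpha}$. The Christoffel symbols then land in $C^{1,\alpha}$.

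\emph{Step 2: $F_1$.} We write
\[
F_1=-\frac{1}{\sqrt{\det g}}\,\partial_i\!\Bigl(\sqrt{\det g}\,\frac{g^{ij}\partial_j\psi}{W}\Bigr)+\frac{N-1}{W}-(N-1)He^\psi,
\qquad W=\sqrt{1+g^{ij}\partial_i\psi\,\partial_j\psi}.
\]
The quantities $\partial_j\psi$ lie in $C^{1,\alpha}$ and depend linearly on $\psi$, while $g^{ij}\in C^{2,\alpha}\subset C^{1,\alpha}$. Products in $C^{1,\alpha}$ are continuous bilinear, and $t\mapsto (1+t)^{-1/2}$ and $\exp$ are smooth on the relevant range. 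So the vector field inside the divergence depends smoothly on $(\psi,Y)$ with values in $C^{1,\alpha}$. Applying $\partial_i$ maps $C^{1,\alpha}\to C^{0,\alpha}$ continuously and linearly, and multiplying by $1/\sqrt{\det g}\in C^{2,\alpha}$ is continuous. The term $e^\psi$ is a smooth Nemytskii map on $C^{2,\alpha}$. Therefore $F_1$ is $C^1$ (in fact smooth) into $C^{0,\alpha}(\overline{D_0})$.

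\emph{Step 3: $F_2$.} This step is the main obstacle, since it requires care with the boundary regularity. Let $n$ be the Euclidean conormal in the chart, i.e.\ a fixed $C^{2,\alpha}$ covector field on $\partial\Omega_0$ annihilating $T\partial\Omega_0$; this is possible because $\partial\Omega_0$ is $C^{3,\alpha}$. Then
\[
\nu_{g_h}^i=\frac{g^{ij}n_j}{\sqrt{g^{kl}n_kn_l}}, \qquad F_2=\nu^i_{g_h}\,\partial_i\psi\big|_{\partial D_0}.
\]
The traces $g^{ij}|_{\partial\Omega_0}$ lie in $C^{2,\alpha}(\partial\Omega_0)$, and $\partial_i\psi|_{\partial\Omega_0}\in C^{1,\alpha}(\partial\Omega_0)$ depends linearly and continuously on $\psi$. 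Restriction to the boundary is a bounded linear map. With the same product and Nemytskii arguments, the denominator $g^{kl}n_kn_l$ is positive and depends smoothly on $Y$. Hence $F_2$ is smooth into $C^{1,\alpha}(\partial D_0)$. The only point needing verification is that the $C^{k,\alpha}$ spaces on $\partial D_0$, defined through the fixed atlas, behave like this under restriction and multiplication, which is standard.

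Combining Steps 1--3 shows that $F$ is of class $C^1$ on $C^{2,\alpha}(\overline{D_0})\times\Param$.
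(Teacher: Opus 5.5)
Your Steps 2 and 3 are essentially the paper's argument: H\"older Banach-algebra and composition estimates in the chart $\pi$, $C^1$ dependence of $\det g_h$ and $g_h^{-1}$, the coordinate formulas for gradient and divergence, and a conormal for $\nu_{g_h}$ (the paper uses $d\rho$ for a $C^{3,\alpha}$ defining function $\rho$).

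The gap is at the very start. You may not shrink $\Param$ so that $\Param\subset\mathcal U_{\Id}$. The proposition is stated for the given open neighborhood $\Param$ of $\Id$. In Section 8 it is used with $\Param=\mathscr A$, which in general is not contained in the single chart domain $\mathcal U_{\Id}$, and the transversality theorem needs $F$ to be $C^1$ on all of $C^{2,\alpha}(\overline{D_0})\times\mathscr A$. As written, you only prove $C^1$ regularity on $C^{2,\alpha}(\overline{D_0})\times(\Param\cap\mathcal U_{\Id})$. Since being $C^1$ is a local property, you must check it near every $\bar h\in\Param$, in the chart $\Theta_{\bar h}$, where $h=E(\bar h,V)$ with $V\in\mathcal O_{\bar h}\subset\Y_{\bar h}$. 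This is exactly how the paper proceeds.

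The repair is routine, but it changes your Step 1. For $\bar h\neq\Id$ the base point $\bar h$ is only $C^{3,\alpha}$, so the relevant derivative is
$\partial_ih=D_qE(\bar h,V)\,\partial_i\bar h+D_vE(\bar h,V)\,\partial_iV$.
Hence $(g_h)_{ij}=\langle\partial_ih,\partial_jh\rangle$ is no longer a smooth function of $(x,V,\partial V)$, and the superposition fact you quote (for $G(x,u)$ smooth in $x$) does not apply as stated. There are two ways to fix this.
\begin{itemize}
\item Treat $\bar h$ and $\partial_i\bar h$ as additional fixed $C^{2,\alpha}$ arguments of the smooth map $E$ and its derivatives.
\item Argue as the paper does: $E(q,v)$ is a combination of $q$ and $v$ with coefficients entire in $|v|^2$, and $C^{3,\alpha}(\overline{\Omega_0})$ is a Banach algebra. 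Hence $V\mapsto h$ is $C^1$ into $C^{3,\alpha}$, and so $V\mapsto(g_h)_{ij}$ is $C^1$ into $C^{2,\alpha}$.
\end{itemize}
The lower bound on $\det g_h$ is then needed only locally near $\bar h$, and your Steps 2 and 3 go through verbatim.

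Alternatively, you could transport your near-$\Id$ result to a neighborhood of $\bar h$ via the conjugation identity $F(u\circ\bar h,k\circ\bar h)=\mathcal P_{\bar h}G(u,k)$ of Section 8. For this, note that $k\mapsto k\circ\bar h$ is represented by the identity of $\Y$ in the charts $\phi_{\Id}$ and $\phi_{\bar h}$, on suitable chart domains. Either way, some such argument must be supplied.
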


\begin{proof}
We first show that the map $h\mapsto g_h$, from $\Param$ into $C^{2,\alpha}(\overline{D_0},T^*\SN\otimes T^*\SN)$, is of class $C^1$. We work in the chart $\pi$ (see \eqref{eq:defsterproj}) and define $\Omega_0$ as in \eqref{eq:defomega0}. 
In the corresponding local coordinates $x=(x^1,\ldots,x^{N-1})$ the components of the pull-back metric $g_h=h^*g_0$ are
\begin{equation}\label{eq:gh-extrinsic}
(g_h)_{ij}(x)=\bigl\langle\partial_ih(x),\,\partial_jh(x)\bigr\rangle,
\end{equation}
where $\partial_ih(x)$ stands for $\frac{\partial}{\partial x^i}(h\circ\pi^{-1})(x)$,
regarded as a vector of $\RN$.

Let us fix $\bar h\in\Param$, let $h\in\mathcal U_{\bar h}\cap\Param$ and let $V\in\mathcal O_{\bar h}\subset\Y_{\bar h}$ be its representative under the parametrization $\Theta_{\bar h}$ (see \eqref{defThetahV}), so that $h=E(\bar h,V)$ on $\overline{\Omega_0}$. By \eqref{eq:expexplicit}, $E(q,v)$ is a linear combination of $q$ and $v$ whose coefficients are entire functions of $|v|^2$.
Since $C^{3,\alpha}(\overline{\Omega_0})$ is a Banach algebra and $\bar h\in C^{3,\alpha}$, it follows that $V\mapsto h$ is of class $C^1$ from
$\mathcal O_{\bar h}$ into $C^{3,\alpha}(\overline{\Omega_0},\RN)$.

Consequently $V\mapsto\partial_ih$ is of class $C^1$ with values in $C^{2,\alpha}(\overline{\Omega_0},\RN)$ for every $i=1,\ldots,N-1$ and, using again that $C^{2,\alpha}(\overline{\Omega_0})$ is a Banach algebra, \eqref{eq:gh-extrinsic} yields that $V\mapsto(g_h)_{ij}$ is of class $C^1$ as well. Since $\bar h\in\Param$ is arbitrary, we conclude that $h\mapsto g_h$ is of class $C^1$ from $\Param$ into $C^{2,\alpha}(\overline{D_0},T^*\SN\otimes T^*\SN)$.

Since $\overline{D_0}$ is compact and $g_h$ is a Riemannian metric, then $\det g_h$ is bounded away from zero, and, as the determinant and the adjoint matrix, denoted respectively by $\det g_h$ and $\operatorname{adj}g_h$, depend polynomially on $g_h$, we infer that
$$
h\longmapsto\det g_h, \qquad h\longmapsto g_h^{-1}=\frac{\operatorname{adj}g_h}{\det g_h}
$$
are of class $C^1$ from $\Param$ into $C^{2,\alpha}(\overline{D_0})$, $C^{2,\alpha}(\overline{D_0},T\SN\otimes T\SN)$, respectively. Now, by the expression of the gradient and the divergence in local coordinates (see \eqref{eq1bis:appendix1} and \eqref{eq:divlocalcoordinates}) we have
$$
\nabla_{g_h}\psi=g_h^{ij}\,\partial_j\psi\,\partial_i, \quad \divg_{g_h}Z=\partial_iZ^i+\frac{\partial_i(\det g_h)}{2\det g_h}\,Z^i .
$$
Then, using again that $C^{k,\alpha}$ is a Banach algebra, we conclude that
$$
(\psi,h)\longmapsto\nabla_{g_h}\psi,
\qquad
(Z,h)\longmapsto\divg_{g_h}Z
$$
are of class $C^1$, from $C^{2,\alpha}(\overline{D_0})\times\Param$ to $C^{1,\alpha}(\overline{D_0},T\SN)$, and from
$C^{1,\alpha}(\overline{D_0},T\SN)\times\Param$ to $C^{0,\alpha}(\overline{D_0})$, respectively. Moreover,
since $1+|\nabla_{g_h}\psi|_{g_h}^2\ge1$ and $t\mapsto(1+t)^{-1/2}$ is smooth on
$[0,\infty)$, also
$$
(\psi,h)\longmapsto
W_{\psi,h}^{-1}=\bigl(1+|\nabla_{g_h}\psi|_{g_h}^2\bigr)^{-1/2},
$$
is of class $C^1$. Hence
$$(\psi,h)\longmapsto\frac{\nabla_{g_h}\psi}{W_{\psi,h}}$$
is of class $C^1$ from $C^{2,\alpha}(\overline{D_0})\times\Param$ to $C^{1,\alpha}(\overline{D_0},T\SN)$.
Summing up, taking into account that $\psi\mapsto e^{\psi}$ is of class $C^1$ from $C^{2,\alpha}(\overline{D_0})$ into
$C^{0,\alpha}(\overline{D_0})$, we conclude that
$$
F_1:C^{2,\alpha}(\overline{D_0})\times\Param\longrightarrow
C^{0,\alpha}(\overline{D_0})
$$
is of class $C^1$.

We now prove the regularity of $F_2$. Since $\partial D_0$ is of class
$C^{3,\alpha}$, we may fix a defining function $\rho\in C^{3,\alpha}(\SN)$ for
$D_0$, that is,
\beq\label{eq:deffunctrho}
D_0=\{p\in\SN;\,\rho(p)>0\}, \qquad\partial D_0=\{p\in\SN;\,\rho(p)=0\}\quad\text{and }\ d\rho\neq0 \qquad\text{on }\partial D_0 .
\eeq
With this sign convention, the outward unit normal is
$$
\nu_{g_h}=-\frac{\nabla_{g_h}\rho}{|\nabla_{g_h}\rho|_{g_h}} \qquad\text{on }\partial D_0 .
$$
As above, $h\mapsto q_h:=g_h^{ij}\,\partial_i\rho\,\partial_j\rho$ is of class $C^1$ with values in $C^{2,\alpha}(\partial D_0)$. Moreover $q_h$ is bounded away from zero, since $g_h^{-1}$ is positive definite and $d\rho\neq0$ on the compact set $\partial D_0$. 
As $t\mapsto t^{-1/2}$ is smooth on $(0,\infty)$, it follows that
$$
h\longmapsto\nu_{g_h}
$$
is of class $C^1$, with values in $C^{2,\alpha}(\partial D_0,T\SN)$. Finally, since
$$
F_2(\psi,h)=d\psi\bigl(\nu_{g_h}\bigr)
\qquad\text{on }\partial D_0,
$$
and $\psi\mapsto d\psi|_{\partial D_0}$ is a bounded linear operator, we conclude that $F_2$
is of class $C^1$ from $C^{2,\alpha}(\overline{D_0})\times\Param$ to $C^{1,\alpha}(\partial D_0)$. The proof is complete.
\end{proof}

\section{Derivative with respect to $\psi$  and Fredholm property}
In this section we compute the partial derivative of the map $F=(F_1, F_2)$ defined in \eqref{eq:defF} with respect to $\psi$ and prove that $D_\psi F$ is a Fredholm operator of index zero.
\begin{lemma}\label{lem:DFpsi}
For any $(\psi,h) \in C^{2,\alpha}(\overline{D_0})\times \Param$ it holds
\begin{equation}\label{eq:derF1psi}
\begin{array}{lll}
\displaystyle D_\psi F_1(\psi,h)[v]&=&\displaystyle
-\divg_{g_h}\left(\frac{\nabla_{g_h}v}{\sqrt{1+|\nabla_{g_h}\psi|_{g_h}^2}}
-\frac{\nabla_{g_h}\psi\,\langle \nabla_{g_h}\psi,\nabla_{g_h}v\rangle_{g_h}}{(1+|\nabla_{g_h}\psi|_{g_h}^2)^{3/2}}\right)\\[21pt]
&&\displaystyle-\frac{N-1}{(1+|\nabla_{g_h}\psi|_{g_h}^2)^{3/2}}\langle \nabla_{g_h}\psi,\nabla_{g_h}v\rangle_{g_h} -(N-1)He^{\psi}v
\end{array}
\end{equation}
\beq\label{eq:derF2psi}
D_\psi F_2(\psi,h)[v]= \frac{\partial v}{\partial \nu_{g_h}}, \quad\quad\quad\quad\quad\quad\quad\quad\quad\quad\quad\quad\quad\quad\quad\quad\quad\quad\quad
\eeq
for any $v\in C^{2,\alpha}(\overline{D_0})$, where $g_h=h^*g_0$ is the pull-back of $g_0$ by $h$.
\end{lemma}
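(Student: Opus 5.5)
Since $h$ is fixed here, the metric $g_h$, its inverse, the divergence operator $\divg_{g_h}$ and the normal $\nu_{g_h}$ do not depend on $\psi$. By Proposition \ref{prop:FC1} we already know that $F$ is $C^1$. The Fréchet derivative $D_\psi F(\psi,h)[v]$ therefore coincides with the Gateaux derivative $\frac{d}{dt}\big|_{t=0}F(\psi+tv,h)$, and it suffices to compute the latter pointwise. I would split $F_1$ into its three terms, differentiate each along the line $t\mapsto\psi+tv$, and then treat $F_2$ separately.

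\textbf{First, the quantity $W_{\psi+tv,h}$.} Since $\nabla_{g_h}$ is linear, we have
$$|\nabla_{g_h}(\psi+tv)|_{g_h}^2=|\nabla_{g_h}\psi|_{g_h}^2+2t\langle\nabla_{g_h}\psi,\nabla_{g_h}v\rangle_{g_h}+t^2|\nabla_{g_h}v|_{g_h}^2 .$$
Hence
$$\frac{d}{dt}\Big|_{t=0}W_{\psi+tv,h}^{-1}=-\frac{\langle\nabla_{g_h}\psi,\nabla_{g_h}v\rangle_{g_h}}{W_{\psi,h}^{3}} .$$

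\textbf{Next, the three terms of $F_1$.} By the product rule, the vector field $\nabla_{g_h}(\psi+tv)/W_{\psi+tv,h}$ has $t$-derivative at $t=0$ equal to
$$\frac{\nabla_{g_h}v}{W_{\psi,h}}-\frac{\nabla_{g_h}\psi\,\langle\nabla_{g_h}\psi,\nabla_{g_h}v\rangle_{g_h}}{W_{\psi,h}^3}.$$
Since $\divg_{g_h}$ is linear and $t$-independent, it commutes with $\frac{d}{dt}$. To justify this exchange, note that the difference quotients converge in $C^{1,\alpha}$, which follows from the Banach-algebra argument of Proposition \ref{prop:FC1}. Alternatively, one can simply work in the chart $\pi$, where $\divg_{g_h}Z=\partial_iZ^i+\frac{\partial_i\det g_h}{2\det g_h}Z^i$ is linear in $Z$ with fixed coefficients. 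This gives the first line of \eqref{eq:derF1psi}. The term $(N-1)/W_{\psi,h}$ contributes $-(N-1)\langle\nabla_{g_h}\psi,\nabla_{g_h}v\rangle_{g_h}/W_{\psi,h}^3$. The exponential term contributes $-(N-1)He^\psi v$.

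\textbf{Finally, $F_2$.} The map $F_2(\psi,h)=d\psi(\nu_{g_h})$ is linear in $\psi$ for fixed $h$, so its derivative in the direction $v$ is $dv(\nu_{g_h})=\partial_{\nu_{g_h}}v$. This is \eqref{eq:derF2psi}.

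The only point needing care is to make sure that the Gateaux derivatives identify the Fréchet derivative. This is immediate from the $C^1$ regularity proved in Proposition \ref{prop:FC1}, so I expect no genuine obstacle; the computation is otherwise routine.
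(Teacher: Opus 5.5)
Your proposal is correct and follows essentially the same route as the paper. The paper also uses the linearity of $F_2$ in $\psi$, differentiates $W_{\psi,h}^{-1}$, applies the product rule, and uses the linearity of $\divg_{g_h}$ at fixed metric. Your remark that the Gateaux derivative identifies the Fréchet derivative via Proposition \ref{prop:FC1} is a detail the paper leaves implicit.
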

\begin{proof}
We first observe that for any fixed $h\in\Param$ the map $\psi\mapsto F_2(\psi,h)= \frac{\partial \psi}{\partial \nu_{g_h}}$ is linear, and thus \eqref{eq:derF2psi} is trivial. Let us prove \eqref{eq:derF1psi}.

For any $\psi\in C^{2,\alpha}(\overline{D_0})$ and $h\in\Param$ we easily check that
$$\frac{\partial}{\partial \psi}\bigl(W_{\psi,h}\bigr)[v]=\frac{\langle \nabla_{g_h}\psi,\nabla_{g_h} v\rangle_{g_h}}{W_{\psi,h}},
\qquad \frac{\partial}{\partial \psi}\left(\frac{1}{W_{\psi,h}}\right)[v]=-\frac{\langle \nabla_{g_h}\psi,\nabla_{g_h} v\rangle_{g_h}}{W_{\psi,h}^3},
$$
where $W_{\psi,h}$ is defined by \eqref{eq:notationsimplification}. Then, as the divergence (at fixed metric) is linear, it follows that
\begin{equation}\label{eq1:lem1sect3}
\frac{\partial}{\partial \psi}\left(\divg_{g_h}\left(\frac{\nabla_{g_h}\psi}{W_{\psi,h}}\right)\right)[v]
=\divg_{g_h}\left(\frac{\nabla_{g_h} v}{W_{\psi,h}} - \frac{\nabla_{g_h}\psi}{W_{\psi,h}^3}\langle \nabla_{g_h}\psi,\nabla_{g_h} v\rangle_{g_h}
\right).
\end{equation}
Moreover, for the lower order terms, taking into account that $H$ is a constant, we get
\begin{equation}\label{eq2:lem1sect3}
\displaystyle \frac{\partial}{\partial \psi}\left(\frac{N-1}{W_{\psi,h}}\right)[v]= \displaystyle -\frac{N-1}{W_{\psi,h}^3}\langle \nabla_{g_h}\psi,\nabla_{g_h} v\rangle_{g_h}, \quad
\displaystyle \frac{\partial}{\partial \psi}\bigl((N-1)He^{\psi}\bigr)[v] =\displaystyle (N-1)He^{\psi}v.
\end{equation}
Then putting together \eqref{eq1:lem1sect3} and \eqref{eq2:lem1sect3} we readily conclude. The proof is complete.
\end{proof}

\begin{remark}\label{rem:unifelliptApsi}
For any fixed $(\psi,h) \in C^{2,\alpha}(\overline{D_0})\times \Param$ the second-order linear partial differential operator $$L_{\psi,h}:C^{2,\alpha}(\overline{D_0})\to C^{0,\alpha}(\overline{D_0})$$ defined by
\begin{equation}\label{eq:defLpsih}
L_{\psi,h}v:=  D_\psi F_1(\psi,h)[v]
\end{equation}
is uniformly elliptic in $D_0$, with respect to $g_h$. 

Indeed, in view of \eqref{eq:derF1psi}, the second-order part, in divergence form, of $L_{\psi,h}$ is
$$
P_{\psi,h}v:=-\divg_{g_h}\bigl(A_{\psi,h}\nabla_{g_h}v\bigr),
$$
where  $A_{\psi,h}$ is the $g_h$-symmetric $(1,1)$-tensor field defined by
\beq\label{eq:defApsih}
A_{\psi,h}X:=\frac{X}{W_{\psi,h}}-\frac{\langle\nabla_{g_h}\psi,X\rangle_{g_h}}{W_{\psi,h}^3}\nabla_{g_h}\psi
\eeq
for every vector field $X$, and we readily check that, for any $p\in D_0$, for any $\xi\in T_pD_0$
$$
\langle A_{\psi,h}(p)\xi,\xi\rangle_{g_h(p)}\ge
\frac{|\xi|_{g_h(p)}^2}{(1+\kappa_{\psi,h}^2)^{3/2}},
$$
where 
$$
\kappa_{\psi,h}:= \max_{\overline{D_0}}|\nabla_{g_h}\psi|_{g_h}.
$$
\end{remark}

We now conclude this section with the following result.

\begin{proposition}\label{prop:Fredholm-general-parameter}
Let $(\psi,h)\in C^{2,\alpha}(\overline{D_0})\times\Param$ and consider
the linear operator
$$
D_\psi F(\psi,h):C^{2,\alpha}(\overline{D_0}) \longrightarrow C^{0,\alpha}(\overline{D_0}) \times C^{1,\alpha}(\partial D_0)
$$
defined by
$$
D_\psi F(\psi,h)v=\bigl(L_{\psi,h}v,\partial_{\nu_{g_h}}v\bigr),
$$
where
$$
L_{\psi,h}v=D_\psi F_1(\psi,h)[v]
$$
is given by \eqref{eq:defLpsih}. Then $D_\psi F(\psi,h)$ is a Fredholm
operator of index zero.
\end{proposition}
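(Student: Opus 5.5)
Fix $(\psi,h)$ and write $T:=D_\psi F(\psi,h)$. We view $T$ as an elliptic boundary value problem in $D_0$: a uniformly elliptic operator $L_{\psi,h}$ (Remark \ref{rem:unifelliptApsi}) with coefficients in the right Hölder classes, plus a first-order oblique boundary operator $B v:=\partial_{\nu_{g_h}}v=d v(\nu_{g_h})$. We then reduce to a standard invertible model by a compact perturbation and a homotopy in the parameter space. Fredholmness with index zero is stable under compact perturbations and under continuous deformation through Fredholm operators, so it suffices to connect $T$ to an isomorphism.

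\textbf{Step 1: regularity of coefficients.} In the chart $\pi$, expanding the divergence, $L_{\psi,h}v=-a^{ij}\partial_{ij}v+b^i\partial_i v+c\,v$. Here $a^{ij}=(g_h^{-1}A_{\psi,h})^{ij}$ involves $g_h^{-1}$ and $\nabla_{g_h}\psi$, so $a^{ij}\in C^{1,\alpha}$. The $b^i$ involve first derivatives of $a^{ij}$ and of $\det g_h$, hence lie in $C^{0,\alpha}$. Finally $c=-(N-1)He^\psi\in C^{0,\alpha}$. The boundary vector field $\nu_{g_h}\in C^{2,\alpha}(\partial D_0)$ (proof of Proposition \ref{prop:FC1}) is strictly oblique, since $\langle\nu_{g_h},\nu_{g_0}\rangle_{g_0}>0$ as both are outward. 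Since $\partial D_0\in C^{3,\alpha}$, the Schauder theory for oblique derivative problems (Gilbarg--Trudinger, Thm.~6.30/6.31 and Lemma 6.29) applies.

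\textbf{Step 2: an invertible model.} For $\lambda>0$ large, consider $T_\lambda v:=(L_{\psi,h}v+\lambda v,\ \partial_{\nu_{g_h}}v)$. Then $T-T_\lambda=(-\lambda\,\cdot,0)$ factors through the compact embedding $C^{2,\alpha}\hookrightarrow C^{0,\alpha}$ and is therefore compact. It remains to show that $T_\lambda$ is an isomorphism.

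\emph{Injectivity.} The zeroth-order coefficient of the nondivergence form is $c+\lambda>0$ once $\lambda>(N-1)He^{\max\psi}$. The maximum principle then applies: at a positive interior maximum of $v$ we get a contradiction, and at a boundary maximum the Hopf lemma gives $\partial_{\nu_{g_h}}v>0$, contradicting the boundary condition. Hence $T_\lambda v=0$ forces $v=0$.

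\emph{Surjectivity.} Given $(f,\phi)\in C^{0,\alpha}(\overline{D_0})\times C^{1,\alpha}(\partial D_0)$, the existence theorem for the oblique derivative problem (Gilbarg--Trudinger Thm.~6.31, which requires $c+\lambda>0$ or $\beta\cdot\nu$-type sign conditions) yields a solution $v\in C^{2,\alpha}(\overline{D_0})$. The same result supplies the a priori estimate $\|v\|_{C^{2,\alpha}}\le C(\|f\|_{C^{0,\alpha}}+\|\phi\|_{C^{1,\alpha}}+\|v\|_{C^0})$. Note that Gilbarg--Trudinger state this for $C^{2,\alpha}$ domains and $C^{1,\alpha}$ boundary coefficients, which is exactly the regularity we have.

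\textbf{Step 3: conclusion.} $T_\lambda$ is bounded, bijective, hence an isomorphism by the open mapping theorem, and thus Fredholm of index zero. Therefore $T=T_\lambda+K$, with $K$ compact, is Fredholm of index zero.

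The expected main obstacle is checking the precise hypotheses of the oblique derivative existence theorem in our Riemannian, single-chart setting. The divergence-form principal part must be rewritten in nondivergence form with only $C^{0,\alpha}$ lower-order coefficients, and the boundary operator must be verified to have $C^{1,\alpha}$ coefficients. A fallback, if citing the existence result is delicate, is the method of continuity. Connect $T_\lambda$ to $v\mapsto(-\Delta_{g_0}v+\lambda v,\partial_{\nu_{g_0}}v)$ by the linear path $t\mapsto$ convex combination of coefficients and boundary fields; this path stays uniformly elliptic and uniformly oblique. The uniform Schauder estimate together with injectivity (from the maximum principle) gives uniform bounds $\|v\|_{C^{2,\alpha}}\le C\|T^t v\|$, and the Neumann Laplacian problem with $\lambda>0$ is classically solvable.
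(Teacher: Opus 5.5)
Your proposal is correct and follows essentially the paper's route: a large zero-order shift makes the oblique derivative problem uniquely solvable in $C^{2,\alpha}$, the shift is a compact perturbation, and so $D_\psi F(\psi,h)$ is Fredholm of index zero. The one difference is the choice of shift. The paper shifts both components, via $\mathcal K v=(v,v|_{\partial\Omega_0})$, so that the sign hypotheses of \cite[Theorem 2.27]{LiebermanODP} hold verbatim. Your interior-only shift leaves no zero-order term in the boundary operator, so you need the variant of the oblique existence theorem with $\gamma=0$ and a strictly signed interior zero-order coefficient, rather than the usual $\gamma\beta\cdot\nu>0$ form you cite. Your maximum principle/Hopf argument supplies exactly the uniqueness this variant rests on, and the paper itself uses such an interior-only shift later, via \cite[Corollary 4.41]{LiebermanODP}, in Corollary \ref{cor:regwKernel}.
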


\begin{proof}
We work in the chart given by the stereographic projection $\pi$ from a pole $p_*\in \SN\setminus Q$ (see \eqref{eq:defsterproj}) and we define $\Omega_0$ as in \eqref{eq:defomega0}.
By Remark \ref{rem:unifelliptApsi}, the local coordinate expressions for the gradient and the divergence (see \eqref{eq1bis:appendix1} and \eqref{eq:divlocalcoordinates}), and since $g_h=h^*g_0$, we infer that, in local coordinates, ${D_\psi F}(\psi,h)$ is represented by
$$
\bigl(\widetilde L_{\psi,h},\widetilde M_h\bigr),
$$
where
$$
\widetilde L_{\psi,h}v=-\widetilde a^{ij}\partial_{ij}v+\widetilde b^i\partial_{i}v+\widetilde cv
$$
is a uniformly elliptic second-order linear operator in $\Omega_0$, with
$$
\widetilde a^{ij}\in C^{1,\alpha}(\overline{\Omega_0}),
\qquad
\widetilde b^i,\widetilde c
\in C^{0,\alpha}(\overline{\Omega_0}),
$$
for all $i,j=1,\ldots,N-1$, and
\begin{equation}\label{eq:Mpsiform}
\widetilde M_hv =\widetilde\beta\cdot \nabla v
\end{equation}
is a uniformly oblique first-order boundary operator, with
$$
\widetilde\beta
\in C^{1,\alpha}
(\partial\Omega_0,\mathbb R^{N-1}),
$$
where $\cdot$ and $\nabla$ denote here, respectively, the standard Euclidean scalar product and the gradient in $\mathbb R^{N-1}$. We point out that, in general, $\widetilde M_h$ is not a Neumann boundary operator.

It is standard to see that taking $\tau>0$ sufficiently large, the operator
$$
(\widetilde L_{\psi,h},\widetilde M_h)+\tau\mathcal K
$$
is an isomorphism, where
$$
\mathcal K:C^{2,\alpha}(\overline{\Omega_0})\longrightarrow C^{0,\alpha}(\overline{\Omega_0})\times C^{1,\alpha}(\partial\Omega_0)
$$
is defined by
$$
\mathcal Kv:=\bigl(v,v|_{\partial\Omega_0}\bigr).
$$
Indeed, with the sign conventions of \cite{LiebermanODP}, for $\tau$ large the pair $-(\widetilde L_{\psi,h},\widetilde M_h)-\tau\mathcal K$ satisfies the hypotheses of \cite[Theorem 2.27]{LiebermanODP}, the shift making the zero order coefficients of the interior and of the boundary operators strictly negative.

Since $\mathcal K$ is compact, by the compact H\"older embeddings, $(\widetilde L_{\psi,h},\widetilde M_h)$ is a compact perturbation of an isomorphism. Hence it is Fredholm of index zero. 
Finally, since $u\mapsto u\circ\pi^{-1}$ is an isomorphism of Banach spaces, and conjugation by isomorphisms preserves the Fredholm property and the index, the same conclusion holds for $D_\psi F(\psi,h)$. The proof is complete.
\end{proof}

\section{Derivative with respect to $h$}

In this section we compute the derivative of $F$ with respect to the domain parameter $h$. We begin with the derivative of $F$ at
$(\psi,\Id)$. We first establish a general first-variation formula for pull-back metrics.

\begin{lemma}\label{lem1:sect4}
Let  $t\mapsto h_t$ be a differentiable curve in $\DiffQ$ that satisfies
\beq\label{eqstat:lem1:sect4}
h_0=\Id, \qquad \left.\frac{d}{dt}\right|_{t=0}h_t=Y\in\Y.
\eeq
Then
\begin{equation}\label{eq:first-variation-pull-back-metric}
\left.\frac{d}{dt}\right|_{t=0}h_t^*g_0=\mathcal L_Yg_0,
\end{equation}
where $h_t^*g_0$ is the pull-back of $g_0$ by $h_t$, and $\mathcal L_Yg_0$ is the Lie derivative of $g_0$ along $Y$ (see \eqref{eq:defLieDerivative}).
\end{lemma}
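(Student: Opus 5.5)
The plan is to compute the derivative pointwise and in coordinates, using the extrinsic description of $g_0$ as the restriction of the Euclidean inner product. Work in the stereographic chart $\pi$ on a neighbourhood of a point; at points outside $Q$ everything is trivial since $h_t=\Id$ there. By \eqref{eq:gh-extrinsic} we have
$$(h_t^*g_0)_{ij}(x)=\bigl\langle\partial_ih_t(x),\partial_jh_t(x)\bigr\rangle .$$
Here $h_t$ is viewed as an $\RN$-valued map through the inclusion $\SN\subset\RN$.

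The first step is to justify that the time derivative commutes with the spatial derivatives $\partial_i$. The curve $t\mapsto h_t$ is differentiable in the $C^{3,\alpha}$ manifold structure. Through the chart $\Theta_{\Id}$ it corresponds to a differentiable curve $V_t$ in $\Y$, with $h_t=E(\Id,V_t)$ and $V_0=0$. As shown in the proof of Proposition \ref{prop:FC1}, $V\mapsto h$ is $C^1$ into $C^{3,\alpha}(\overline{\Omega_0},\RN)$. Hence $t\mapsto h_t$ is differentiable into $C^{3,\alpha}$, and by \eqref{eq:pointwisetangent} its derivative at $0$ is $Y$. Therefore
$$\left.\frac{d}{dt}\right|_{t=0}\partial_ih_t=\partial_iY$$
in $C^{2,\alpha}$, where $Y$ is regarded as an $\RN$-valued map tangent to the sphere. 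The product rule then gives
$$\left.\frac{d}{dt}\right|_{t=0}(h_t^*g_0)_{ij}=\langle\partial_iY,\partial_j\Id\rangle+\langle\partial_i\Id,\partial_jY\rangle .$$

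The second step is to identify this with $(\mathcal L_Yg_0)_{ij}$. Recall that $(\mathcal L_Yg_0)_{ij}=g_0(\nabla_iY,\partial_j)+g_0(\partial_i,\nabla_jY)$, where $\nabla$ is the Levi-Civita connection of $g_0$. Since $g_0$ is induced from $\RN$, this connection is the tangential projection of the ambient derivative: $\nabla_iY=(\partial_iY)^{\top}$. The coordinate field $\partial_j=\partial_j\Id$ is tangent to $\SN$, so
$$\langle(\partial_iY)^\top,\partial_j\rangle=\langle\partial_iY,\partial_j\rangle .$$
The normal component of $\partial_iY$, namely $-\langle Y,\partial_ip\rangle p$, pairs to zero with any tangent vector. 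This gives exactly the claimed identity.

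If the paper's definition \eqref{eq:defLieDerivative} is instead the flow definition, I would use the equivalent formula $\mathcal L_Yg(X,Z)=g(\nabla_XY,Z)+g(X,\nabla_ZY)$ for a torsion-free metric connection. I would quote this standard fact from the appendix and apply it to $X=\partial_i$, $Z=\partial_j$.

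The main obstacle is the interchange of $d/dt$ and $\partial_i$, that is, the regularity bookkeeping. Here I would rely on the $C^1$ dependence of the chart $V\mapsto E(\Id,V)$ in $C^{3,\alpha}$, which was already established in Proposition \ref{prop:FC1}. The remaining geometric identification is routine.
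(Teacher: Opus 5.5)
Your proposal is correct and is essentially the paper's proof: the extrinsic formula $(h_t^*g_0)_{ij}=\langle\partial_ih_t,\partial_jh_t\rangle$, differentiation at $t=0$, the identification of the Levi-Civita connection of $g_0$ with the tangential part of the ambient derivative, and \eqref{eq:liederiv}. Your $C^{3,\alpha}$ chart argument makes explicit the exchange of $\frac{d}{dt}$ and $\partial_i$, which the paper asserts in one line; the only slip is the claim that points outside $Q$ are trivial, since $h_t=\Id$ pointwise there does not give $dh_t=\mathrm{id}$ at points of $\partial Q$ (e.g.\ at $p_*$ when $Q=\SN\setminus\{p_*\}$), but the same coordinate computation in the chart $\pi_-$ handles such points, so nothing is lost.
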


\begin{proof}
In local coordinates $x=(x^1,\ldots,x^{N-1})$, identifying $\partial_ih_t(x)$, $\partial_jh_t(x)$ with the corresponding vectors in $\RN$, the components of
$h_t^*g_0$ are 
$$
(h_t^*g_0)_{ij}(x)=\langle\partial_ih_t(x),\partial_jh_t(x)\rangle .
$$
Since $h_0=\Id$, we have $\partial_ih_0=\partial_i$, while by \eqref{eqstat:lem1:sect4} it holds  $\left.\frac{d}{dt}\right|_{t=0}\partial_ih_t=\partial_iY$. Hence
$$
\left.\frac{d}{dt}\right|_{t=0}(h_t^*g_0)_{ij}=\langle\partial_iY,\partial_j\rangle+\langle\partial_i,\partial_jY\rangle .
$$
Since $g_0$ is the metric induced by $\langle\cdot,\cdot\rangle$, the Levi-Civita
connection of $g_0$ is the tangential part of the derivative in $\RN$ (see
\cite[Theorem 8.2]{LeeRM}), so that
$\langle\partial_iY,\partial_j\rangle=g_0(\nabla_{\partial_i}Y,\partial_j)$, the
coordinate vector fields being tangent to $\SN$. Therefore
$$
\left.\frac{d}{dt}\right|_{t=0}(h_t^*g_0)_{ij}=g_0(\nabla_{\partial_i}Y,\partial_j)+g_0(\partial_i,\nabla_{\partial_j}Y)=(\mathcal L_Yg_0)_{ij},
$$
where the last equality follows from \eqref{eq:liederiv} applied to $\partial_i$ and $\partial_j$.
This proves \eqref{eq:first-variation-pull-back-metric} and the proof is complete.
\end{proof}

We now apply Lemma \ref{lem1:sect4} to the curves in $\DiffQ$ provided by the exponential parametrization $\Theta_{\Id}$ centered at
$\Id$ (see \eqref{defThetahV}), and derive some variation formulas. For $Y\in\Y$ we
set
\beq\label{eq:defdeltaYg_0}
\delta_Yg_0:=\mathcal L_Yg_0 .
\eeq

\begin{lemma}\label{lem2sect4}
Let $\psi\in C^{2,\alpha}(\overline{D_0})$ and $Y\in\Y$. For $|t|$ sufficiently small, set
$$
h_t:=\Theta_{\Id}(tY),
\qquad
g_t:=h_t^*g_0,
$$
and
$$
W_{\psi,t}:=\sqrt{1+|\nabla_{g_t}\psi|_{g_t}^2}, \qquad W_\psi:=W_{\psi,0}=\sqrt{1+|\nabla_{g_0}\psi|_{g_0}^2}.
$$
Then
$$
\left.\frac{d}{dt}\right|_{t=0}\nabla_{g_t}\psi=-\bigl(\delta_Yg_0(\nabla_{g_0}\psi,\cdot)\bigr)^\sharp,\quad
\left.\frac{d}{dt}\right|_{t=0}|\nabla_{g_t}\psi|_{g_t}^2=-\delta_Yg_0(\nabla_{g_0}\psi,\nabla_{g_0}\psi),
$$
and
$$
\left.\frac{d}{dt}\right|_{t=0}\left(\frac{1}{W_{\psi,t}}\right)=\frac{\delta_Yg_0(\nabla_{g_0}\psi,\nabla_{g_0}\psi)}{2W_\psi^3},
$$
where $\bigl(\delta_Yg_0(\nabla_{g_0}\psi,\cdot)\bigr)^\sharp$ is the $g_0$-metric dual vector field of the one-form $\delta_Yg_0(\nabla_{g_0}\psi,\cdot)$ (see \eqref{eq:defomegasharp} for the definition).
\end{lemma}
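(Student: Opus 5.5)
The plan is to start from Lemma \ref{lem1:sect4}, which gives the variation of the metric. The curve $h_t=\Theta_{\Id}(tY)$ is differentiable in $\DiffQ$ with $h_0=\Id$. By \eqref{eq:pointwisetangent}, applied with $h=\Id$ and $V_t=tY$, its derivative at $t=0$ is $Y$. Lemma \ref{lem1:sect4} then yields
$$\dot g:=\left.\frac{d}{dt}\right|_{t=0}g_t=\mathcal L_Yg_0=\delta_Yg_0 .$$
Since $h\mapsto g_h$ is $C^1$ into $C^{2,\alpha}$ (proof of Proposition \ref{prop:FC1}), this derivative also holds pointwise in local coordinates, and all derivatives below may be computed pointwise in the chart $\pi$.

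The next step is to differentiate the inverse metric. From $g_t^{ik}(g_t)_{kj}=\delta^i_j$ one obtains
$$\left.\frac{d}{dt}\right|_{t=0}g_t^{ij}=-g_0^{ik}(\delta_Yg_0)_{kl}\,g_0^{lj}.$$
Because $\nabla_{g_t}\psi=g_t^{ij}\partial_j\psi\,\partial_i$ and $d\psi$ does not depend on $t$, this gives
$$\left.\frac{d}{dt}\right|_{t=0}\nabla_{g_t}\psi=-g_0^{ik}(\delta_Yg_0)_{kl}\,g_0^{lj}\partial_j\psi\,\partial_i .$$
Here $g_0^{lj}\partial_j\psi$ are the components of $\nabla_{g_0}\psi$, so the right-hand side is $g_0^{ik}\,\delta_Yg_0(\nabla_{g_0}\psi,\partial_k)\,\partial_i$ with a minus sign, i.e.\ $-\bigl(\delta_Yg_0(\nabla_{g_0}\psi,\cdot)\bigr)^\sharp$, as claimed.

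For the squared norm, write
$$|\nabla_{g_t}\psi|_{g_t}^2=g_t^{ij}\partial_i\psi\,\partial_j\psi ,$$
which is the cleanest form to differentiate. Its derivative is $-(\delta_Yg_0)_{kl}\,(\nabla_{g_0}\psi)^k(\nabla_{g_0}\psi)^l=-\delta_Yg_0(\nabla_{g_0}\psi,\nabla_{g_0}\psi)$. The same result follows from the product rule applied to $g_t(\nabla_{g_t}\psi,\nabla_{g_t}\psi)$, since the term $\dot g(\nabla\psi,\nabla\psi)$ and twice $g_0$ of the previous formula combine to the same answer.

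Finally, the chain rule applied to $t\mapsto(1+s(t))^{-1/2}$, with $s=|\nabla_{g_t}\psi|^2_{g_t}$, gives $-\tfrac12 W_\psi^{-3}\dot s=\delta_Yg_0(\nabla_{g_0}\psi,\nabla_{g_0}\psi)/(2W_\psi^3)$.

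The only mildly delicate step is justifying that $\frac{d}{dt}h_t=Y$ in the sense required by Lemma \ref{lem1:sect4}, and that differentiation commutes with the coordinate expressions. Both points follow from \eqref{eq:dexp0}, \eqref{eq:pointwisetangent} and the $C^1$ dependence established in Proposition \ref{prop:FC1}. Everything else is routine index calculus.
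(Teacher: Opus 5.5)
Your proof is correct and follows essentially the same route as the paper: you get $\dot g=\delta_Yg_0$ from Lemma \ref{lem1:sect4} via \eqref{eq:pointwisetangent}, then differentiate the gradient and its squared norm, and finish with the chain rule for $W_{\psi,t}^{-1}$. The only difference is cosmetic: you differentiate the inverse metric $g_t^{ij}$ in the chart, while the paper differentiates the coordinate-free identities $g_t(\nabla_{g_t}\psi,Z)=d\psi(Z)$ and $|\nabla_{g_t}\psi|_{g_t}^2=d\psi(\nabla_{g_t}\psi)$, which encode exactly the same computation.
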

\begin{proof}
Applying \eqref{eq:pointwisetangent} with $h=\Id$ and $V_t=tY$, the
map
\beq\label{eq1lem2sect4}
t\mapsto h_t=\Theta_{\Id}(tY),
\eeq
defined for $|t|$ sufficiently small, is a differentiable curve in $\DiffQ$ and satisfies
\eqref{eqstat:lem1:sect4}. Then, by
Lemma \ref{lem1:sect4},
\beq\label{eq2:lem2sect4}
\left.\frac{d}{dt}\right|_{t=0}g_t=\mathcal L_Yg_0=\delta_Yg_0.
\eeq
Moreover, as seen in the proof of Proposition \ref{prop:FC1}, the maps $t\mapsto\nabla_{g_t}\psi$ and
$t\mapsto W_{\psi,t}^{-1}$ are differentiable curves in
$C^{1,\alpha}(\overline{D_0},T\SN)$ and in $C^{1,\alpha}(\overline{D_0})$,
respectively.

Now, for every vector field $Z$ on $D_0$, by definition of gradient (see \eqref{eq:defgradient}) we have
$$
g_t(\nabla_{g_t}\psi,Z)=d\psi(Z).
$$
Then, differentiating at $t=0$, taking into account \eqref{eq2:lem2sect4}, we obtain
$$
\delta_Yg_0(\nabla_{g_0}\psi,Z)+g_0\left(\left.\frac{d}{dt}\right|_{t=0}\nabla_{g_t}\psi,Z\right)
=0.$$
Since $Z$ is arbitrary, by definition of metric dual vector field (see \eqref{eq:defomegasharp}), it follows that
$$
\left.\frac{d}{dt}\right|_{t=0}\nabla_{g_t}\psi=-\bigl(\delta_Yg_0(\nabla_{g_0}\psi,\cdot)\bigr)^\sharp.
$$
Moreover, since
$$
|\nabla_{g_t}\psi|_{g_t}^2=d\psi(\nabla_{g_t}\psi),
$$
differentiating at $t=0$ and using the previous identity, we get
\beq\label{eq3:lem2sect4}
\left.\frac{d}{dt}\right|_{t=0}|\nabla_{g_t}\psi|_{g_t}^2=-\delta_Yg_0(\nabla_{g_0}\psi,\nabla_{g_0}\psi).
\eeq
Finally, differentiating
$$
W_{\psi,t}^{-1}=\left(1+|\nabla_{g_t}\psi|_{g_t}^2\right)^{-1/2}
$$
and using \eqref{eq3:lem2sect4}, we obtain
$$
\left.\frac{d}{dt}\right|_{t=0}\left(\frac{1}{W_{\psi,t}}\right)=\frac{\delta_Yg_0(\nabla_{g_0}\psi,\nabla_{g_0}\psi)}{2W_\psi^3}.
$$
The proof is complete.
\end{proof}

We now compute $D_hF(\psi,\Id)$.

\begin{lemma}\label{lem3sect4}
Let $\psi\in C^{2,\alpha}(\overline{D_0})$ and let $Y\in\Y$. It holds
$$
\begin{aligned}
D_hF_1(\psi,\Id)[Y]
=& \divg_{g_0}\left(\frac{\bigl(\delta_Yg_0(\nabla_{g_0}\psi,\cdot)\bigr)^\sharp}{W_\psi}\right)-
\divg_{g_0}\left(\frac{\delta_Yg_0(\nabla_{g_0}\psi,\nabla_{g_0}\psi)}{2W_\psi^3}\nabla_{g_0}\psi\right)\\[3pt]
&-\frac{\left\langle\nabla_{g_0}(\tr_{g_0}\delta_Yg_0),\nabla_{g_0}\psi\right\rangle_{g_0}}{2W_\psi}+\frac{(N-1)\delta_Yg_0(\nabla_{g_0}\psi,\nabla_{g_0}\psi)}{2W_\psi^3},
\end{aligned}
$$
and
$$
D_hF_2(\psi,\Id)[Y]=-\delta_Yg_0 \bigl(\nu_{g_0},\nabla_{g_0}^{\top}\psi\bigr)-\frac12\delta_Yg_0(\nu_{g_0},\nu_{g_0}) \partial_{\nu_{g_0}}\psi,\quad\quad\quad\quad\quad
$$
where $\nu_{g_0}$ is the outward unit normal to $\partial D_0$ with respect to $g_0$, $\nabla_{g_0}^{\top}\psi$ is the tangential
component of $\nabla_{g_0}\psi$ on $\partial D_0$, $\delta_Y g_0$ is defined by \eqref{eq:defdeltaYg_0}, and $\bigl(\delta_Yg_0(\nabla_{g_0}\psi,\cdot)\bigr)^\sharp$, $W_\psi$ are as in Lemma \ref{lem2sect4}.
\end{lemma}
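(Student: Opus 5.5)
The plan is to compute $D_hF(\psi,\Id)[Y]$ as the $t$-derivative at $t=0$ of $F(\psi,h_t)$ along the curve $h_t:=\Theta_{\Id}(tY)$. Since $F$ is of class $C^1$ by Proposition \ref{prop:FC1}, and by \eqref{eq:pointwisetangent} this curve has $h_0=\Id$ and tangent vector $Y$, the chain rule gives $D_hF(\psi,\Id)[Y]=\frac{d}{dt}\big|_{t=0}F(\psi,h_t)$. Setting $g_t:=h_t^*g_0$, Lemma \ref{lem1:sect4} (as used in Lemma \ref{lem2sect4}) gives $\dot g_0=\delta_Yg_0$. Lemma \ref{lem2sect4} then gives the derivatives of $\nabla_{g_t}\psi$ and of $W_{\psi,t}^{-1}$. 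The term $(N-1)He^\psi$ does not depend on $h$, so it contributes nothing.

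For $F_1$, I will use the product rule on the vector field $Z_t:=\nabla_{g_t}\psi/W_{\psi,t}$. This gives
\[
\dot Z_0=-\frac{\bigl(\delta_Yg_0(\nabla_{g_0}\psi,\cdot)\bigr)^\sharp}{W_\psi}+\frac{\delta_Yg_0(\nabla_{g_0}\psi,\nabla_{g_0}\psi)}{2W_\psi^3}\nabla_{g_0}\psi .
\]
Next I need the variation of the divergence operator itself. By the local formula $\divg_{g}Z=\partial_iZ^i+Z^i\partial_i\log\sqrt{\det g}$ and Jacobi's formula $\frac{d}{dt}\log\det g_t=\tr_{g_t}\dot g_t$, I obtain
\[
\left.\frac{d}{dt}\right|_{t=0}\divg_{g_t}Z_t=\divg_{g_0}\dot Z_0+\tfrac12\,d\bigl(\tr_{g_0}\delta_Yg_0\bigr)(Z_0).
\]
Inserting $\dot Z_0$ and $Z_0=\nabla_{g_0}\psi/W_\psi$, and putting a minus sign in front, gives the first three terms of the claimed formula. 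The last term comes from differentiating $(N-1)W_{\psi,t}^{-1}$ with the third identity of Lemma \ref{lem2sect4}. The only point needing care is regularity: differentiating $\partial_i\log\sqrt{\det g_t}$ in $t$ commutes with $\partial_i$ in $C^{0,\alpha}$. This holds because $t\mapsto g_t$ is $C^1$ into $C^{2,\alpha}$, as shown in the proof of Proposition \ref{prop:FC1}.

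For $F_2$, I will write $F_2(\psi,h_t)=d\psi(\nu_{g_t})$ with $\nu_{g_t}=-\nabla_{g_t}\rho/|\nabla_{g_t}\rho|_{g_t}$, where $\rho$ is the defining function from \eqref{eq:deffunctrho}. Applying the first two identities of Lemma \ref{lem2sect4} to $\rho$ in place of $\psi$, and using $\nabla_{g_0}\rho=-|\nabla_{g_0}\rho|_{g_0}\nu_{g_0}$ together with the bilinearity of $\delta_Yg_0$, I expect
\[
\dot\nu_0=-\bigl(\delta_Yg_0(\nu_{g_0},\cdot)\bigr)^\sharp+\tfrac12\,\delta_Yg_0(\nu_{g_0},\nu_{g_0})\,\nu_{g_0}.
\]
Then $\frac{d}{dt}\big|_{t=0}F_2=d\psi(\dot\nu_0)=-\delta_Yg_0(\nu_{g_0},\nabla_{g_0}\psi)+\tfrac12\delta_Yg_0(\nu_{g_0},\nu_{g_0})\partial_{\nu_{g_0}}\psi$. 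Splitting $\nabla_{g_0}\psi=\nabla^\top_{g_0}\psi+\partial_{\nu_{g_0}}\psi\,\nu_{g_0}$ and combining the normal parts yields exactly the stated coefficient $-\tfrac12$.

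The main obstacle is bookkeeping rather than any conceptual step. The signs must match throughout: the outward-normal convention for $\rho$, the $-$ in front of the divergence, and the sign of the $\sharp$-terms. The variation of $\divg_{g_t}$ must also be justified properly in local coordinates in the chart $\pi$.
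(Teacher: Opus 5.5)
Your proposal is correct and follows the paper's proof. It uses the same curve $h_t=\Theta_{\Id}(tY)$, Lemma \ref{lem2sect4} for $\dot X_0$ and for the derivative of $W_{\psi,t}^{-1}$, and the coordinate variation of $\divg_{g_t}$ via Jacobi's formula, which the paper isolates as Lemma \ref{lem2appendix}. The only inessential difference is in $F_2$: you differentiate the explicit formula $\nu_{g_t}=-\nabla_{g_t}\rho/|\nabla_{g_t}\rho|_{g_t}$, whereas the paper reads off the normal and tangential components of $\dot\nu_0$ by differentiating $g_t(\nu_t,\nu_t)=1$ and $g_t(\nu_t,\tau)=0$; both yield $\dot\nu_0=-\bigl(\delta_Yg_0(\nu_{g_0},\cdot)\bigr)^\sharp+\frac12\delta_Yg_0(\nu_{g_0},\nu_{g_0})\,\nu_{g_0}$.
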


\begin{proof}
Let $Y\in\Y$ and, for $|t|$ sufficiently small, consider the curve $h_t$ defined in \eqref{eq1lem2sect4}.
As seen in the proof of Lemma \ref{lem2sect4}, $h_t$ satisfies \eqref{eqstat:lem1:sect4}. Moreover, since $\Param$ is an open neighborhood of $\Id$, we also have
$h_t\in\Param$ for $|t|$ sufficiently small, and $F$ is of class $C^1$ by Proposition \ref{prop:FC1}. Hence
$$
D_hF_1(\psi,\Id)[Y]=\left.\frac{d}{dt}\right|_{t=0}F_1(\psi,h_t),
\qquad D_hF_2(\psi,\Id)[Y]=\left.\frac{d}{dt}\right|_{t=0}F_2(\psi,h_t).
$$
Let $g_t$, $W_{\psi,t}$ be as in Lemma \ref{lem2sect4}.
By Lemma \ref{lem1:sect4}, it holds
$$
\left.\frac{d}{dt}\right|_{t=0}g_t=\mathcal L_Yg_0=\delta_Yg_0.
$$
Setting for simplicity
$$
X_t:=\frac{\nabla_{g_t}\psi}{W_{\psi,t}}, \qquad \qquad \nu_t:=\nu_{g_t},
$$
we write
$$
F_1(\psi,h_t)=-\divg_{g_t}(X_t)+\frac{N-1}{W_{\psi,t}}-(N-1)He^\psi
$$
and
$$
F_2(\psi,h_t)=d\psi(\nu_t).
$$

\medskip

We first compute the derivative of $F_1$.\\

By Lemma \ref{lem2appendix}, we have
$$
\left.\frac{d}{dt}\right|_{t=0} \divg_{g_t}(X_t)=\divg_{g_0}(\dot X_0)+\frac12\left\langle\nabla_{g_0}(\tr_{g_0}\delta_Yg_0), X_0\right\rangle_{g_0},
$$
where
$$
\dot X_0:=\left.\frac{d}{dt}\right|_{t=0}X_t.
$$
Using the product rule and Lemma \ref{lem2sect4}, we obtain
$$
\dot X_0=-\frac{\bigl(\delta_Yg_0(\nabla_{g_0}\psi,\cdot)\bigr)^\sharp}{W_\psi}
+\frac{\delta_Yg_0(\nabla_{g_0}\psi,\nabla_{g_0}\psi)}{2W_\psi^3}\nabla_{g_0}\psi
$$
and
$$
\left.\frac{d}{dt}\right|_{t=0}\left(\frac{1}{W_{\psi,t}}\right)=\frac{\delta_Yg_0(\nabla_{g_0}\psi,\nabla_{g_0}\psi)}{2W_\psi^3}.
$$
Since
$$
X_0=\frac{\nabla_{g_0}\psi}{W_\psi},
$$
combining these identities and observing that $He^\psi$ is independent
of $t$, we obtain
$$
\begin{aligned}
D_hF_1(\psi,\Id)[Y]
=& \divg_{g_0}\left(\frac{\bigl(\delta_Yg_0(\nabla_{g_0}\psi,\cdot)\bigr)^\sharp}{W_\psi}\right)
-\divg_{g_0}\left(\frac{\delta_Yg_0(\nabla_{g_0}\psi,\nabla_{g_0}\psi)}{2W_\psi^3}\nabla_{g_0}\psi\right)\\[3pt]
&-\frac{\left\langle\nabla_{g_0}(\tr_{g_0}\delta_Yg_0),\nabla_{g_0}\psi\right\rangle_{g_0}}{2W_\psi}+
\frac{(N-1)\delta_Yg_0(\nabla_{g_0}\psi,\nabla_{g_0}\psi)}{2W_\psi^3},
\end{aligned}
$$
which proves the first identity of Lemma \ref{lem3sect4}.\\

We now compute the derivative of $F_2$.\\

As seen in the proof of Proposition \ref{prop:FC1}, the curve $t\mapsto\nu_t$ is differentiable.
Setting
$$
\dot\nu_0:=\left.\frac{d}{dt}\right|_{t=0}\nu_t,
$$
since $\psi$ is independent of $t$, we have
$$
\left.\frac{d}{dt}\right|_{t=0}F_2(\psi,h_t)=d\psi(\dot\nu_0)=\langle\nabla_{g_0}\psi,\dot\nu_0\rangle_{g_0}.
$$
Differentiating the identity
$$
g_t(\nu_t,\nu_t)=1
$$
at $t=0$, we obtain
$$
\delta_Yg_0(\nu_{g_0},\nu_{g_0})+2\langle\dot\nu_0,\nu_{g_0}\rangle_{g_0}=0,
$$
and hence
$$
\langle\dot\nu_0,\nu_{g_0}\rangle_{g_0}=-\frac12\delta_Yg_0(\nu_{g_0},\nu_{g_0}).
$$
Let $\tau$ be a fixed vector field tangent to $\partial D_0$. Differentiating
$$
g_t(\nu_t,\tau)=0
$$
at $t=0$, we get
$$
\delta_Yg_0(\nu_{g_0},\tau)+\langle\dot\nu_0,\tau\rangle_{g_0}=0.
$$
Therefore,
$$
\langle\dot\nu_0,\tau\rangle_{g_0}
=-\delta_Yg_0(\nu_{g_0},\tau).
$$
Since 
$$
\nabla_{g_0}\psi=\nabla_{g_0}^{\top}\psi+\partial_{\nu_{g_0}}\psi\,\nu_{g_0} \quad \hbox{on $\partial D_0$},
$$
it follows that
$$
\begin{aligned}
D_hF_2(\psi,\Id)[Y]&=\langle\nabla_{g_0}\psi,\dot\nu_0\rangle_{g_0}\\
&=-\delta_Yg_0\bigl(\nu_{g_0},\nabla_{g_0}^{\top}\psi\bigr)-\frac12 \delta_Yg_0(\nu_{g_0},\nu_{g_0}) \partial_{\nu_{g_0}}\psi.
\end{aligned}
$$
This proves the second identity of Lemma \ref{lem3sect4} and the proof is complete.
\end{proof}

We conclude this section by showing the corresponding formulas for $D_h F_1(\psi,h)$, $D_h F_2(\psi,h)$ at an arbitrary point $(\psi,h)\in C^{2,\alpha}(\overline{D_0})\times\Param$. 
However, we point out that these general formulas are not needed in the sequel, since, as we will see later in Section 8, we can always reduce to the case $h=\Id$.

Let $(\psi,h)\in C^{2,\alpha}(\overline{D_0})\times\Param$ and $Y\in\Y$. Since $Y\circ h\in\Y_h$, applying \eqref{eq:pointwisetangent} with $V_t=tY\circ h$ we obtain that the differentiable curve in $\DiffQ$
$$
t\mapsto h_t:=\Theta_h(tY\circ h),
$$
defined for $|t|$ sufficiently small, satisfies
$$
h_0=h, \qquad \left.\frac{d}{dt}\right|_{t=0}h_t=Y\circ h,
$$
and, since $\Param$ is open in $\DiffQ$, we have $h_t\in\Param$ for all sufficiently small $|t|$. Moreover, denoting by
$k_t$ the curve defined as in \eqref{eq1lem2sect4}, i.e. $t\mapsto k_t:=\Theta_{\Id}(tY)$, we have
$$
h_t=k_t\circ h,
$$
and by the composition rule for pull-backs, it holds
$$
g_{h_t}=h_t^*g_0=(k_t\circ h)^*g_0=h^*(k_t^*g_0).
$$
Since $k_t$ satisfies \eqref{eqstat:lem1:sect4}, by
Lemma \ref{lem1:sect4} we have
$$
\left.\frac{d}{dt}\right|_{t=0}k_t^*g_0=\mathcal L_Yg_0,
$$
and thus, as $h^*$ is linear and continuous for fixed $h$, we deduce that
$$
\left.\frac{d}{dt}\right|_{t=0}g_{h_t}=h^*(\mathcal L_Yg_0).
$$
Let us set
$$
\delta_{Y,h}g_h:=h^*(\mathcal L_Yg_0).
$$
We notice that the proofs of Lemma \ref{lem2sect4} and of
Lemma \ref{lem3sect4} only use the metric at $t=0$ and its first variation, as does
Lemma \ref{lem2appendix}. Hence they apply with $g_0$ replaced by $g_h$ and
$\delta_Yg_0$ replaced by $\delta_{Y,h}g_h$. We therefore obtain
$$
\begin{aligned}
D_hF_1(\psi,h)[Y\circ h]
=&\divg_{g_h}\left(\frac{\bigl(\delta_{Y,h}g_h(\nabla_{g_h}\psi,\cdot)\bigr)^{\sharp_{g_h}}}{W_{\psi,h}}\right)-\divg_{g_h}
\left(\frac{\delta_{Y,h}g_h(\nabla_{g_h}\psi,\nabla_{g_h}\psi)}{2W_{\psi,h}^3}\nabla_{g_h}\psi\right)\\[3pt]
&-\frac{\left\langle\nabla_{g_h}\bigl(\tr_{g_h}\delta_{Y,h}g_h\bigr),\nabla_{g_h}\psi\right\rangle_{g_h}}{2W_{\psi,h}}+\frac{(N-1)\delta_{Y,h}g_h(\nabla_{g_h}\psi,\nabla_{g_h}\psi)}{2W_{\psi,h}^3},\\
D_hF_2(\psi,h)[Y\circ h]=&-\delta_{Y,h}g_h\bigl(\nu_{g_h},\nabla_{g_h}^{\top}\psi\bigr)-\frac12
\delta_{Y,h}g_h(\nu_{g_h},\nu_{g_h})
\partial_{\nu_{g_h}}\psi,
\end{aligned}
$$
where $W_{\psi,h}$ is given by \eqref{eq:notationsimplification}, $\bigl(\delta_{Y,h}g_h(\nabla_{g_h}\psi,\cdot)\bigr)^{\sharp_{g_h}}$ is the $g_h$-metric dual of the one-form $\delta_{Y,h}g_h(\nabla_{g_h}\psi,\cdot)$ (see \eqref{eq:defomegasharp} for the definition), and $\nabla_{g_h}^{\top}\psi$ denotes the tangential component of $\nabla_{g_h}\psi$ on $\partial D_0$ with respect to $g_h$.

\section{Kernel of the adjoint of $D_\psi F(\psi,\Id)$}

In this section we characterize the kernel of the adjoint of
$D_\psi F(\psi,\Id)$. We only consider it at $(\psi, \Id)$, instead of considering a generic diffeomorphism $h$, to keep the notation simple. Indeed the proof depends on $D_0$ only through its regularity and the inclusion $D_0\Subset Q$. 
Consequently, the case of a general diffeomorphism $h$ can be treated by taking $D_h$ as the reference domain and replacing $\psi$ with $\psi\circ h^{-1}$, thereby reducing the problem to the identity. 
This reduction will be used in Section 8.
\medskip

Let $\psi\in C^{2,\alpha}(\overline{D_0})$ and set, for simplicity,
\begin{equation}\label{eq:simplnotation}
A_\psi:=A_{\psi,\Id}, \qquad W_\psi:=\sqrt{1+|\nabla_{g_0}\psi|_{g_0}^2}, \qquad b_\psi:=\frac{N-1}{W_\psi^3}\nabla_{g_0}\psi, \qquad c_\psi:=(N-1)He^\psi,
\end{equation}
where, for $h\in\Param$, $A_{\psi,h}$ is the $g_h$-symmetric $(1,1)$-tensor field defined in \eqref{eq:defApsih}.

In view of Lemma \ref{lem:DFpsi}, the linear operator
$D_\psi F(\psi,\Id)$ naturally extends to a bounded linear operator from
$H^2(D_0)$ to $L^2(D_0)\times L^2(\partial D_0)$, and we still denote its
extension by $D_\psi F(\psi,\Id)$. Namely,
$$
D_\psi F(\psi,\Id):H^2(D_0)\longrightarrow L^2(D_0)\times L^2(\partial D_0), \qquad D_\psi F(\psi,\Id)u =(L_\psi u,\partial_{\nu_{g_0}}u),
$$
where
$$
L_\psi u:=D_\psi F_1(\psi,\Id)[u]=-\divg_{g_0}(A_\psi\nabla_{g_0}u)-\langle b_\psi,\nabla_{g_0}u\rangle_{g_0}-c_\psi u,
$$
and $L^2(D_0)\times L^2(\partial D_0)$ is endowed with the inner product
\begin{equation}\label{eq:defscalarprodL2L2}
\bigl((v_1,v_2),(w_1,w_2)\bigr)_{L^2(D_0)\times L^2(\partial D_0)}:=\int_{D_0}v_1w_1\,d\mu_{g_0}+\int_{\partial D_0}v_2w_2\,d\sigma_{g_0}.
\end{equation}
Let $L_\psi^*$ be the formal adjoint of $L_\psi$ (see \cite[Chapter 2, Sect. 2]{LionsMagenes}), which is defined as the adjoint of $L_\psi$ in the sense of distributions in $D_0$, i.e. $L_\psi^*$ satisfies
$$
\int_{D_0}(L_\psi u)w\,d\mu_{g_0}=\int_{D_0}u (L_\psi^* w) \,d\mu_{g_0},
$$
for all $u,w \in C_c^\infty(D_0)$. We begin with the following preliminary result.

\begin{lemma}\label{lem1:sect5}
Let $\psi\in C^{2,\alpha}(\overline{D_0})$ and let $L_\psi^*$ be the formal adjoint of $L_\psi$. 
We have that
\begin{equation}\label{eq:formaladjointLpsi}
L_\psi^*w=-\divg_{g_0}(A_\psi\nabla_{g_0}w)+\divg_{g_0}(b_\psi w)-c_\psi w,
\end{equation}
where $A_\psi$, $b_\psi$, $c_\psi$ are as in \eqref{eq:simplnotation}.
If, in addition, $\psi$ satisfies
\beq\label{eq:Neumpsi}
\partial_{\nu_{g_0}}\psi=0
\qquad\text{on }\partial D_0,
\eeq
and defining $L_\psi^*:H^2(D_0)\to L^2(D_0)$ by \eqref{eq:formaladjointLpsi}, then, for every $u,w\in H^2(D_0)$, we have
\begin{equation}\label{eq1:statlem1sect5}
\begin{aligned}
\int_{D_0}(L_\psi u)w\,d\mu_{g_0}=&\int_{D_0}u(L_\psi^*w)\,d\mu_{g_0}\\
&+\int_{\partial D_0}\frac{u}{W_\psi}\, \partial_{\nu_{g_0}}w\,d\sigma_{g_0}-\int_{\partial D_0}\frac{w}{W_\psi}\, \partial_{\nu_{g_0}}u\,d\sigma_{g_0}.
\end{aligned}
\end{equation}
\end{lemma}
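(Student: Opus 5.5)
The plan is to integrate by parts term by term, using the divergence theorem on $(D_0,g_0)$. Since $D_0$ is $C^{3,\alpha}$ and the coefficients are regular ($A_\psi\in C^{1,\alpha}$, $b_\psi\in C^{1,\alpha}$, $c_\psi\in C^{0,\alpha}$), all the manipulations below are legitimate for smooth $u,w$ up to the boundary. They then extend to $u,w\in H^2(D_0)$ by density of $C^\infty(\overline{D_0})$ in $H^2(D_0)$ and continuity of the traces $H^2\to L^2(\partial D_0)$ for $u$ and $\partial_{\nu}u$. The formal adjoint formula \eqref{eq:formaladjointLpsi} falls out of the same computation once boundary terms are discarded (take $u,w\in C^\infty_c(D_0)$).

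\textbf{Second-order term.} By the $g_0$-symmetry of $A_\psi$ and two applications of the divergence theorem,
$$
-\int_{D_0}\divg_{g_0}(A_\psi\nabla u)\,w=\int_{D_0}\langle A_\psi\nabla u,\nabla w\rangle-\int_{\partial D_0}w\langle A_\psi\nabla u,\nu\rangle,
$$
and symmetrically
$$
\int_{D_0}\langle \nabla u,A_\psi\nabla w\rangle=-\int_{D_0}u\,\divg_{g_0}(A_\psi\nabla w)+\int_{\partial D_0}u\langle A_\psi\nabla w,\nu\rangle .
$$
The key point is to evaluate $A_\psi\nu$ on $\partial D_0$. From \eqref{eq:defApsih},
$$
A_\psi\nu=\frac{\nu}{W_\psi}-\frac{\langle\nabla\psi,\nu\rangle}{W_\psi^3}\nabla\psi=\frac{\nu}{W_\psi},
$$
thanks to the Neumann condition \eqref{eq:Neumpsi}. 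Hence $\langle A_\psi\nabla u,\nu\rangle=\langle\nabla u,A_\psi\nu\rangle=\partial_\nu u/W_\psi$, and similarly for $w$. This produces exactly the two boundary integrals in \eqref{eq1:statlem1sect5}.

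\textbf{First-order term.} Here
$$
-\int_{D_0}\langle b_\psi,\nabla u\rangle w=-\int_{D_0}\langle b_\psi w,\nabla u\rangle=\int_{D_0}u\,\divg_{g_0}(b_\psi w)-\int_{\partial D_0}uw\langle b_\psi,\nu\rangle .
$$
The boundary term vanishes because $b_\psi$ is a multiple of $\nabla\psi$ and $\partial_\nu\psi=0$.

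\textbf{Zero-order term.} The term $-c_\psi u$ is symmetric and contributes no boundary term.

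Summing the three contributions gives \eqref{eq1:statlem1sect5} with $L_\psi^*$ as in \eqref{eq:formaladjointLpsi}. The only delicate points are the regularity/density justification for $H^2$ functions (note that $\divg(b_\psi w)\in L^2$ since $b_\psi\in C^{1}$), and the observation that the Neumann condition on $\psi$ is what makes the conormal derivative collapse to $\partial_\nu/W_\psi$ and kills the $b_\psi$ boundary term. Neither is a real obstacle.
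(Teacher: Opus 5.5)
Your proposal is correct and follows essentially the same route as the paper: you integrate the second-order term by parts twice using the $g_0$-symmetry of $A_\psi$, integrate the first-order term once, read off $L_\psi^*$ from compactly supported functions, and then use $\partial_{\nu_{g_0}}\psi=0$ to get $\langle A_\psi\nabla_{g_0}u,\nu_{g_0}\rangle_{g_0}=\partial_{\nu_{g_0}}u/W_\psi$ and $\langle b_\psi,\nu_{g_0}\rangle_{g_0}=0$. Your density argument for passing from smooth functions to $H^2(D_0)$ is a small extra justification that the paper leaves implicit.
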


\begin{proof}
For any $u,w\in H^2(D_0)$, we have
\begin{equation}\label{eq0:lem1sect5}
\begin{aligned}
&\int_{D_0}\bigl[-\divg_{g_0}(A_\psi\nabla_{g_0}u)\bigr]w\,d\mu_{g_0}\\[3pt]
=&\int_{D_0}u\bigl[-\divg_{g_0}(A_\psi\nabla_{g_0}w)\bigr]\,d\mu_{g_0}+\int_{\partial D_0}u\langle A_\psi\nabla_{g_0}w,\nu_{g_0}\rangle_{g_0}\,d\sigma_{g_0}\\
&-\int_{\partial D_0}w \langle A_\psi\nabla_{g_0}u,\nu_{g_0}\rangle_{g_0}\,d\sigma_{g_0}.
\end{aligned}
\end{equation}
Moreover, integrating by parts the first-order term, we obtain
\begin{equation}\label{eq2:lem1sect5}
-\int_{D_0}\langle b_\psi,\nabla_{g_0}u\rangle_{g_0}w\,d\mu_{g_0}=\int_{D_0}u\,\divg_{g_0}(b_\psi w)\,d\mu_{g_0}-\int_{\partial D_0}uw \langle b_\psi,\nu_{g_0}\rangle_{g_0}
\,d\sigma_{g_0}.
\end{equation}
If in particular $u,w \in C_c^\infty(D_0)$, then, from \eqref{eq0:lem1sect5} and \eqref{eq2:lem1sect5} we get that
$$\int_{D_0}(L_\psi u)w\,d\mu_{g_0}=\int_{D_0}u \left(-\divg_{g_0}(A_\psi\nabla_{g_0}w)+\divg_{g_0}(b_\psi w)-c_\psi w \right)\,d\mu_{g_0},$$
and thus, by definition, it follows that $L_\psi^*$ is given by \eqref{eq:formaladjointLpsi}.

Finally, to prove \eqref{eq1:statlem1sect5}, let $u,w\in H^2(D_0)$ and assume that \eqref{eq:Neumpsi} holds. By definition
of $b_\psi$, we have
\begin{equation}\label{eq3:lem1sect5}
\langle b_\psi,\nu_{g_0}\rangle_{g_0}=\frac{N-1}{W_\psi^3}\partial_{\nu_{g_0}}\psi=0 \qquad\text{on }\partial D_0.
\end{equation}
In addition, using the definition of $A_\psi$ and again
\eqref{eq:Neumpsi}, we readily check that
\begin{equation}\label{eq4:lem1sect5}
\langle A_\psi\nabla_{g_0}u,\nu_{g_0}\rangle_{g_0}=\frac{1}{W_\psi}\partial_{\nu_{g_0}}u,\quad
\langle A_\psi\nabla_{g_0}w,\nu_{g_0}\rangle_{g_0}=\frac{1}{W_\psi}\partial_{\nu_{g_0}}w
\qquad\text{on }\partial D_0.
\end{equation}
Combining \eqref{eq0:lem1sect5} and
\eqref{eq2:lem1sect5}, and taking into account \eqref{eq:formaladjointLpsi},
\eqref{eq3:lem1sect5} and \eqref{eq4:lem1sect5}, we obtain
\eqref{eq1:statlem1sect5}. The proof is complete.
\end{proof}

The main result of this section is the following.

\begin{theorem}\label{theo1sect5}
Let $\psi\in C^{2,\alpha}(\overline{D_0})$ satisfy \eqref{eq:Neumpsi}.
Consider the bounded linear operator
$$
D_\psi F(\psi,\Id):H^2(D_0) \longrightarrow L^2(D_0)\times L^2(\partial D_0), \qquad u\longmapsto(L_\psi u,\partial_{\nu_{g_0}}u),
$$
and let
$$
(D_\psi F(\psi,\Id))': L^2(D_0)\times L^2(\partial D_0) \longrightarrow(H^2(D_0))'
$$
be its adjoint, after the canonical Riesz identification
$$
\bigl(L^2(D_0)\times L^2(\partial D_0)\bigr)' \cong L^2(D_0)\times L^2(\partial D_0).
$$
Then, for $(w,\eta)\in L^2(D_0)\times L^2(\partial D_0)$, the following
conditions are equivalent:

\begin{itemize}
\item[(i)]
$(w,\eta)\in\ker\bigl((D_\psi F(\psi,\Id))'\bigr)$;

\item[(ii)]
$w\in H^2(D_0)$ and
\begin{equation}\label{eq1:statteo1sect5}
\begin{cases}
-\divg_{g_0}(A_\psi\nabla_{g_0}w)+\divg_{g_0}(b_\psi w)-c_\psi w=0
&\text{in }D_0,\\[0.4em]
\partial_{\nu_{g_0}}w=0
&\text{on }\partial D_0,\\[0.4em]
\displaystyle\eta=\frac{w}{W_\psi}
&\text{on }\partial D_0.
\end{cases}
\end{equation}
\end{itemize}
In particular, recalling \eqref{eq:formaladjointLpsi}, we have
$$
\ker\bigl((D_\psi F(\psi,\Id))'\bigr)=\left\{\left(w,\frac{w}{W_\psi}\right);\ w\in H^2(D_0),\, L_\psi^*w=0\text{ in }D_0,\, \partial_{\nu_{g_0}}w=0\text{ on }\partial D_0\right\}.
$$
\end{theorem}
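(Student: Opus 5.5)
The direction (ii)$\Rightarrow$(i) will follow directly from Lemma \ref{lem1:sect5}. Condition (i) means
$$
\int_{D_0}(L_\psi u)\,w\,d\mu_{g_0}+\int_{\partial D_0}\eta\,\partial_{\nu_{g_0}}u\,d\sigma_{g_0}=0\qquad\text{for all }u\in H^2(D_0).
$$
Suppose $w\in H^2(D_0)$ satisfies \eqref{eq1:statteo1sect5}. Then \eqref{eq1:statlem1sect5} reduces the first integral to $-\int_{\partial D_0}\frac{w}{W_\psi}\partial_{\nu_{g_0}}u\,d\sigma_{g_0}$, because $L_\psi^*w=0$ and $\partial_{\nu_{g_0}}w=0$. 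Since $\eta=w/W_\psi$, this cancels exactly with the second integral.

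For (i)$\Rightarrow$(ii) I would proceed in three steps.

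\emph{Step 1 (interior equation).} Take $u\in C_c^\infty(D_0)$, so the boundary term vanishes. We get $\int_{D_0}(L_\psi u)w=0$, which says $L_\psi^*w=0$ in the sense of distributions in $D_0$.

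\emph{Step 2 (regularity up to the boundary).} This is the main obstacle: $w$ is only assumed to be in $L^2$, so it is a very weak solution of a transposed problem. I would first prove that $w$ is a weak $H^1$ solution of an oblique/Neumann-type problem by a duality argument.

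First choose $u$ with $\partial_{\nu_{g_0}}u=0$ on $\partial D_0$. Then $\int_{D_0}(L_\psi u)w=0$ for all such $u$.

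Next, for $\tau$ large, the operator $L_\psi-\tau$ with homogeneous Neumann condition is an isomorphism from $\{u\in H^2:\partial_{\nu_{g_0}}u=0\}$ onto $L^2(D_0)$. This follows from $H^2$ regularity for the oblique problem, as in the proof of Proposition \ref{prop:Fredholm-general-parameter} but in the $L^2$ setting. Consequently $w$ equals the $L^2$-adjoint applied to $\tau w$, and $w$ is the unique solution of the adjoint Neumann problem.

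Concretely, let $z\in H^1(D_0)$ be the weak solution of the coercive problem
$$
\int_{D_0}\Bigl(\langle A_\psi\nabla z,\nabla v\rangle-\langle b_\psi,\nabla v\rangle z-c_\psi zv+\tau zv\Bigr)=\int_{D_0}\tau w v\qquad\text{for all }v\in H^1(D_0),
$$
which is the weak form of $(L_\psi^*-\tau)z=-\tau w$ with conormal condition $\langle A_\psi\nabla z-b_\psi z,\nu\rangle=0$. By \eqref{eq3:lem1sect5} and \eqref{eq4:lem1sect5} this conormal condition is just $\partial_\nu z=0$. Elliptic regularity gives $z\in H^2$; here the coefficient $b_\psi\in C^{1,\alpha}$ allows expanding $\divg(b_\psi z)$. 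Lemma \ref{lem1:sect5} then gives $\int_{D_0}(L_\psi-\tau)u\,(z-w)=0$ for all Neumann $u\in H^2$. Surjectivity of $L_\psi-\tau$ onto $L^2$ yields $w=z\in H^2(D_0)$.

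\emph{Step 3 (boundary conditions).} With $w\in H^2$ and $L_\psi^*w=0$, insert a general $u\in H^2$ into (i) and use \eqref{eq1:statlem1sect5}. This gives
$$
\int_{\partial D_0}\frac{u}{W_\psi}\,\partial_\nu w+\int_{\partial D_0}\Bigl(\eta-\frac{w}{W_\psi}\Bigr)\partial_\nu u=0 .
$$
The pair of traces $(u|_{\partial D_0},\partial_\nu u)$ ranges over a dense subset of $L^2(\partial D_0)^2$; indeed it covers all of $H^{3/2}\times H^{1/2}$ by the trace theorem. Hence $\partial_\nu w=0$ and $\eta=w/W_\psi$, using $W_\psi>0$.

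The main delicacy is Step 2: justifying the $L^2$ isomorphism for large $\tau$ and the $H^2$ regularity of the weak adjoint solution on a $C^{3,\alpha}$ domain with $C^{1,\alpha}$ coefficients. Both should follow from standard $L^2$ theory (Agmon--Douglis--Nirenberg / Lions--Magenes).
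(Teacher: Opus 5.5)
Your proof is correct and is essentially the paper's argument: the paper also combines the Green identity of Lemma~\ref{lem1:sect5} with unique $H^2$ solvability of the $\tau$-shifted Neumann problems for $L_\psi$ and $L_\psi^*$ to show that $w$ is the $H^2$ Neumann solution of $L_\psi^*w=0$, and then uses the trace theorem to identify $\eta=w/W_\psi$. The paper gets that solvability from Lieberman's $W^{2,p}$ oblique-derivative theorem, whereas for the adjoint problem you use Lax--Milgram plus $H^2$ regularity. One small slip: in the paper's convention $L_\psi$ has principal part $-\divg_{g_0}(A_\psi\nabla_{g_0}\cdot)$, so the coercive shift is $L_\psi+\tau$, and your bilinear form actually encodes $(L_\psi^*+\tau)z=\tau w$, not $(L_\psi^*-\tau)z=-\tau w$.
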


\begin{proof}
Let $\psi\in C^{2,\alpha}(\overline{D_0})$ satisfy \eqref{eq:Neumpsi}. Following \cite[Chapter 2, Sect. 8.4]{LionsMagenes}, we define the linear
operator
$$
\mathcal L:\mathcal D(\mathcal L)\subset L^2(D_0) \longrightarrow L^2(D_0), \qquad \mathcal Lu:=L_\psi u,
$$
with domain
$$
\mathcal D(\mathcal L):= \left\{u\in H^2(D_0);\, \partial_{\nu_{g_0}}u=0\text{ on }\partial D_0 \right\}.
$$
The domain $\mathcal D(\mathcal L)$ is dense in $L^2(D_0)$, since $C_c^\infty(D_0)\subset\mathcal D(\mathcal L)$ and $C_c^\infty(D_0)$ is
dense in $L^2(D_0)$. We refer to $\mathcal L$ as the $L^2$-realization of $L_\psi$ under the homogeneous Neumann boundary condition.

We divide the proof into three steps.

\medskip

\noindent
\textbf{Step 1}: Let $\mathcal L^\prime$ be the adjoint of
$\mathcal L$. For $w\in L^2(D_0)$, we have
\begin{equation}\label{eq:kerLstar-selfcontained}
w\in\ker(\mathcal L^\prime)
\iff
w\in H^2(D_0)\text{ and }
\begin{cases}
L_\psi^*w=0 &\text{in }D_0,\\
\partial_{\nu_{g_0}}w=0 &\text{on }\partial D_0.
\end{cases}
\end{equation}

To prove \eqref{eq:kerLstar-selfcontained}, consider the realization in
$L^2(D_0)$ of the formal adjoint $L_\psi^*$, namely
$$
\widehat{\mathcal L}:\mathcal D(\widehat{\mathcal L})\subset L^2(D_0)\longrightarrow L^2(D_0), \qquad \widehat{\mathcal L}w:=L_\psi^*w,
$$
with domain
$$
\mathcal D(\widehat{\mathcal L}):= \left\{w\in H^2(D_0);\, \partial_{\nu_{g_0}}w=0\text{ on }\partial D_0 \right\}.
$$
We show that
$$
\widehat{\mathcal L}=\mathcal L^\prime.
$$
We first claim that there exists $\tau_0>0$ such that, for every
$\tau\geq\tau_0$, the shifted realizations
\begin{equation}\label{eq:shiftedproblems}
\mathcal L_\tau:=\mathcal L+\tau I \qquad\text{and}\qquad \widehat{\mathcal L}_\tau:=\widehat{\mathcal L}+\tau I
\end{equation}
are bijections from their domains onto $L^2(D_0)$, where $I$ denotes the
identity map.

To prove the claim, we work in the chart given by the stereographic projection $\pi$ from a pole $p_*\in \SN\setminus Q$ (see \eqref{eq:defsterproj}) and we define $\Omega_0$ as in \eqref{eq:defomega0}. In local coordinates, the pairs
$(L_\psi,\partial_{\nu_{g_0}})$ and $(L_\psi^*,\partial_{\nu_{g_0}})$
are represented, respectively, by
$$
(\widetilde{L_\psi},\widetilde M) \qquad\text{and}\qquad (\widetilde{L_\psi^*},\widetilde M),$$
where $\widetilde{L_\psi}$ and $\widetilde{L_\psi^*}$ are uniformly elliptic second-order operators of the form
$$
\begin{aligned}
\widetilde{L_\psi}v&=-\widetilde a^{ij}\partial_{ij}v+\widetilde b^i\partial_iv+\widetilde cv,\\
\widetilde{L_\psi^*}v&=-\widetilde a^{ij}\partial_{ij}v+\widetilde b_1^i\partial_iv+\widetilde c_1v,
\end{aligned}
$$
with
$$
\widetilde a^{ij}\in C^{1,\alpha}(\overline{\Omega_0}), \qquad \widetilde b^i,\widetilde c, \widetilde b_1^i,\widetilde c_1 \in 
C^{0,\alpha}(\overline{\Omega_0})
$$
for every $i,j=1,\ldots,N-1$, and $\widetilde M$ is a first-order uniformly oblique boundary operator of the form \eqref{eq:Mpsiform}, with
$$
\widetilde\beta \in C^{1,\alpha}(\partial\Omega_0,\mathbb R^{N-1}).
$$

Since $\partial\Omega_0$ is of class $C^{3,\alpha}$, in particular $\partial\Omega_0$ is of class $C^{1,\gamma}$ for every
$\gamma\in(0,1)$. Choose $\gamma\in(1/2,1)$. Then
$$
2<\frac{1}{1-\gamma}.
$$
We may therefore apply \cite[Theorem~6.29]{LiebermanODP}, with $p=2$, taking into account the opposite sign conventions for the elliptic operator and the orientation of the normal vector. From this we obtain a
constant $\tau_0>0$ such that, for every $\tau\geq\tau_0$ and every
$f\in L^2(\Omega_0)$, each of the problems
$$
\begin{cases}
\widetilde{L_\psi}v+\tau v=f&\text{in }\Omega_0,\\
\widetilde Mv=0&\text{on }\partial\Omega_0,
\end{cases}
$$
and
$$
\begin{cases}
\widetilde{L_\psi^*}v+\tau v=f &\text{in }\Omega_0,\\
\widetilde Mv=0 &\text{on }\partial\Omega_0
\end{cases}
$$
has a unique solution in $H^2(\Omega_0)$. If $v$ is a solution on $\Omega_0$, then $u:=v\circ\pi$ is the corresponding solution on $D_0$,
and vice versa. Hence, for every $\tau\geq\tau_0$, both operators in \eqref{eq:shiftedproblems} are bijections from their domains onto
$L^2(D_0)$, which proves the claim.

For every $u\in\mathcal D(\mathcal L)$ and
$w\in\mathcal D(\widehat{\mathcal L})$, we have
$$
\partial_{\nu_{g_0}}u=\partial_{\nu_{g_0}}w=0 \qquad\text{on }\partial D_0.
$$
Thus, by \eqref{eq1:statlem1sect5}, it holds
$$
\int_{D_0}(L_\psi u)w\,d\mu_{g_0}=\int_{D_0}u(L_\psi^*w)\,d\mu_{g_0}.
$$
Adding
$$
\tau\int_{D_0}uw\,d\mu_{g_0}
$$
to both sides gives
\begin{equation}\label{eq:adjrelation}
(\mathcal L_\tau u,w)_{L^2(D_0)}=(u,\widehat{\mathcal L}_\tau w)_{L^2(D_0)}
\end{equation}
for every $u\in\mathcal D(\mathcal L)$ and
$w\in\mathcal D(\widehat{\mathcal L})$.

Since $\mathcal D(\mathcal L_\tau)=\mathcal D(\mathcal L)$ is dense in
$L^2(D_0)$, the adjoint
$$
\mathcal L_\tau^\prime: \mathcal D(\mathcal L_\tau^\prime)\subset L^2(D_0) \longrightarrow L^2(D_0)
$$
is well defined, with domain
$$
\begin{aligned}
\mathcal D(\mathcal L_\tau^\prime):=\bigl\{
&w\in L^2(D_0);\, \text{there exists }z\in L^2(D_0)\text{ such that}\\
&(\mathcal L_\tau u,w)_{L^2(D_0)}=(u,z)_{L^2(D_0)} \text{ for every }u\in\mathcal D(\mathcal L)\bigr\}.
\end{aligned}
$$
Such $z$ is necessarily unique, and one sets
$\mathcal L_\tau^\prime w:=z$. By \eqref{eq:adjrelation},
$$
\mathcal D(\widehat{\mathcal L}) \subset \mathcal D(\mathcal L_\tau^\prime)
$$
and
\begin{equation}\label{eq:LLstar}
\mathcal L_\tau^\prime v=\widehat{\mathcal L}_\tau v \qquad\text{for every }v\in\mathcal D(\widehat{\mathcal L}).
\end{equation}

We now prove the reverse inclusion. Let
$v\in\mathcal D(\mathcal L_\tau^\prime)$ and set
$$
f:=\mathcal L_\tau^\prime v\in L^2(D_0).
$$
Since $\widehat{\mathcal L}_\tau$ is onto, there exists
$w\in\mathcal D(\widehat{\mathcal L})$ such that
$$
\widehat{\mathcal L}_\tau w=f.
$$
By \eqref{eq:LLstar},
\begin{equation}\label{eq:LtildecalLtilde}
\mathcal L_\tau^\prime w=f=\mathcal L_\tau^\prime v.
\end{equation}
Since $\mathcal L_\tau$ is surjective,
$$
\ker(\mathcal L_\tau^\prime)=\operatorname{Range}(\mathcal L_\tau)^\perp=\{0\}.
$$
Therefore \eqref{eq:LtildecalLtilde} implies that $v=w$. In particular,
$v\in\mathcal D(\widehat{\mathcal L})$, and hence
$$
\mathcal L_\tau^\prime=\widehat{\mathcal L}_\tau.
$$
Finally,
$$
\mathcal L_\tau^\prime=(\mathcal L+\tau I)^\prime=\mathcal L^\prime+\tau I,
$$
and therefore
$$
\mathcal L^\prime=\widehat{\mathcal L}.
$$
This proves \eqref{eq:kerLstar-selfcontained}.

\medskip

\noindent
\textbf{Step 2}: $(i)\Rightarrow(ii)$.\\

Assume that
$$
(w,\eta)\in\ker\bigl((D_\psi F(\psi,\Id))'\bigr).
$$
By the definition of adjoint and the Riesz identification, this
means that
$$
\bigl(D_\psi F(\psi,\Id)u,(w,\eta)\bigr)_{L^2(D_0)\times L^2(\partial D_0)}=0 \qquad\text{for every }u\in H^2(D_0),
$$
that is,
\begin{equation}\label{eq1:step2teo1sect5}
\int_{D_0}(L_\psi u)w\,d\mu_{g_0}+\int_{\partial D_0}\eta\,\partial_{\nu_{g_0}}u\,d\sigma_{g_0}=0
\qquad\forall u\in H^2(D_0).
\end{equation}
If $u\in\mathcal D(\mathcal L)$, then
$\partial_{\nu_{g_0}}u=0$ on $\partial D_0$, and hence
$$
(\mathcal Lu,w)_{L^2(D_0)}=\int_{D_0}(L_\psi u)w\,d\mu_{g_0}=0=(u,0)_{L^2(D_0)}.
$$
By the definition of the Hilbert adjoint,
$$
w\in\mathcal D(\mathcal L^\prime)\qquad\text{and}\qquad \mathcal L^\prime w=0.
$$
Thus $w\in\ker(\mathcal L^\prime)$, and \textit{Step 1} gives
$$
w\in H^2(D_0), \qquad L_\psi^*w=0\text{ in }D_0, \qquad \partial_{\nu_{g_0}}w=0\text{ on }\partial D_0.
$$
Since $w\in H^2(D_0)$, then by \eqref{eq1:statlem1sect5} and \eqref{eq1:step2teo1sect5}, we obtain
$$
0=\int_{D_0}u(L_\psi^*w)\,d\mu_{g_0}+\int_{\partial D_0}\frac{u}{W_\psi}\partial_{\nu_{g_0}}w\,d\sigma_{g_0}+
\int_{\partial D_0} \left(\eta-\frac{w}{W_\psi}\right) \partial_{\nu_{g_0}}u\,d\sigma_{g_0} \qquad \forall u\in H^2(D_0).
$$
Therefore
$$
\int_{\partial D_0}\left(\eta-\frac{w}{W_\psi}\right)\partial_{\nu_{g_0}}u\,d\sigma_{g_0}=0\qquad \forall u\in H^2(D_0).
$$
Now, recall that the normal trace map
$$
H^2(D_0)\longrightarrow H^{1/2}(\partial D_0), \qquad u\longmapsto\partial_{\nu_{g_0}}u,
$$
is bounded and surjective: this follows from \cite[Theorem 1.5.1.2]{Grisvard}, applied in the chart $\pi$, taking into account that $\pi$ is conformal. Then, as $H^{1/2}(\partial D_0)$ is dense in $L^2(\partial D_0)$ and $\eta-\frac{w}{W_\psi}\in L^2(\partial D_0)$, we conclude that
$$
\eta=\frac{w}{W_\psi} \qquad\text{a.e. on }\partial D_0.
$$
Thus (ii) holds.

\medskip

\noindent
\textbf{Step 3}: $(ii)\Rightarrow(i)$.\\

Assume that $w\in H^2(D_0)$ and that \eqref{eq1:statteo1sect5} holds. Then, from \eqref{eq1:statlem1sect5} and \eqref{eq1:statteo1sect5}, we obtain
$$
\int_{D_0}(L_\psi u)w\,d\mu_{g_0}+\int_{\partial D_0}\eta\,\partial_{\nu_{g_0}}u\,d\sigma_{g_0}=0\qquad \forall u\in H^2(D_0).
$$
By definition (see \eqref{eq:defscalarprodL2L2}) this means that
$$
\bigl(D_\psi F(\psi,\Id)u,(w,\eta)\bigr)_{L^2(D_0)\times L^2(\partial D_0)}=0 \qquad\forall u\in H^2(D_0),
$$
that is,
$$
(w,\eta)\in\ker\bigl((D_\psi F(\psi,\Id))'\bigr).
$$
Thus (i) holds and the proof is complete.
\end{proof}

\begin{corollary}\label{cor:regwKernel}
Under the assumptions of Theorem \ref{theo1sect5}, if
$$
(w,\eta)\in\ker\bigl((D_\psi F(\psi,\Id))'\bigr),
$$
then
$$
(w,\eta)\in C^{2,\alpha}(\overline{D_0}) \times C^{1,\alpha}(\partial D_0).
$$
\end{corollary}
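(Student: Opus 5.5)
By Theorem \ref{theo1sect5}, $w\in H^2(D_0)$ solves $L_\psi^*w=0$ in $D_0$ with $\partial_{\nu_{g_0}}w=0$ on $\partial D_0$, and $\eta=w/W_\psi$ on $\partial D_0$. The plan is to bootstrap the regularity of $w$ using the known coefficient regularity and Schauder theory for oblique problems; the statement about $\eta$ will then follow. There is one obstacle: in nondivergence form the adjoint $L_\psi^*$ has coefficients $\widetilde b_1^i,\widetilde c_1$ involving derivatives of $A_\psi$ and $b_\psi$. Since $\psi\in C^{2,\alpha}$, these are only $C^{0,\alpha}$. This is exactly what was recorded in the proof of Theorem \ref{theo1sect5}, and it suffices for Schauder estimates. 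No derivative of $\psi$ beyond second order enters, because $b_\psi$ involves only first derivatives of $\psi$, so $\divg_{g_0}(b_\psi w)$ involves at most second derivatives of $\psi$.

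\textbf{Step 1: Sobolev bootstrap to $C^{1,\beta}$.} Work in the chart $\pi$ on $\Omega_0$, where the problem reads $\widetilde{L_\psi^*}v=0$, $\widetilde M v=0$, with $\widetilde a^{ij}\in C^{1,\alpha}$, lower-order coefficients in $C^{0,\alpha}$, and $\widetilde\beta\in C^{1,\alpha}$. For $\tau\ge\tau_0$, rewrite the problem as $\widetilde{L_\psi^*}v+\tau v=\tau v$. The $W^{2,p}$ theory for oblique derivative problems (\cite[Theorem 6.29]{LiebermanODP}, valid for $p<1/(1-\gamma)$ with $\gamma$ arbitrarily close to $1$ since $\partial\Omega_0\in C^{3,\alpha}$) gives unique solvability in $W^{2,p}$. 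Iterating via Sobolev embeddings, $v\in H^2\subset L^{p_1}$ implies $v\in W^{2,p_1}$, and so on, until $v\in W^{2,p}$ with $p>N-1$.

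Uniqueness is what upgrades the given $H^2$ solution at each stage. The $W^{2,p}$ solution with right-hand side $\tau v$ is also an $H^2$ solution, and $H^2$ solutions are unique by the bijectivity proved in Theorem \ref{theo1sect5}, so the two coincide. The bootstrap therefore yields $v\in C^{1,\beta}(\overline{\Omega_0})$.

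\textbf{Step 2: Schauder step.} Now the right-hand side $\tau v$ lies in $C^{0,\alpha}$. The shifted operator is an isomorphism $C^{2,\alpha}\to C^{0,\alpha}\times C^{1,\alpha}$ (\cite[Theorem 2.27]{LiebermanODP}, as in Proposition \ref{prop:Fredholm-general-parameter}), so it produces a $C^{2,\alpha}$ solution with the same data. By uniqueness in $H^2$ this solution equals $v$, hence $w\in C^{2,\alpha}(\overline{D_0})$.

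\textbf{Step 3: regularity of $\eta$.} Since $w\in C^{2,\alpha}$, its trace satisfies $w|_{\partial D_0}\in C^{2,\alpha}(\partial D_0)$. Since $\psi\in C^{2,\alpha}$, we have $W_\psi\in C^{1,\alpha}$ with $W_\psi\ge1$, so $1/W_\psi\in C^{1,\alpha}$. Therefore $\eta=w/W_\psi\in C^{1,\alpha}(\partial D_0)$.
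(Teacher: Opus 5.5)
Your proposal is correct and follows the paper's route. You run an $L^p$ bootstrap via \cite[Theorem 6.29]{LiebermanODP} on the shifted adjoint problem, identifying each new solution with $w$ through the $H^2$ uniqueness from Step 1 of Theorem \ref{theo1sect5}; then comes a Schauder step and the product estimate for $\eta=w/W_\psi$. Going up to $C^{1,\beta}$ is harmless, though $C^{0,\alpha}$ already suffices.

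One citation needs fixing. The isomorphism obtained from \cite[Theorem 2.27]{LiebermanODP} in Proposition \ref{prop:Fredholm-general-parameter} shifts the boundary operator too, namely to $\widetilde M v+\tau v$. So it does not directly cover the problem with $\widetilde M v=0$. For that problem, invoke \cite[Corollary 4.41]{LiebermanODP} as the paper does, or argue that $(\widetilde{L_\psi^*}+\tau,\widetilde M)$ is Fredholm of index zero on $C^{2,\alpha}$ and injective by the same $H^2$ uniqueness, hence an isomorphism.
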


\begin{proof}
By Theorem \ref{theo1sect5} we have $w\in H^2(D_0)$, $\eta=w/W_\psi$ on $\partial D_0$ in the sense of traces, and
\begin{equation}\label{eq:adjoint-homogeneous-problem}
L_\psi^*w=0\ \text{ in }D_0,\qquad \partial_{\nu_{g_0}}w=0\ \text{ on }\partial D_0 ,
\end{equation}
where $L_\psi^*$ is defined by \eqref{eq:formaladjointLpsi}. Given $\tau>0$, by \eqref{eq:adjoint-homogeneous-problem} the function $w$ solves
\begin{equation}\label{eq1:cor1sect5}
L_\psi^*u+\tau u=\tau w\ \text{ in }D_0,\qquad \partial_{\nu_{g_0}}u=0\ \text{ on }\partial D_0.
\end{equation}
On the other hand, arguing as in \textit{Step 1} of Theorem \ref{theo1sect5}, for $\tau$ large \eqref{eq1:cor1sect5} has a unique solution
$u\in W^{2,p}(D_0)$ whenever $w\in L^p(D_0)$, $2\leq p<\infty$ (see \cite[Theorem 6.29]{LiebermanODP}, taking into account that $\partial D_0$ is of class $C^{1,\gamma}$ for every $\gamma\in(0,1)$). Moreover \eqref{eq1:cor1sect5} has a unique solution
$u\in C^{2,\alpha}(\overline{D_0})$ whenever $w\in C^{0,\alpha}(\overline{D_0})$ (see \cite[Corollary 4.41]{LiebermanODP}). Since both classes are contained in $H^2(D_0)$, where \eqref{eq1:cor1sect5} is uniquely solvable by \textit{Step 1} of Theorem \ref{theo1sect5}, in either
case $u=w$, and therefore the following two implications hold
\begin{equation}\label{eq:BI}
w\in L^p(D_0)\ \Rightarrow\ w\in W^{2,p}(D_0), \qquad w\in C^{0,\alpha}(\overline{D_0})\ \Rightarrow\ w\in C^{2,\alpha}(\overline{D_0}).
\end{equation}
We then conclude by a standard bootstrap argument. Namely, starting from $w\in H^2(D_0)$ and iterating the first implication in \eqref{eq:BI}
together with the Sobolev embedding, after finitely many steps we get that $w\in W^{2,q}(D_0)$, for some $q>\frac{N-1}{2-\alpha}$, whence
$w\in C^{0,\alpha}(\overline{D_0})$ by the Sobolev--Morrey embedding. 
Then, by the second implication in \eqref{eq:BI} we deduce that $w\in C^{2,\alpha}(\overline{D_0})$. Finally, as $W_\psi$ is positive and of class $C^{1,\alpha}$, because $\psi\in C^{2,\alpha}(\overline{D_0})$, we conclude that $\eta=w/W_\psi\in C^{1,\alpha}(\partial D_0)$. 
The proof is complete.
\end{proof}

\begin{remark}\label{rem:Schauder-lifting}
The same argument as in the proof of Corollary \ref{cor:regwKernel}, with minor modifications, gives the
following regularity property. Assume that $\psi \in C^{2,\alpha}(\overline{D_0})$ satisfies \eqref{eq:Neumpsi}. 
If
$$
(f_1,f_2) \in C^{0,\alpha}(\overline{D_0}) \times C^{1,\alpha}(\partial D_0)
$$
and $u\in H^2(D_0)$ solves
$$
\begin{cases}
L_\psi^*u=f_1&\text{in }D_0,\\
\partial_{\nu_{g_0}}u=f_2 &\text{on }\partial D_0,
\end{cases}
$$
then
$$
u\in C^{2,\alpha}(\overline{D_0}).
$$
The same conclusion holds for the solutions of
$$
\begin{cases}
L_\psi u=f_1&\text{in }D_0,\\
\partial_{\nu_{g_0}}u=f_2 &\text{on }\partial D_0.
\end{cases}
$$
\end{remark}

\begin{remark}\label{rem:H12-realization}
Theorem \ref{theo1sect5} also holds when $L^2(\partial D_0)$ is replaced by $H^{1/2}(\partial D_0)$ in the target space. More precisely, since the normal trace $u\mapsto\partial_{\nu_{g_0}}u$ is bounded from $H^2(D_0)$ into $H^{1/2}(\partial D_0)$ (see \textit{Step 2} of the proof of
Theorem \ref{theo1sect5}), the operator $D_\psi F(\psi,\Id)$ takes values in the smaller space $L^2(D_0)\times H^{1/2}(\partial D_0)$ and remains bounded when regarded as an operator into this space. 
We denote this operator by $\mathbb D_\psi F(\psi,\Id)$, namely
$$
\mathbb D_\psi F(\psi,\Id):H^2(D_0)\longrightarrow L^2(D_0)\times H^{1/2}(\partial D_0), \qquad \mathbb D_\psi F(\psi,\Id)u:=\bigl(L_\psi u,\partial_{\nu_{g_0}}u\bigr).
$$
Its adjoint $(\mathbb D_\psi F(\psi,\Id))'$ is defined on $L^2(D_0)\times H^{-1/2}(\partial D_0)$ with respect to the duality pairing
$$
\bigl\langle (f,g),(w,\eta)\bigr\rangle:=\int_{D_0}fw\,d\mu_{g_0} +\langle g,\eta\rangle_{H^{1/2}(\partial D_0),H^{-1/2}(\partial D_0)}.
$$
We denote by $\iota:L^2(\partial D_0)\hookrightarrow H^{-1/2}(\partial D_0)$ the canonical continuous injection, given by
\beq\label{eq:caninj}
\langle g,\iota\eta\rangle_{H^{1/2}(\partial D_0),H^{-1/2}(\partial D_0)}:=\int_{\partial D_0}g\eta\,d\sigma_{g_0},
\qquad g\in H^{1/2}(\partial D_0),
\eeq
so that the above pairing is compatible with \eqref{eq:defscalarprodL2L2}.
Let $(w,\eta)\in\ker\bigl((\mathbb D_\psi F(\psi,\Id))'\bigr)$. 
Testing with $u\in\mathcal D(\mathcal L)$, for which $\partial_{\nu_{g_0}}u=0$, the first part of \textit{Step 2} of the proof of Theorem \ref{theo1sect5} applies verbatim and gives $w\in H^2(D_0)$, $L_\psi^*w=0$ in $D_0$ and $\partial_{\nu_{g_0}}w=0$ on $\partial D_0$. 
Then \eqref{eq1:statlem1sect5} yields
$$
\bigl\langle\partial_{\nu_{g_0}}u,\ \eta-\iota(w/W_\psi) \bigr\rangle_{H^{1/2}(\partial D_0),H^{-1/2}(\partial D_0)}=0 \qquad\forall u\in H^2(D_0),
$$
and, since the normal trace maps $H^2(D_0)$ onto $H^{1/2}(\partial D_0)$, we
get $\eta=\iota(w/W_\psi)$. Conversely, for $(w,\eta)\in L^2(D_0)\times L^2(\partial D_0)$ the two pairings coincide, so
that 
$$(w,\eta)\in\ker\bigl((D_\psi F(\psi,\Id))'\bigr) \Longleftrightarrow (w,\iota\eta)\in\ker\bigl((\mathbb D_\psi F(\psi,\Id))'\bigr).$$ 
Therefore $\mathrm{id}\times\iota$ maps $\ker\bigl((D_\psi F(\psi,\Id))'\bigr)$ bijectively onto $\ker\bigl((\mathbb D_\psi F(\psi,\Id))'\bigr)$, and, by Corollary \ref{cor:regwKernel}, both kernels are identified with
$$
\Bigl\{\Bigl(w,\frac{w}{W_\psi}\Bigr);\ w\in C^{2,\alpha}(\overline{D_0}),\ L_\psi^*w=0\text{ in }D_0,\ \partial_{\nu_{g_0}}w=0\text{ on }\partial D_0\Bigr\}.
$$
Moreover, $\mathbb D_\psi F(\psi,\Id)$ is a Fredholm operator of index zero.
Indeed, arguing as in \textit{Step 1} of Theorem \ref{theo1sect5}, but allowing an arbitrary boundary datum $g\in H^{1/2}(\partial D_0)$ in \cite[Theorem 6.29]{LiebermanODP}, it follows that, for $\tau$ sufficiently large, the map $u\mapsto\bigl(L_\psi u+\tau u,\partial_{\nu_{g_0}}u\bigr)$ is a bijection of $H^2(D_0)$ onto $L^2(D_0)\times H^{1/2}(\partial D_0)$, and in fact an isomorphism, the continuity of the inverse following from estimate \cite[(6.47)]{LiebermanODP}. 
Since the embedding $H^2(D_0)\hookrightarrow L^2(D_0)$ is compact, $\mathbb D_\psi F(\psi,\Id)$ is a compact perturbation of an isomorphism, and the conclusion follows.
\end{remark}

\section{Proof that $F$ is $\sigma$-proper}
In this section we prove that the map $F$ defined in \eqref{eq:defF} is $\sigma$-proper. More precisely, following \cite[Theorem 5.4-(3)]{Henry}, we prove that the projection
$$
\Pi:F^{-1}(0,0)\longrightarrow\Param, \qquad \Pi(\psi,h):=h,
$$
is $\sigma$-proper, namely, there exists a sequence of subsets 
$M_l\subset F^{-1}(0,0)$, $l\in\mathbb N^+$, such that
$$F^{-1}(0,0)=\bigcup_{l=1}^{\infty}M_l$$
and, for every $l\in\mathbb N^+$, and for any sequence
$$(\psi_n,h_n)\in M_l \qquad\text{such that}\qquad h_n\longrightarrow h\quad\text{in }\Param$$
there exists a subsequence $(\psi_{n_j},h_{n_j})$ converging in $C^{2,\alpha}(\overline{D_0})\times\Param$ to an element $(\psi,h)\in M_l$.
\begin{proposition}\label{prop:sigmaproper}
The projection $\Pi:F^{-1}(0,0)\longrightarrow\Param$ is $\sigma$-proper. 
\end{proposition}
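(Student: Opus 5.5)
We will use the natural exhaustion by norm bounds. For $l\in\mathbb N^+$ set
$$
M_l:=\bigl\{(\psi,h)\in F^{-1}(0,0);\ \|\psi\|_{C^{2,\alpha}(\overline{D_0})}\le l\bigr\}.
$$
Clearly $F^{-1}(0,0)=\bigcup_l M_l$. Fix $l$ and a sequence $(\psi_n,h_n)\in M_l$ with $h_n\to h$ in $\Param$. The goal is a subsequence with $\psi_{n_j}\to\psi$ in $C^{2,\alpha}(\overline{D_0})$, $F(\psi,h)=(0,0)$ and $\|\psi\|_{C^{2,\alpha}}\le l$.

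\textbf{Step 1 (weak compactness and passage to the limit).} By Arzel\`a--Ascoli, the bound $\|\psi_n\|_{C^{2,\alpha}}\le l$ gives a subsequence, still denoted $\psi_n$, with $\psi_n\to\psi$ in $C^{2,\beta}(\overline{D_0})$ for every $\beta<\alpha$. Lower semicontinuity of the H\"older seminorm under uniform convergence of second derivatives gives $\psi\in C^{2,\alpha}$ and $\|\psi\|_{C^{2,\alpha}}\le l$.

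By the proof of Proposition \ref{prop:FC1} we have $g_{h_n}\to g_h$ in $C^{2,\alpha}$. Also, $F$ is continuous on $C^{2,\beta}\times\Param$, because the same Banach algebra argument works with $\beta$ in place of $\alpha$. Hence $F(\psi,h)=\lim F(\psi_n,h_n)=(0,0)$, and so $(\psi,h)\in M_l$ provided the convergence is upgraded to $C^{2,\alpha}$.

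\textbf{Step 2 (upgrade to $C^{2,\alpha}$ convergence, the main obstacle).} Write the equations for $\psi_n$ as linear problems with frozen coefficients, in the chart $\pi$ on $\Omega_0$:
$$
-a^{ij}_n\partial_{ij}\psi_n=f_n\ \text{ in }\Omega_0,\qquad \beta_n\cdot\nabla\psi_n=0\ \text{ on }\partial\Omega_0 .
$$
Here $a^{ij}_n$ comes from the principal part $A_{\psi_n,h_n}$ of the minimal-surface-type operator, expressed in terms of $g_{h_n}$ and $\nabla\psi_n$, and $f_n$ collects the first-order terms and $-(N-1)He^{\psi_n}$ plus $\frac{N-1}{W}$. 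The boundary vector is $\beta_n$, from $\nu_{g_{h_n}}$.

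Subtract the corresponding equation for $\psi$, with coefficients $a^{ij},f,\beta$, and set $u_n:=\psi_n-\psi$. Then
$$
-a^{ij}\partial_{ij}u_n+\tau u_n=(a^{ij}_n-a^{ij})\partial_{ij}\psi_n+(f_n-f)+\tau u_n ,
$$
with boundary condition $\beta\cdot\nabla u_n=(\beta-\beta_n)\cdot\nabla\psi_n$.

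The Schauder estimate for the oblique problem, \cite[Theorem 2.27 / Corollary 4.41]{LiebermanODP} (the shift by $\tau$ makes it an isomorphism, as in Proposition \ref{prop:Fredholm-general-parameter}), bounds $\|u_n\|_{C^{2,\alpha}}$ by the $C^{0,\alpha}$ norm of the right-hand side plus the $C^{1,\alpha}(\partial\Omega_0)$ norm of the boundary datum.

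The delicate point is that $a^{ij}_n-a^{ij}$ depends on $\nabla\psi_n-\nabla\psi$ and $g_{h_n}-g_h$. Both converge to $0$ in $C^{1,\beta}$, hence in $C^{0,\alpha}$. Multiplying by $\partial_{ij}\psi_n$, which is bounded in $C^{0,\alpha}$, makes the product tend to $0$ in $C^{0,\alpha}$. Similarly $f_n-f$ involves only first derivatives of $\psi_n$ and $g_{h_n}$ (through the Christoffel-type terms in the divergence), plus $e^{\psi_n}$, so $f_n-f\to0$ in $C^{0,\alpha}$. The boundary term $(\beta-\beta_n)\cdot\nabla\psi_n$ goes to $0$ in $C^{1,\alpha}$, since $\beta_n\to\beta$ in $C^{2,\alpha}$ by continuity of $h\mapsto\nu_{g_h}$ and $\nabla\psi_n$ is bounded in $C^{1,\alpha}$. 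Finally $\tau u_n\to0$ in $C^{0,\alpha}$.

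Therefore $u_n\to0$ in $C^{2,\alpha}(\overline{D_0})$. Together with $h_n\to h$ this shows that $(\psi_{n},h_{n})\to(\psi,h)\in M_l$ along the subsequence, which proves $\sigma$-properness. The step to check carefully is the bookkeeping showing every coefficient difference is controlled by lower-order norms only. This holds because the principal coefficients depend on $\psi$ only through first derivatives.
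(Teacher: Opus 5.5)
Your proof is correct, but it reaches compactness differently from the paper.

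\textbf{The paper's route.} It never identifies a limit before obtaining compactness. It writes the equation in non-divergence form $-a_n^{ij}\partial_{ij}\psi_n+b_n^k\partial_k\psi_n=f_n$ and proves (Claims I--III) that:
\begin{itemize}
\item $a_n^{ij}$, $b_n^k$ and $f_n$ are bounded in $C^{1,\alpha}$ uniformly in $n$;
\item the ellipticity and obliqueness constants are uniform in $n$;
\item $\beta_n$ is bounded in $C^{2,\alpha}(\partial\Omega_0)$.
\end{itemize}
Since $C^{1,\alpha}\subset C^{0,\alpha'}$ for any $\alpha'\in(\alpha,1)$, the Schauder estimate with exponent $\alpha'$ gives a uniform $C^{2,\alpha'}$ bound. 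The compact embedding $C^{2,\alpha'}\hookrightarrow C^{2,\alpha}$ then yields the convergent subsequence directly, and $F(\psi,h)=(0,0)$ follows afterwards from continuity of $F$.

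\textbf{Your route.} You extract a subsequence converging in the weaker space $C^{2,\beta}$, $\beta<\alpha$, and identify the limit equation there. You then upgrade to $C^{2,\alpha}$ convergence with a Schauder estimate for $u_n=\psi_n-\psi$ with respect to the single frozen limit operator. This works because $a_n^{ij}-a^{ij}$, $f_n-f$ and $(\beta-\beta_n)\cdot\nabla\psi_n$ tend to zero in $C^{0,\alpha}$, $C^{0,\alpha}$ and $C^{1,\alpha}$ respectively: they involve only first derivatives of $\psi_n$, and $g_{h_n}\to g_h$ in $C^{2,\alpha}$.

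\textbf{Comparison.} Both arguments exploit the same one derivative of slack from the quasilinear structure. Yours needs the Schauder estimate only for one fixed operator, so it avoids the uniform-in-$n$ bookkeeping of ellipticity, obliqueness and coefficient norms. In exchange you must first pass to the limit in the weaker topology. The paper's argument gives the slightly stronger information that each $M_l$ is bounded in $C^{2,\alpha'}$.

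\textbf{A minor point.} You do not need the shift $\tau$ to turn the limit problem into an isomorphism. Note also that your boundary operator $\beta\cdot\nabla$ carries no zeroth-order term, unlike the shift $\tau\mathcal K$ used in Proposition \ref{prop:Fredholm-general-parameter}. The a priori estimate $\|u\|_{C^{2,\alpha}}\le C\bigl(\|a^{ij}\partial_{ij}u\|_{C^{0,\alpha}}+\|\beta\cdot\nabla u\|_{C^{1,\alpha}(\partial\Omega_0)}+\|u\|_{C^0}\bigr)$ already suffices, since $u_n\to0$ uniformly by your Step 1.
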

\begin{proof}
For $l\in \N^+$ we denote by $\overline{\mathcal{B}_l(0)}$  the closed ball in $C^{2,\alpha}(\overline{D_0})$ of radius $l$, centered at $0$, namely
$$\overline{\mathcal{B}_l(0)}=\{\psi \in C^{2,\alpha}(\overline{D_0}); \ \|\psi\|_{C^{2,\alpha}(\overline{D_0})} \leq l\},$$ 
and we set $$M_l:=(\overline{\mathcal{B}_l(0)}\times\Param)\cap F^{-1}(0,0).$$
Clearly, it holds that
\beq\label{eq:Fpreimageunion}
F^{-1}(0,0)=\bigcup_{l=1}^\infty M_l.
\eeq
Let us fix $l \in \N^+$, and consider a sequence $(\psi_n,h_n)_n$ in $M_l$ such that $h_n\to h$ in $\Param$, for some $h \in \Param$. By definition, for each $n\in \N$, $\psi_n \in \overline{\mathcal{B}_l(0)}$ is a solution of
\begin{equation}\label{eq:probm}
\begin{cases}
-\divg_{g_{h_n}}\!\left(\dfrac{\nabla_{g_{h_n}}\psi}{\sqrt{1+|\nabla_{g_{h_n}}\psi|_{g_{h_n}}^{2}}}\right)
+ \dfrac{N-1}{\sqrt{1+|\nabla_{g_{h_n}}\psi|_{g_{h_n}}^{2}}} - (N-1)e^{\psi}H = 0 & \text{in } D_0,\\[3pt]
\dfrac{\partial \psi}{\partial \nu_{g_{h_n}}} = 0 & \text{on } \partial D_0.
\end{cases}
\end{equation}
We aim to prove that, up to a subsequence
\beq\label{eq:conpsinpsi}
\psi_n\to\psi \quad \text{in $C^{2,\alpha}(\overline{D_0})$}.
\eeq
We set
\begin{equation}\label{eq0:prop1sect8}
g_n:=g_{h_n},\quad B_n:=\nabla_{g_{n}}\psi_n,\quad W_{n}:= \sqrt{1+|B_n|_{g_{n}}^{2}} , \quad f_n:=(N-1)W_n^2(e^{\psi_n} W_{n} H-1).
\end{equation}
Putting the equation in \eqref{eq:probm} in non-divergence form, after some standard computations we check that $\psi_n$ is also a solution of the following
\begin{equation}\label{eq:probmnondiv}
\begin{cases}
-W_n^2\Delta_{g_{n}} \psi  + D_{g_{n}}^2\psi\, (B_n, B_n)=f_n  & \text{in } D_0,\\[6pt]
\dfrac{\partial \psi}{\partial \nu_{g_{n}}} = 0 & \text{on } \partial D_0.
\end{cases}
\end{equation}
Observe that, once the functions $B_n$, $W_n$ and $f_n$ are fixed, the equation in \eqref{eq:probmnondiv} can be viewed as a linear second-order equation in the unknown $\psi$.

To prove \eqref{eq:conpsinpsi} we pass to local coordinates and apply the regularity results for uniformly elliptic linear equations in
bounded domains of $\R^{N-1}$, with oblique derivative boundary condition (see e.g. \cite[Chapter 2]{LiebermanODP}). 
This will yield a global $C^{2,\alpha}(\overline{D_0})$ estimate for $(\psi_n)_n$, uniform with respect to $n$.

From now on we work in the chart $\pi$ given by the stereographic projection from a pole $p_*\in\SN\setminus Q$ (see \eqref{eq:defsterproj}), and we keep the same notation for functions, vector fields and tensor fields on $\overline{D_0}$
and for their local expressions on $\overline{\Omega_0}$, where $\Omega_0:=\pi(D_0)$.

By the expression of the Laplacian and the Hessian in local coordinates (see \eqref{eq:Laplacianlocalcoord}, \eqref{eq:Hesslocalcoord}) we have
$$
\Delta_{g_n}\psi = g_n^{ij} \left(\partial_{ij}\psi-\Gamma_{ij}^k(g_n)\partial_k\psi\right),\quad D^2_{g_n}\psi(B_n,B_n)=B_n^iB_n^j
\left(\partial_{ij}\psi-\Gamma_{ij}^k(g_n)\partial_k\psi\right),
$$
where the $\{\Gamma_{ij}^k\}_{i,j,k=1,\ldots,N-1}$ are the Christoffel symbols, and by definition (see \eqref{eq0:prop1sect8}) and the expression of the gradient in local coordinates
$$B_n=B_n^i\partial_i, \qquad B_n^i=g_n^{ij}\partial_j\psi_n.$$
Therefore
$$
-W_n^2\Delta_{g_n}\psi +D^2_{g_n}\psi(B_n,B_n)=-a_n^{ij}\partial_{ij}\psi+b_n^k\partial_k\psi,
$$
where
$$a_n^{ij}:=W_n^2 g_n^{ij}-B_n^iB_n^j,\quad b_n^k:=\left(W_n^2g_n^{ij}-B_n^iB_n^j\right) \Gamma_{ij}^k(g_n),$$
and thus, in $\Omega_0$, the PDE in \eqref{eq:probmnondiv} takes the form
\begin{equation}\label{eq:probmnondivlocal}
\widetilde{\mathcal{L}}_n \psi:=-a_n^{ij}\partial_{ij}\psi+b_n^k\partial_k\psi=f_n.
\end{equation}

As a first step we establish uniform H\"older bounds, with respect to $n$, for the coefficients $a_n^{ij}$, $b_n^k$. More precisely we prove the following.\\

\noindent\textbf{Claim I}: There exists a positive constant $C_0$ such that 
$$\sum_{i,j=1}^{N-1} \|a_n^{ij}\|_{C^{1,\alpha}(\overline{\Omega_0})} + \sum_{k=1}^{N-1} \|b_n^k\|_{C^{1,\alpha}(\overline{\Omega_0})} \leq C_0, \quad\forall n\in \N.$$

To prove the claim we begin with the metric $g_n$. To avoid repetitions, all the constants $c_1,\ldots,c_9$ appearing below are positive and independent of $n$ (they may depend on $l$, which is fixed).
Recalling that $g_n=h_n^*g_0$, we write
$$
g_n=(g_n)_{ij}\,dx^i\otimes dx^j.
$$
As seen in the proof of Proposition \ref{prop:FC1}, the maps $h\mapsto g_h$ and $h\mapsto g_h^{-1}$ are
continuous from $\Param$ into $C^{2,\alpha}(\overline{D_0},T^*\SN\otimes T^*\SN)$
and into $C^{2,\alpha}(\overline{D_0},T\SN\otimes T\SN)$, respectively. Since
$h_n\to h$ in $\Param$, in the chart $\pi$ we deduce that
\begin{equation}\label{eq3:prop1sect8}
(g_n)_{ij}\longrightarrow(g_h)_{ij},
\quad
g_n^{ij}\longrightarrow g_h^{ij}
\quad\text{in }C^{2,\alpha}(\overline{\Omega_0}),
\qquad
\|(g_n)_{ij}\|_{C^{2,\alpha}(\overline{\Omega_0})}+\|g_n^{ij}\|_{C^{2,\alpha}(\overline{\Omega_0})}\le c_1
\quad\forall n\in\N,
\end{equation}
for some constant $c_1$. Moreover $g_h$ is a Riemannian metric and $\overline{\Omega_0}$ is compact, so that
$(g_h)_{ij}\xi^i\xi^j\ge c_2|\xi|^2,$ for some constant $c_2$. By \eqref{eq3:prop1sect8}, and since each $g_n$ is a Riemannian metric, we find two constants $c_3,c_4$ such that, for every $x\in\overline{\Omega_0}$,
$\xi\in\R^{N-1}$ and $n\in\N$,
$$c_3|\xi|^2\le (g_n)_{ij}(x)\xi^i\xi^j\le c_4|\xi|^2,$$
where $|\cdot|$ is the Euclidean norm of $\R^{N-1}$. Consequently $G_n(x)=((g_n)_{ij}(x))_{ij}$ and $G_n^{-1}(x)=(g_n^{ij}(x))_{ij}$ are uniformly positive definite in $\overline{\Omega_0}$, and
\begin{equation}\label{eq:unifposdefinvG_n}
\frac1{c_4}|\xi|^2\le g_n^{ij}(x)\xi_i\xi_j\le\frac1{c_3}|\xi|^2 .
\end{equation}

Thanks to \eqref{eq3:prop1sect8} and since $\psi_n \in \overline{\mathcal{B}_l(0)}$, there exists a constant $c_5$ such that for any $i=1,\ldots,N-1$ it holds
\begin{equation}\label{eq5:prop1sect8}
\|B_n^i\|_{C^{1,\alpha}(\overline{\Omega_0})}\le c_5 \qquad  \forall n\in \N.
\end{equation}
In addition, as
$$
|B_n|_{g_n}^2=(g_n)_{ij}B_n^iB_n^j,
$$
and thanks to \eqref{eq3:prop1sect8}, \eqref{eq5:prop1sect8}, we conclude that
\begin{equation}\label{eq6:prop1sect8}
\|W_n\|_{C^{1,\alpha}(\overline{\Omega_0})}\leq c_6  \qquad\forall n\in \N.
\end{equation}
Hence, combining \eqref{eq3:prop1sect8}, \eqref{eq5:prop1sect8} and \eqref{eq6:prop1sect8} we find a constant $c_7$ such that for any $i,j=1,\ldots,N-1$
\beq\label{eq7bis:prop1sect8}
\|a_n^{ij}\|_{C^{1,\alpha}(\overline{\Omega_0})}\leq c_7 \qquad\forall n\in \N.
\eeq
Now, since the Christoffel symbols are given by
$$\Gamma_{ij}^k(g_n)=\frac12 g_n^{ks}\left(\partial_i(g_n)_{js}+\partial_j(g_n)_{is}-\partial_s(g_n)_{ij}\right),$$
(see \eqref{eq:christoffel}), then, from \eqref{eq3:prop1sect8}, we readily deduce that
\begin{equation}\label{eq7:prop1sect8}
\|\Gamma_{ij}^k(g_n)\|_{C^{1,\alpha}(\overline{\Omega_0})}\leq c_8  \qquad\forall n\in \N.
\end{equation}
Finally, as $b_n^k=a_n^{ij}\Gamma_{ij}^k(g_n)$, from \eqref{eq7bis:prop1sect8}
and \eqref{eq7:prop1sect8} we conclude that
$$
\|b_n^k\|_{C^{1,\alpha}(\overline{\Omega_0})}\le c_{9}  \qquad\forall n\in \N,
$$
and \textit{Claim I} is thus proved.\\

As a next step we check that the equation \eqref{eq:probmnondivlocal} is uniformly elliptic in $\Omega_0$ with ellipticity constants independent of $n$. This is the content of the following.\\

\noindent\textbf{Claim II}: There exist two positive constants $\lambda, \Lambda$ such that for any $\xi\in\mathbb R^{N-1}$
$$ \lambda |\xi|^2\leq a_n^{ij}(x)\xi_i\xi_j \leq \Lambda |\xi|^2 \quad \forall x\in \overline{\Omega_0},\ \forall n \in \N.$$

Indeed, for every $\xi=(\xi_1,\ldots,\xi_{N-1})\in\mathbb R^{N-1}$, we have
$$a_n^{ij}\xi_i\xi_j=W_n^2 g_n^{ij}\xi_i\xi_j-B_n^iB_n^j\xi_i\xi_j=W_n^2|\xi|_{g_n}^2-(B_n^i\xi_i)^2,$$
where, by some abuse of notation, we set
$$|\xi|_{g_n}^2:=g_n^{ij}\xi_i\xi_j.$$
In particular, by \eqref{eq6:prop1sect8} and \eqref{eq:unifposdefinvG_n}, we find a positive constant $\Lambda$ such that 
\begin{equation}\label{eq:2aparticular}
a_n^{ij}(x)\xi_i\xi_j \leq W_n^2(x)|\xi|_{g_n(x)}^2\leq \Lambda |\xi|^2\quad  \forall x\in\overline{\Omega_0},\ \forall n \in \N.
\end{equation}
On the other hand, since $W_n^2=1+|B_n|_{g_n}^2$, we have
$$a_n^{ij}\xi_i\xi_j=|\xi|_{g_n}^2+|B_n|_{g_n}^2|\xi|_{g_n}^2-(B_n^i\xi_i)^2.$$
By the Cauchy--Schwarz inequality, associated with the positive definite matrix $G_n(x)$, we get that
$$(B_n^i\xi_i)^2\leq ((g_n)_{ij}B_n^i B_n^j) (g_n^{rs}\xi_r \xi_s)=|B_n|_{g_n}^2|\xi|_{g_n}^2,$$
and, exploiting \eqref{eq:unifposdefinvG_n}, we conclude that there exists a positive constant $\lambda$ such that for any $\xi \in \R^{N-1}$
\begin{equation}\label{eq:2particular}
a_n^{ij}(x)\xi_i\xi_j\geq |\xi|_{g_n(x)}^2\geq \lambda |\xi|^2 \quad \forall x\in\overline{\Omega_0},\ \forall n \in \N,
\end{equation}
and \textit{Claim II} readily follows from \eqref{eq:2aparticular} and \eqref{eq:2particular}.\\

We now consider the boundary operator
$\psi\mapsto\frac{\partial \psi}{\partial \nu_{g_n}}$, appearing in
\eqref{eq:probmnondiv}. Recall that $\nu_{g_n}$ is the outward unit normal
to $\partial D_0$ with respect to the metric $g_n$. Writing in local coordinates
\begin{equation}\label{eq10:prop1sect8}
\nu_{g_n}=\beta_n^i\partial_i ,
\end{equation}
we get that
$$
\frac{\partial\psi}{\partial\nu_{g_n}}=d\psi(\nu_{g_n})=\beta_n^i\partial_i\psi
\qquad\text{on }\partial\Omega_0.
$$
Hence, in local coordinates, the boundary operator
$\psi\mapsto\frac{\partial \psi}{\partial \nu_{g_n}}$ takes the form
$$
\widetilde M_n\psi:=\beta_n\cdot \nabla\psi=\beta_n^i\partial_i\psi,
$$
where $\cdot$, $\nabla$ denote here, respectively, the standard Euclidean scalar product and the gradient in
$\mathbb R^{N-1}$, and
$$
\beta_n(x):=(\beta_n^1(x),\ldots,\beta_n^{N-1}(x)),
\qquad x\in \partial\Omega_0.
$$
In the following we show that $\widetilde M_n$ satisfies a uniformly
oblique boundary condition, with constants independent of $n$, and we
prove a uniform H\"older bound on the coefficients $\beta_n^i$.\\

\noindent\textbf{Claim III}: There exists a positive constant $\chi$ such that
$$
\beta_n(x)\cdot\nu_0(x)\ge \chi
\qquad
\forall x\in\partial\Omega_0,\quad \forall n\in\mathbb N,
$$
where $\nu_0$ is the Euclidean outward unit normal to $\partial\Omega_0$.
Moreover, there exists a positive constant $C_1$ such that
$$
\sum_{i=1}^{N-1} \| \beta_n^i\|_{C^{2,\alpha}(\partial\Omega_0)} \leq C_1
\quad \forall n\in\mathbb N.
$$

To prove the claim, let $\rho\in C^{3,\alpha}(\SN)$ be a defining
function for $D_0$ (see \eqref{eq:deffunctrho}).
On $\partial D_0$, the outward unit normal with respect to the metric
$g_n$ is therefore given by
\begin{equation}\label{eq9:prop1sect8}
\nu_{g_n}=-\frac{\nabla_{g_n}\rho}{|\nabla_{g_n}\rho|_{g_n}}.
\end{equation}
In the stereographic coordinates we still denote by $\rho$ the local
representative $\rho\circ\pi^{-1}$. Thus
$$
\Omega_0=\{\rho>0\},\qquad \partial\Omega_0=\{\rho=0\},\qquad d\rho\neq 0 \quad\text{on }\partial\Omega_0.
$$
Passing \eqref{eq9:prop1sect8} to the stereographic coordinates, and using
$$
\nabla_{g_n}\rho=g_n^{ij}\partial_j\rho\,\partial_i, \qquad |\nabla_{g_n}\rho|_{g_n}^2=g_n^{ij}\partial_i\rho\,\partial_j\rho,
$$
we obtain from \eqref{eq10:prop1sect8} that
\begin{equation}\label{eq11:prop1sect8}
\beta_n^i=-\frac{g_n^{ij}\partial_j\rho}{\left(g_n^{rs}\partial_r\rho\,\partial_s\rho\right)^{1/2}}
\qquad\text{on }\partial\Omega_0.
\end{equation}
Let $\nu_0$ be the Euclidean outward unit normal to $\partial\Omega_0$.
By construction we have
$$
\nu_0:=-\frac{\nabla \rho}{|\nabla \rho|},
$$
and thus
\begin{equation}\label{eq12:prop1sect8}
\beta_n\cdot\nu_0
=\beta_n^i\left(-\frac{\partial_i\rho}{| \nabla \rho|}\right)
=\frac{g_n^{ij}\partial_i\rho\,\partial_j\rho}{\left(g_n^{rs}\partial_r\rho\,\partial_s\rho\right)^{1/2}|\nabla \rho|}
=\frac{\left(g_n^{ij}\partial_i\rho\,\partial_j\rho\right)^{1/2}}{|\nabla \rho|}.
\end{equation}
Thanks to \eqref{eq:unifposdefinvG_n}, we have
$$
g_n^{ij}\partial_i\rho\,\partial_j\rho
\ge
\frac1{c_4}|\nabla\rho|^2
\qquad\text{on }\partial\Omega_0,
$$
and by \eqref{eq12:prop1sect8} we obtain
$$
\beta_n\cdot\nu_0
\ge
\frac1{\sqrt{c_4}}
\qquad\text{on }\partial\Omega_0,
$$
which proves the first part of the claim.

For the uniform H\"older bound on the coefficients
$\beta_n^1,\ldots,\beta_n^{N-1}$, by \eqref{eq3:prop1sect8}, and since the
local representative of $\rho$ is in
$C^{3,\alpha}(\overline{\Omega_0})$, we infer that the numerator in
\eqref{eq11:prop1sect8} is uniformly bounded in
$C^{2,\alpha}(\partial\Omega_0)$. Moreover, since $d\rho\neq 0$ on
$\partial\Omega_0$ and
$$
g_n^{ij}\partial_i\rho\,\partial_j\rho
\ge\frac1{c_4}|\nabla\rho|^2,
$$
the denominator is uniformly bounded away from zero and is uniformly
bounded in $C^{2,\alpha}(\partial\Omega_0)$. Hence there exists a positive
constant $c_{10}$, independent of $n$, such that for $i=1,\dots,N-1$
$$
\|\beta_n^i\|_{C^{2,\alpha}(\partial\Omega_0)}\leq c_{10} \quad \forall n\in\mathbb N,
$$
and \textit{Claim III} is thus proved.\\

We now conclude the proof of the proposition. By construction, $\psi=\psi_n$ solves
$$
\begin{cases}
\widetilde{\mathcal{L}}_n \psi=f_n & \hbox{in $\Omega_0$,}\\
\widetilde M_n\psi=0 & \hbox{on $\partial\Omega_0$,}
\end{cases}
$$
and it is a priori only of class $C^{2,\alpha}(\overline{\Omega_0})$. 
By definition (see \eqref{eq0:prop1sect8}), taking into account \eqref{eq6:prop1sect8}, and since $\psi_n \in \overline{\mathcal{B}_l(0)}$, we have
\begin{equation}\label{eq2:finalPropSect8}
\|f_n\|_{C^{1,\alpha}(\overline{\Omega_0})} + \|\psi_n\|_{C^0(\overline{\Omega_0})}\leq C_2,
\end{equation}
for some positive constant $C_2$, independent of $n$, but depending on $l$. 

Fix $\alpha'\in(\alpha,1)$. By \textit{Claim I}--\textit{Claim III} and \eqref{eq2:finalPropSect8}, the
coefficients $a_n^{ij},b_n^k$ and the datum $f_n$ are bounded in
$C^{1,\alpha}(\overline{\Omega_0})$, hence in
$C^{0,\alpha'}(\overline{\Omega_0})$, $\widetilde{\mathcal{L}}_n$ is uniformly elliptic with ellipticity constants independent of $n$, and the oblique field $\beta_n$ is bounded
in $C^{2,\alpha}(\partial\Omega_0)$, hence in $C^{1,\alpha'}(\partial\Omega_0)$,
uniformly in $n$. Therefore \cite[Theorem 4.40]{LiebermanODP}, applied with
$\alpha'$ in place of $\alpha$ and with the sign and normal conventions already
used above, gives $\psi_n\in C^{2,\alpha'}(\overline{\Omega_0})$ and
\begin{equation}\label{eq1:finalPropSect8}
\|\psi_n\|_{C^{2,\alpha'}(\overline{\Omega_0})} \leq C_3\bigl(\|f_n\|_{C^{0,\alpha'}(\overline{\Omega_0})}
+\|\psi_n\|_{C^0(\overline{\Omega_0})}\bigr)\qquad\forall n\in\N,
\end{equation}
with $C_3$ independent of $n$.

Then, combining \eqref{eq1:finalPropSect8} and \eqref{eq2:finalPropSect8}, by the compact embedding
$C^{2,\alpha'}(\overline{\Omega_0})\hookrightarrow C^{2,\alpha}(\overline{\Omega_0})$,
there exists
$\psi\in C^{2,\alpha}(\overline{\Omega_0})$ such that, up to a subsequence, it holds
$$
\psi_n\to\psi
\qquad\text{in }C^{2,\alpha}(\overline{\Omega_0}).
$$
Hence, we obtain convergence in $C^{2,\alpha}(\overline{D_0})$. 
Since $\psi_n\in \overline{\mathcal B_l(0)}$ and the latter is closed in $C^{2,\alpha}(\overline{D_0})$, we have $\psi\in \overline{\mathcal B_l(0)}$. Moreover, since $h_n\to h$ in $\Param$, and $F$ is continuous, passing to the limit in
$$
F(\psi_n,h_n)=(0,0)
$$
gives
$$
F(\psi,h)=(0,0).
$$
Therefore $(\psi,h)\in M_l$, and we have proved that every sequence
$(\psi_n,h_n)_n\subset M_l$ with $h_n\to h$ in $\Param$ admits a subsequence
converging in $C^{2,\alpha}(\overline{D_0})\times\Param$ to an element of $M_l$. Since $l$ is arbitrary, and \eqref{eq:Fpreimageunion} holds, we conclude that $\Pi$ is $\sigma$-proper.
\end{proof}

\section{Preliminary results for transversality}

Let $\psi\in C^{2,\alpha}(\overline{D_0})$ be a solution of
$$
F(\psi,\Id)=(0,0).
$$
As in Section 5, we work at the identity value of the domain parameter. The case of a general parameter will be reduced to the identity case in
Section 8 (see the proof of Proposition \ref{prop1sect8}).

We consider the bounded extension
$$
D_\psi F(\psi,\Id): H^2(D_0)\longrightarrow L^2(D_0)\times L^2(\partial D_0)
$$
introduced in Section 5, and denote by $(D_\psi F(\psi,\Id))'$ its adjoint, whose kernel $\ker\bigl((D_\psi F(\psi,\Id))'\bigr)$ is a subspace of $L^2(D_0)\times L^2(\partial D_0)$.

The main result of this section is the following.

\begin{theorem}\label{thm1sect7}
Assume that $N-1$ does not belong to the Neumann spectrum of $-\Delta_{g_0}$ on $D_0$.
Let $H>0$, and let $\psi\in C^{2,\alpha}(\overline{D_0})$ be a solution of $F(\psi,\Id)=(0,0)$.
Then, for any $(w,\eta)\in\ker\bigl((D_\psi F(\psi,\Id))'\bigr)$ such that
\beq\label{eq:statementteo1sect7}
\bigl(D_hF(\psi,\Id)[Y],(w,\eta)\bigr)_{L^2(D_0)\times L^2(\partial D_0)}=0,
\eeq
for every $Y\in\Y$, we have $(w,\eta)=(0,0)$.
\end{theorem}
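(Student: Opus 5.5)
By Theorem \ref{theo1sect5} and Corollary \ref{cor:regwKernel}, we may write $(w,\eta)=(w,w/W_\psi)$ with $w\in C^{2,\alpha}(\overline{D_0})$, $L_\psi^*w=0$ in $D_0$ and $\partial_{\nu_{g_0}}w=0$ on $\partial D_0$. So \eqref{eq:statementteo1sect7} becomes
$$
\int_{D_0}D_hF_1(\psi,\Id)[Y]\,w\,d\mu_{g_0}-\int_{\partial D_0}\delta_Yg_0\bigl(\nu_{g_0},\nabla^\top_{g_0}\psi\bigr)\frac{w}{W_\psi}\,d\sigma_{g_0}=0\qquad\forall Y\in\Y,
$$
where we have used $\partial_{\nu_{g_0}}\psi=0$ in Lemma \ref{lem3sect4}. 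The plan is to split $Y$ into a part that merely reparametrizes $D_0$ and a part that genuinely moves $\partial D_0$. This converts the condition into a boundary identity, i.e.\ an extra boundary condition on $w$. That extra condition, combined with $\partial_{\nu_{g_0}}w=0$, should force $w\equiv0$.

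\emph{Step 1 (invariance).} Let $Y\in\Y$ be tangent to $\partial D_0$ along $\partial D_0$, and let $k_t$ be the flow of $Y$. Then $k_t$ preserves $D_0$ and $k_t:(D_0,g_{k_t})\to(D_0,g_0)$ is an isometry. Exactly as in Lemma \ref{lem1:sect2}, this gives $F(\psi\circ k_t,k_t)=F(\psi,\Id)\circ k_t=(0,0)$. Differentiating at $t=0$ yields
$$
D_hF(\psi,\Id)[Y]=-D_\psi F(\psi,\Id)\bigl[d\psi(Y)\bigr].
$$
Because $(w,\eta)$ annihilates the range of $D_\psi F(\psi,\Id)$, the pairing in \eqref{eq:statementteo1sect7} vanishes automatically for such $Y$. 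Hence the condition depends only on the normal component $Y_\nu:=\langle Y,\nu_{g_0}\rangle_{g_0}$ on $\partial D_0$.

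\emph{Step 2 (boundary identity).} Next we take $Y=\zeta\,\widetilde\nu$, where $\widetilde\nu$ is a $C^{3,\alpha}$ extension of $\nu_{g_0}$ supported in a collar of $\partial D_0$ inside $Q$ and $\zeta\in C^{3,\alpha}$ is arbitrary. We compute the pairing via Lemma \ref{lem3sect4}, with $\delta_Yg_0=\mathcal L_Yg_0$. We integrate by parts all divergence terms, using Lemma \ref{lem1:sect5}, $F(\psi,\Id)=0$, $L_\psi^*w=0$ and $\partial_\nu w=\partial_\nu\psi=0$. All interior contributions should cancel by Step 1, leaving a Hadamard-type formula
$$
\int_{\partial D_0}\zeta\,\mathcal B(\psi,w)\,d\sigma_{g_0}=0\qquad\forall\zeta .
$$
Here $\mathcal B$ is a first-order boundary expression in $w$, with coefficients depending on $\psi$ up to second order and on the second fundamental form of $\partial D_0$. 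I expect it to be of the form $\langle\nabla^\top w,\nabla^\top\psi\rangle/W_\psi$ plus $w$ times a coefficient involving $(N-1)(1/W_\psi-He^\psi)$ and curvature terms. This step is the main computational obstacle: it requires a careful bookkeeping of the boundary terms, including those containing $\nabla_\nu Y$ and the tangential derivatives of $Y_\nu$. I would simplify them using the equation $F_1(\psi,\Id)=0$ restricted to $\partial D_0$.

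\emph{Step 3 (conclusion).} From $\mathcal B(\psi,w)=0$ together with $\partial_\nu w=0$ on $\partial D_0$, the aim is to deduce that $w$ has zero Cauchy data on an open piece of $\partial D_0$. Unique continuation for the elliptic operator $L_\psi^*$ (whose coefficients are regular enough) then gives $w\equiv0$, and hence $\eta=w/W_\psi=0$.

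The delicate point is where the coefficient structure of $\mathcal B$ degenerates. The model case is $\psi$ constant, where $L_\psi^*=-\Delta_{g_0}-(N-1)$ and the boundary condition carries no information. In that case $w$ is a Neumann eigenfunction of $-\Delta_{g_0}$ with eigenvalue $N-1$, and the hypothesis $N-1\notin\SpecNeu(-\Delta_{g_0};D_0)$ forces $w=0$. For nonconstant $\psi$, I would first try to show that on the set where $\nabla^\top\psi\ne0$ the boundary condition is a first-order tangential transport equation for $w$. I would then propagate information from points where the zeroth-order coefficient is nonzero, so as to obtain $w=0$ on an open boundary set. If this fails, I would fall back on a more global argument: integrating the boundary identity against suitable test functions built from $\psi$ to reduce again to the spectral case.
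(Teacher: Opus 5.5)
Your Steps 1--2 are a sound alternative to the paper's tensor $\mathcal T_\psi[w]$. The invariance identity plays the role of $\divg_{g_0}\mathcal T_\psi[w]=0$, and the Hadamard formula plays the role of $\mathcal T_\psi[w](\nu_{g_0},\nu_{g_0})=0$. There is one technical point: to use that $(w,\eta)$ annihilates the range of $D_\psi F(\psi,\Id)$ on $H^2(D_0)$, you need $d\psi(Y)\in H^2(D_0)$ and a differentiable curve $t\mapsto\psi\circ k_t$. That requires more than $\psi\in C^{2,\alpha}$; Schauder theory supplies it, or you can verify the identity directly from Lemma~\ref{lem3sect4}, but you must say so.

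More importantly, you never compute $\mathcal B$, and your guess about its form is not right: there are no curvature terms. Once the interior part vanishes, the pairing equals $2\int_{\partial D_0}\mathcal T_\psi[w](\nu_{g_0},Y)\,d\sigma_{g_0}$. Since $\partial_{\nu_{g_0}}\psi=\partial_{\nu_{g_0}}w=0$, the boundary condition is exactly $\divg_{g_0}(w\nabla_{g_0}\psi/W_\psi)=0$ on $\partial D_0$. Using the equation for $\psi$, this reads $\langle\nabla^\top_{g_0}w,\nabla^\top_{g_0}\psi\rangle_{g_0}=q_\psi w$ with $q_\psi:=(N-1)(He^\psi W_\psi-1)$. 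The whole conclusion depends on this precise zeroth-order coefficient, so leaving it open (``plus curvature terms'') is not harmless.

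The genuine gap is Step 3, which states intentions rather than an argument. Along integral curves $\sigma$ of $\nabla^\top_{g_0}\psi$ you have $\frac{d}{dt}w(\sigma(t))=q_\psi(\sigma(t))\,w(\sigma(t))$. ``Propagating from points where $q_\psi\neq0$'' yields nothing by itself. You need $q_\psi$ to keep a strict sign on a whole forward orbit, so that $|w(\sigma(t))|\ge|w(p_0)|e^{ct}$ contradicts the boundedness of $w$.

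The missing idea is to compare $\psi|_{\partial D_0}$ with $\psi_H=\log(1/H)$:
\begin{itemize}
\item If $\psi(p_0)>\psi_H$: $\psi$ increases along the flow of $\nabla^\top_{g_0}\psi$ and $W_\psi\ge1$. Hence $q_\psi\ge(N-1)(He^{\psi(p_0)}-1)>0$ on the forward orbit.
\item If $\min_{\partial D_0}\psi<\psi_H$: at minimum points $\nabla_{g_0}\psi=0$, $W_\psi=1$ and $q_\psi<0$. These are points where $\nabla^\top\psi=0$, which your restriction to $\{\nabla^\top\psi\neq0\}$ would discard. A small sublevel set of $\psi|_{\partial D_0}$ lies where $q_\psi\le-\delta$ and is forward-invariant under the flow of $-\nabla^\top_{g_0}\psi$.
\end{itemize}
In either case $w=0$ on a nonempty open subset of $\partial D_0$. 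Together with $\partial_{\nu_{g_0}}w=0$, unique continuation for $L_\psi^*$ then gives $w\equiv0$.

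The remaining alternative is $\psi\equiv\psi_H$ on $\partial D_0$, and your treatment of it is incomplete. There the boundary identity is vacuous even if $\psi$ is not globally constant. Before invoking $N-1\notin\SpecNeu(-\Delta_{g_0};D_0)$ you must show $\psi\equiv\psi_H$ in $D_0$. The paper does this by applying Calder\'on's uniqueness theorem to $\psi-\psi_H$, which has zero Cauchy data and solves a linear elliptic equation. Your fallback of ``test functions built from $\psi$'' does not replace this step.
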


We begin with two preliminary results which, for a fixed $Y\in\Y$, give two equivalent formulations of the identity in \eqref{eq:statementteo1sect7}.

\begin{lemma}\label{lem1eq:sect7}
Let $\psi\in C^{2,\alpha}(\overline{D_0})$ be a solution of $F(\psi,\Id)=(0,0)$, let $(w,\eta)\in \ker\bigl((D_\psi F(\psi,\Id))'\bigr)$, $Y \in \Y$ and set
$$\delta_Y g_0:=\mathcal{L}_Yg_0,\quad W_\psi:=\sqrt{1+|\nabla_{g_0}\psi|_{g_0}^2},$$
where $\mathcal L_Yg_0$ is the Lie derivative of $g_0$ along $Y$ (see \eqref{eq:defLieDerivative}). 
Then \eqref{eq:statementteo1sect7} holds if and only if
\begin{equation}\label{eq:trasvcondition2}
\begin{array}{lll}
&&\displaystyle -\int_{D_0}\frac{\delta_Y g_0(\nabla_{g_0}\psi,\nabla_{g_0} w)}{W_\psi}\,d\mu_{g_0} + \int_{D_0}\frac{\delta_Y g_0(\nabla_{g_0}\psi,\nabla_{g_0}\psi)}{2W_\psi^3}\,\langle \nabla_{g_0}\psi,\nabla_{g_0} w\rangle_{g_0}\,d\mu_{g_0}\\[12pt]
&&\displaystyle+\frac12\int_{D_0}(\tr_{g_0} \delta_Y g_0)\, \divg_{g_0}\left(\frac{w}{W_\psi}\nabla_{g_0}\psi\right)\,d\mu_{g_0}+\frac{N-1}{2}\int_{D_0}\frac{\delta_Y g_0(\nabla_{g_0}\psi,\nabla_{g_0}\psi)}{W_\psi^3}\,w \,d\mu_{g_0}=0.
\end{array}
\end{equation}
\end{lemma}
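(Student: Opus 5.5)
The identity \eqref{eq:statementteo1sect7} is the vanishing of
$$
\int_{D_0}D_hF_1(\psi,\Id)[Y]\,w\,d\mu_{g_0}+\int_{\partial D_0}D_hF_2(\psi,\Id)[Y]\,\eta\,d\sigma_{g_0}.
$$
I will insert the explicit formulas of Lemma \ref{lem3sect4} and integrate by parts until every term appears in the form \eqref{eq:trasvcondition2}. Two facts are used throughout. First, by Theorem \ref{theo1sect5} and Corollary \ref{cor:regwKernel}, we have $w\in C^{2,\alpha}(\overline{D_0})$, $\partial_{\nu_{g_0}}w=0$ and $\eta=w/W_\psi$ on $\partial D_0$. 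Second, $F(\psi,\Id)=(0,0)$ gives $\partial_{\nu_{g_0}}\psi=0$ on $\partial D_0$, so $\nabla_{g_0}\psi=\nabla^\top_{g_0}\psi$ there. In particular the boundary term collapses to
$$
D_hF_2(\psi,\Id)[Y]=-\delta_Yg_0(\nu_{g_0},\nabla_{g_0}\psi).
$$

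For the interior integral I treat the four terms of $D_hF_1$ separately.

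\emph{First divergence term.} Apply the divergence theorem to $\divg_{g_0}\bigl(W_\psi^{-1}(\delta_Yg_0(\nabla_{g_0}\psi,\cdot))^\sharp\bigr)\,w$. This yields
$$
-\int_{D_0}\frac{\delta_Yg_0(\nabla_{g_0}\psi,\nabla_{g_0}w)}{W_\psi}\,d\mu_{g_0}
$$
plus the boundary term
$$
\int_{\partial D_0}\frac{w}{W_\psi}\,\delta_Yg_0(\nabla_{g_0}\psi,\nu_{g_0})\,d\sigma_{g_0}.
$$
Since $\eta=w/W_\psi$ and $\delta_Yg_0$ is symmetric, this boundary term exactly cancels the contribution of $D_hF_2$.

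\emph{Second divergence term.} The same integration by parts produces
$$
\int_{D_0}\frac{\delta_Yg_0(\nabla_{g_0}\psi,\nabla_{g_0}\psi)}{2W_\psi^3}\,\langle\nabla_{g_0}\psi,\nabla_{g_0}w\rangle_{g_0}\,d\mu_{g_0}.
$$
Its boundary term contains the factor $\langle\nabla_{g_0}\psi,\nu_{g_0}\rangle_{g_0}=\partial_{\nu_{g_0}}\psi=0$, so it vanishes.

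\emph{Trace term.} Write $\langle\nabla_{g_0}(\tr_{g_0}\delta_Yg_0),\nabla_{g_0}\psi\rangle_{g_0}\,w/W_\psi$ as the pairing of the gradient of $\tr_{g_0}\delta_Yg_0$ with the vector field $(w/W_\psi)\nabla_{g_0}\psi$. Integrating by parts moves the derivative onto this field. The boundary term again carries the factor $\partial_{\nu_{g_0}}\psi=0$ and vanishes, and the interior part is
$$
\frac12\int_{D_0}(\tr_{g_0}\delta_Yg_0)\,\divg_{g_0}\Bigl(\frac{w}{W_\psi}\nabla_{g_0}\psi\Bigr)\,d\mu_{g_0},
$$
with the sign required in \eqref{eq:trasvcondition2}.

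\emph{Zero-order term.} The last term of $D_hF_1$ contributes directly. Summing the four contributions gives exactly the left-hand side of \eqref{eq:trasvcondition2}, so \eqref{eq:statementteo1sect7} and \eqref{eq:trasvcondition2} are the same identity and the equivalence follows.

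The only delicate points are regularity and sign bookkeeping. The divergence theorem requires $C^1$ vector fields on $\overline{D_0}$. Here $\psi\in C^{2,\alpha}$, $Y\in C^{3,\alpha}$ (so $\delta_Yg_0\in C^{2,\alpha}$) and $w\in C^{2,\alpha}$, so all fields involved are at least $C^{1}$ and the integrations by parts are legitimate. The step to check carefully is that the boundary contribution from the first divergence term cancels the $F_2$-contribution exactly rather than doubling it, with the outward orientation of $\nu_{g_0}$ fixed consistently.
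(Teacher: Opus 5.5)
Your proposal is correct and follows the paper's proof essentially step by step. You substitute the formulas of Lemma \ref{lem3sect4}, use $\partial_{\nu_{g_0}}\psi=0$ and $\eta=w/W_\psi$ (with $w\in C^{2,\alpha}(\overline{D_0})$ from Corollary \ref{cor:regwKernel}), and integrate each divergence-type term by parts. The boundary term from the first divergence term cancels the $D_hF_2$ contribution, the other two boundary terms vanish, and the equivalence follows because every step is an identity.
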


\begin{proof}
Let $\psi\in C^{2,\alpha}(\overline{D_0})$ be such that $F(\psi,\Id)=(0,0)$, let $(w,\eta)\in \ker\bigl((D_\psi F(\psi,\Id))'\bigr)$, and let $Y \in \Y$. By definition of $(\cdot,\cdot)_{L^2(D_0)\times L^2(\partial D_0)}$, the relation \eqref{eq:statementteo1sect7} means that
\begin{equation}\label{eq:trasvcond}
\int_{D_0}  D_hF_1(\psi,\Id)[Y] \;w\,d\mu_{g_0} +\int_{\partial D_0}  D_hF_2(\psi,\Id)[Y]\;\eta\,d\sigma_{g_0}=0.
\end{equation}
By Lemma \ref{lem3sect4}, writing for simplicity $\nabla \psi$, $\nu$, $\langle\cdot,\cdot\rangle$ instead of $\nabla_{g_0} \psi$, $\nu_{g_0}$, $\langle\cdot,\cdot\rangle_{g_0}$ respectively, we have

\begin{equation}\label{eq:partialdrF}
\begin{array}{lll}
\displaystyle D_hF_1(\psi,\Id)[Y]
&=& \displaystyle
{\divg}_{g_0} \left(\frac{(\delta_Y g_0(\nabla\psi,\cdot))^\sharp}{W_\psi}\right) - {\divg}_{g_0}\left(\frac{\delta_Y g_0(\nabla\psi,\nabla\psi)}{2W_\psi^3}\nabla\psi \right)\\[15pt]
&&\displaystyle- \frac{1}{2W_\psi} \langle\nabla (\tr_{g_0} \delta_Y g_0),\nabla\psi\rangle
+\frac{N-1}{2W_\psi^3}\delta_Y g_0(\nabla\psi,\nabla\psi),\\[15pt]
\displaystyle D_hF_2(\psi,\Id)[Y]
&=& \displaystyle -\delta_Y g_0(\nu,\nabla^\top\psi)-\frac12 \delta_Y g_0(\nu,\nu)\partial_{\nu}\psi,
\end{array}
\end{equation}
where $\bigl(\delta_Yg_0(\nabla_{g_0}\psi,\cdot)\bigr)^\sharp$ is the $g_0$-metric dual vector field of the one-form $\delta_Yg_0(\nabla \psi,\cdot)$ (see \eqref{eq:defomegasharp}).
Since $\psi$ satisfies $F(\psi,\Id)=(0,0)$, in particular, it holds that $\partial_\nu\psi=0$ on $\partial D_0$, and thus

$$D_hF_2(\psi,\Id)[Y]= -\delta_Y g_0(\nu,\nabla^\top\psi),$$
where $\nabla^\top\psi$ denotes the tangential component of $\nabla\psi$ on $\partial D_0$.
Moreover, by Theorem \ref{theo1sect5} and Corollary \ref{cor:regwKernel}, we
have $w\in C^{2,\alpha}(\overline{D_0})$, and $\partial_\nu w=0$, $\eta=\frac{w}{W_\psi}$ on $\partial D_0$. In particular, the boundary integral in \eqref{eq:trasvcond} can be rewritten as
\begin{equation}\label{eq:intterm0}
\int_{\partial D_0}  D_hF_2(\psi,\Id)[Y]\;\eta\,d\sigma_{g_0} = -\int_{\partial D_0} \frac{w}{W_\psi}\,\delta_Y g_0(\nu,\nabla^\top\psi)\,d\sigma_{g_0}.
\end{equation}
For the first integral in \eqref{eq:trasvcond}, by \eqref{eq:partialdrF}, integrating by parts and recalling the definition of $(\delta_Y g_0(\nabla\psi,\cdot))^\sharp$ we have
\begin{equation}\label{eq:intterm1}
\begin{array}{lll}
&&\displaystyle \int_{D_0}\divg_{g_0}\left(\frac1{W_\psi} (\delta_Y g_0(\nabla\psi,\cdot))^\sharp\right)w\,d\mu_{g_0}\\[12pt]
&=& \displaystyle
-\int_{D_0}\frac1{W_\psi}\,\delta_Y g_0(\nabla\psi,\nabla w)\,d\mu_{g_0} + \int_{\partial D_0}\frac{w}{W_\psi}\,\delta_Y g_0(\nabla\psi,\nu)\,d\sigma_{g_0}\\[12pt]
&=& \displaystyle -\int_{D_0}\frac1{W_\psi}\,\delta_Y g_0(\nabla\psi,\nabla w)\,d\mu_{g_0} + \int_{\partial D_0}\frac{w}{W_\psi}\,\delta_Y g_0(\nabla^\top\psi,\nu)\,d\sigma_{g_0},
\end{array}
\end{equation}
where for the last equality we used that $\partial_\nu\psi=0$ on $\partial D_0$, which gives that $\nabla\psi=\nabla^\top\psi$ on $\partial D_0$.

For the second term in the expression of $D_hF_1$, integrating again by parts, taking into account that $\partial_\nu\psi=0$ on $\partial D_0$, we obtain
$$
-\int_{D_0}\divg_{g_0}\left(\frac{\delta_Y g_0(\nabla\psi,\nabla\psi)}{2W_\psi^3}\nabla\psi\right)w\,d\mu_{g_0} = \int_{D_0}\frac{\delta_Y g_0(\nabla\psi,\nabla\psi)}{2W_\psi^3}\,\langle \nabla\psi,\nabla w\rangle\,d\mu_{g_0}.
$$
Similarly, for the trace term, we get that
\begin{equation}\label{eq:intterm3}
-\frac12\int_{D_0}\,\frac{\langle \nabla(\tr_{g_0} \delta_Y g_0),\nabla\psi\rangle}{W_\psi}\, w\, d\mu_{g_0}=\frac12\int_{D_0}(\tr_{g_0} \delta_Y g_0)\, \divg_{g_0}\left(\frac{w}{W_\psi}\nabla\psi\right)\,d\mu_{g_0}.
\end{equation}
Summing up, from \eqref{eq:partialdrF}--\eqref{eq:intterm3}, taking into account that the boundary integrals in \eqref{eq:intterm0} and \eqref{eq:intterm1} cancel, we obtain \eqref{eq:trasvcondition2}. Finally, since all the relations  \eqref{eq:trasvcond}--\eqref{eq:intterm3} are equalities, it is clear that \eqref{eq:trasvcondition2} is equivalent to \eqref{eq:statementteo1sect7} and the proof is complete.
\end{proof}
Thanks to the previous result we can prove another equivalent characterization of \eqref{eq:statementteo1sect7} that will be useful in the sequel of this section. More precisely we have the following.

\begin{lemma}\label{lem2eq:sect7}
Let $\psi\in C^{2,\alpha}(\overline{D_0})$ be a solution of $F(\psi,\Id)=(0,0)$, let $(w,\eta)\in \ker\bigl((D_\psi F(\psi,\Id))'\bigr)$, $Y \in \Y$ and let $\delta_Y g_0$, $W_\psi$ be defined as in Lemma \ref{lem1eq:sect7}. 
Then \eqref{eq:statementteo1sect7} holds if and only if

\begin{equation}\label{eq:trasvcondition3}
\int_{D_0}\langle \mathcal T_\psi[w],\, \delta_Y g_0\rangle_{g_0}\,d\mu_{g_0}=0
\end{equation}
where $\mathcal{T}_\psi[w]$ is the $(0,2)$-tensor field defined by
\begin{equation}\label{eq:tensorfieldTpsi}
\displaystyle \mathcal T_\psi[w]:=\displaystyle-\frac1{W_\psi}\,\operatorname{Sym}(d\psi\otimes dw)
+\frac{\langle \nabla_{g_0}\psi,\nabla_{g_0} w\rangle_{g_0}+(N-1) w}{2W_\psi^3}\,d\psi \otimes d\psi+\frac12\divg_{g_0}\left(\frac{w}{W_\psi}\nabla_{g_0}\psi\right)g_0.
\end{equation}
Moreover
\begin{equation}\label{eq:TwH1}
 \mathcal{T}_\psi[w]\in C^{1,\alpha}(\overline{D_0}\,, T^*\SN\otimes T^*\SN).
\end{equation}
\end{lemma}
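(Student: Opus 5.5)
The plan is to start from Lemma \ref{lem1eq:sect7}, which already reduces \eqref{eq:statementteo1sect7} to the vanishing of the sum of four integrals in \eqref{eq:trasvcondition2}. I will then show, term by term, that each integrand is the pointwise $g_0$-inner product of $\delta_Yg_0$ with a suitable symmetric $(0,2)$-tensor. Summing these tensors should give exactly $\mathcal T_\psi[w]$ in \eqref{eq:tensorfieldTpsi}. Since $\delta_Yg_0=\mathcal L_Yg_0$ is symmetric, the two formulations \eqref{eq:trasvcondition2} and \eqref{eq:trasvcondition3} are then literally the same identity for each fixed $Y$, so the equivalence follows.

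The pointwise identities I will use are the following, for a symmetric $(0,2)$-tensor $S$ (here $S=\delta_Yg_0$) and vector fields $X,Z$:
\[
S(X,Z)=\langle S,\,X^\flat\otimes Z^\flat\rangle_{g_0}=\langle S,\operatorname{Sym}(X^\flat\otimes Z^\flat)\rangle_{g_0},\qquad \tr_{g_0}S=\langle S,g_0\rangle_{g_0}.
\]
The second equality in the first identity uses the symmetry of $S$. With $(\nabla_{g_0}\psi)^\flat=d\psi$ and $(\nabla_{g_0}w)^\flat=dw$, the terms transform as follows.
\begin{itemize}
\item The first integrand becomes $-W_\psi^{-1}\langle\delta_Yg_0,\operatorname{Sym}(d\psi\otimes dw)\rangle_{g_0}$.
\item The second and fourth integrands combine into $\frac{\langle\nabla\psi,\nabla w\rangle+(N-1)w}{2W_\psi^3}\langle\delta_Yg_0,d\psi\otimes d\psi\rangle_{g_0}$.
\item The trace term becomes $\frac12\divg_{g_0}\bigl(\frac{w}{W_\psi}\nabla\psi\bigr)\langle\delta_Yg_0,g_0\rangle_{g_0}$.
\end{itemize}
Collecting these gives $\langle\mathcal T_\psi[w],\delta_Yg_0\rangle_{g_0}$ pointwise, and integrating over $D_0$ yields \eqref{eq:trasvcondition3}. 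I will also make sure the convention for $\operatorname{Sym}$, namely $\frac12(a\otimes b+b\otimes a)$, agrees with the appendix so that no factor of $2$ is lost.

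For the regularity claim \eqref{eq:TwH1}, I will invoke Corollary \ref{cor:regwKernel}, which gives $w\in C^{2,\alpha}(\overline{D_0})$. Since $\psi\in C^{2,\alpha}$, the quantities $d\psi$, $dw$ and $W_\psi$ are $C^{1,\alpha}$, and $W_\psi\ge1$, so $W_\psi^{-1}$ and $W_\psi^{-3}$ are $C^{1,\alpha}$. The divergence term needs more care: $\divg_{g_0}\bigl(\frac{w}{W_\psi}\nabla\psi\bigr)$ a priori only lies in $C^{0,\alpha}$, because it involves second derivatives of $\psi$ and the first derivatives of $W_\psi$, which again contain $D^2\psi$.

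That divergence term is the main obstacle. To handle it I will expand
\[
\divg_{g_0}\Bigl(\frac{w}{W_\psi}\nabla\psi\Bigr)=w\,\divg_{g_0}\Bigl(\frac{\nabla\psi}{W_\psi}\Bigr)+\frac{\langle\nabla w,\nabla\psi\rangle}{W_\psi}.
\]
Using the equation $F_1(\psi,\Id)=0$, I then substitute
\[
\divg_{g_0}\Bigl(\frac{\nabla\psi}{W_\psi}\Bigr)=\frac{N-1}{W_\psi}-(N-1)He^\psi,
\]
whose right-hand side is $C^{1,\alpha}$. With this substitution every term in $\mathcal T_\psi[w]$ is a product of $C^{1,\alpha}$ functions and smooth tensors such as $g_0$. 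Because $C^{1,\alpha}$ is an algebra, this gives \eqref{eq:TwH1}.
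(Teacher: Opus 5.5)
Your proposal is correct and follows essentially the same route as the paper. You reduce via Lemma \ref{lem1eq:sect7}, rewrite each integrand as a $g_0$-pairing with the symmetric tensor $\delta_Yg_0$ (grouping the second and fourth terms exactly as the paper does), and obtain the $C^{1,\alpha}$ regularity of the divergence term by substituting the equation $F_1(\psi,\Id)=0$ after using Corollary \ref{cor:regwKernel} for $w\in C^{2,\alpha}(\overline{D_0})$.
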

\begin{proof}
Thanks to Lemma \ref{lem1eq:sect7} the relation \eqref{eq:statementteo1sect7} is equivalent to \eqref{eq:trasvcondition2}.  We now show that \eqref{eq:trasvcondition2} is equivalent to \eqref{eq:trasvcondition3}. 
To ease the notation we write $\nabla \cdot$ instead of $\nabla_{g_0} \cdot$.

We first observe that, since the Lie derivative of a Riemannian metric is again a symmetric $(0,2)$-tensor, then $\delta_Y g_0$ is symmetric. Hence, applying \eqref{eq:tensorpairing2} with
$$\omega=d\psi, \qquad \theta=dw, \qquad T=\delta_Y g_0,$$
and taking into account \eqref{eq1:appendix1}, we obtain
\begin{equation}\label{eq:tensorpairing3}
\bigl\langle \operatorname{Sym}(\dd\psi\otimes \dd w),\delta_Y g_0\bigr\rangle_{g_0} = \delta_Y g_0(\nabla\psi,\nabla w),
\end{equation}
where $\langle \cdot, \cdot\rangle_{g_0}$ denotes here the scalar product between tensor fields induced by $g_0$ (see \eqref{eq:defscalarproducttensors} for the definition).

Similarly, by \eqref{eq:tensorpairing1} we get that
\begin{equation}\label{eq:tensorpairing4}
\bigl\langle\dd\psi\otimes\dd\psi,\delta_Y g_0\bigr\rangle_{g_0}=\delta_Y g_0(\nabla\psi,\nabla\psi).
\end{equation}

Next, recalling that the trace of a $(0,2)$-tensor $T$ with respect to $g_0$ is defined by
$$\tr_{g_0}T=g_0^{ij}T_{ij},$$
then, we deduce that
$$
\langle g_0,T\rangle_{g_0}=\tr_{g_0}T.
$$
In particular, for any scalar function $\varphi$, it holds that
\begin{equation}\label{eq:tensorpairing6}
\langle \varphi g_0,\delta_Y g_0\rangle_{g_0} = \varphi\,\tr_{g_0}\delta_Y g_0.
\end{equation}

Now, thanks to \eqref{eq:tensorpairing3}--\eqref{eq:tensorpairing6} we can rewrite each integrand in \eqref{eq:trasvcondition2} as the scalar product of a tensor field against $\delta_Y g_0$. Indeed, by \eqref{eq:tensorpairing3}, we have
$$
-\frac1{W_\psi}\delta_Y g_0(\nabla\psi,\nabla w) = \left\langle -\frac1{W_\psi}\operatorname{Sym}(\dd\psi \otimes \dd w), \,\delta_Y g_0 \right\rangle_{g_0},
$$
and by \eqref{eq:tensorpairing4}, it holds
$$
\frac{\langle \nabla\psi,\nabla w\rangle_{g_0}+(N-1)w}{2W_\psi^3}\, \delta_Y g_0(\nabla\psi,\nabla\psi)
= \left\langle \frac{\langle \nabla\psi,\nabla w\rangle_{g_0}+(N-1)w}{2W_\psi^3}\, \dd\psi \otimes \dd\psi,\,\delta_Y g_0\right\rangle_{g_0}.
$$
Finally, by \eqref{eq:tensorpairing6}, we get that
$$
\frac12(\tr_{g_0}\delta_Y g_0)\, \divg_{g_0}\left(\frac{w}{W_\psi}\nabla\psi\right)=\left\langle \frac12\divg_{g_0}\left(\frac{w}{W_\psi}\nabla\psi\right)g_0, \,\delta_Y g_0\right\rangle_{g_0}.
$$
Therefore, summing up the three previous identities, the relation \eqref{eq:trasvcondition2} can be equivalently rewritten as
$$
\int_{D_0}\langle \mathcal T_\psi[w],\delta_Y g_0\rangle_{g_0}\,d\mu_{g_0}=0,
$$
where $\mathcal T_\psi[w]$ is the tensor field given by \eqref{eq:tensorfieldTpsi}.

To conclude it remains to prove \eqref{eq:TwH1}. The first two terms in
\eqref{eq:tensorfieldTpsi} belong to
$$
C^{1,\alpha}(\overline{D_0},T^*\SN\otimes T^*\SN),
$$
since $\psi\in C^{2,\alpha}(\overline{D_0})$ and
$w\in C^{2,\alpha}(\overline{D_0})$ (see Corollary \ref{cor:regwKernel}). Let us only discuss
the last term. Since $\psi$ solves $F(\psi,\Id)=(0,0)$, we have
$$
\divg_{g_0}\left(\frac{\nabla\psi}{W_\psi}\right) = \frac{N-1}{W_\psi}-(N-1)He^\psi.
$$
Therefore
$$
\divg_{g_0}\left(\frac{w}{W_\psi}\nabla\psi\right)=\frac{1}{W_\psi}\langle\nabla w,\nabla\psi\rangle_{g_0}+w\left(\frac{N-1}{W_\psi}-(N-1)He^\psi\right).
$$
The right-hand side belongs to $C^{1,\alpha}(\overline{D_0})$, by the regularity of $\psi$ and $w$. Hence also the last term in \eqref{eq:tensorfieldTpsi} belongs to the desired space, and \eqref{eq:TwH1} follows. The proof is complete.
\end{proof}

\begin{lemma}\label{lem:divT-zero}
Let $\psi\in C^{2,\alpha}(\overline{D_0})$ be a solution of $F(\psi,\Id)=(0,0)$, let $(w,\eta)\in \ker\bigl((D_\psi F(\psi,\Id))'\bigr)$, let $\mathcal{T}_\psi[w]$ be the tensor field defined in \eqref{eq:tensorfieldTpsi}, and suppose that \eqref{eq:statementteo1sect7} holds for all $Y\in \Y$.
Then
$$
\divg_{g_0} \mathcal{T}_\psi[w]=0 \qquad\text{in }D_0.
$$
\end{lemma}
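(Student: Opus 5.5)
By Lemma \ref{lem2eq:sect7}, the hypothesis that \eqref{eq:statementteo1sect7} holds for every $Y\in\Y$ is equivalent to
$$
\int_{D_0}\langle \mathcal T_\psi[w],\mathcal L_Yg_0\rangle_{g_0}\,d\mu_{g_0}=0\qquad\forall\, Y\in\Y .
$$
The plan is to integrate by parts, moving the derivative off $Y$ and onto $\mathcal T_\psi[w]$, and then to use compactly supported test fields.

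First, $\mathcal T:=\mathcal T_\psi[w]$ is symmetric. Each of its three terms is symmetric: $\operatorname{Sym}(d\psi\otimes dw)$, $d\psi\otimes d\psi$ and a multiple of $g_0$. Recall that $\mathcal L_Yg_0(X,Z)=g_0(\nabla_XY,Z)+g_0(X,\nabla_ZY)$, i.e. $(\mathcal L_Yg_0)_{ij}=\nabla_iY_j+\nabla_jY_i$. Symmetry of $\mathcal T$ then gives, pointwise,
$$
\langle \mathcal T,\mathcal L_Yg_0\rangle_{g_0}=2\,\mathcal T^{ij}\nabla_iY_j .
$$
Let $X$ be the vector field metrically dual to the one-form $\mathcal T(Y,\cdot)$, that is $X^i=\mathcal T^{ij}Y_j$. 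Then
$$
\divg_{g_0}X=(\divg_{g_0}\mathcal T)(Y)+\mathcal T^{ij}\nabla_iY_j,
$$
where $(\divg_{g_0}\mathcal T)_j=\nabla^i\mathcal T_{ij}$.

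By \eqref{eq:TwH1}, $\mathcal T\in C^{1,\alpha}(\overline{D_0})$, so $\divg_{g_0}\mathcal T$ is a continuous one-form on $\overline{D_0}$ and the divergence theorem applies. This yields, for every $Y\in\Y$,
$$
0=-2\int_{D_0}(\divg_{g_0}\mathcal T)(Y)\,d\mu_{g_0}+2\int_{\partial D_0}\mathcal T(Y,\nu_{g_0})\,d\sigma_{g_0}.
$$

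Next, restrict to $Y$ smooth with compact support in $D_0$, extended by zero to all of $\SN$. Such $Y$ lies in $\Y$, because $D_0\Subset Q$ forces $Y=0$ on $\SN\setminus Q$. For these $Y$ the boundary term vanishes, and we get
$$
\int_{D_0}\langle(\divg_{g_0}\mathcal T)^\sharp,Y\rangle_{g_0}\,d\mu_{g_0}=0\qquad\text{for all such }Y .
$$
Since $(\divg_{g_0}\mathcal T)^\sharp$ is continuous and the test fields are arbitrary, the fundamental lemma of the calculus of variations (applied in the chart $\pi$, componentwise) gives $\divg_{g_0}\mathcal T=0$ in $D_0$.

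The only delicate points are confirming that $\mathcal T$ is symmetric, so that the Lie derivative pairing collapses to $2\mathcal T^{ij}\nabla_iY_j$, and having enough regularity to integrate by parts. The regularity is already supplied by \eqref{eq:TwH1}, which in turn relies on the equation for $\psi$ to control the divergence term. No information about the boundary is needed for this lemma: the boundary identity $\mathcal T(\cdot,\nu_{g_0})=0$ would presumably be obtained later by allowing general $Y\in\Y$.
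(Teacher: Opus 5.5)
Your proposal is correct and follows essentially the paper's argument. Symmetry of $\mathcal T_\psi[w]$ reduces the pairing with $\mathcal L_Yg_0$ to $2\mathcal T^{ij}\nabla_iY_j$, and testing with compactly supported $Y$ plus an integration by parts (justified by \eqref{eq:TwH1}) gives $\divg_{g_0}\mathcal T_\psi[w]=0$. The only cosmetic difference is that you integrate by parts for general $Y\in\Y$ before specializing to compactly supported fields. This produces along the way the boundary identity the paper obtains in the subsequent lemma.
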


\begin{proof}
Let us first observe that by definition $\mathcal{T}_\psi[w]$ is a symmetric $(0,2)$-tensor field and let us denote by $\mathcal{T}_{ij}$ the components of $\mathcal{T}_\psi[w]$ in local coordinates, i.e.
$$\mathcal{T}_\psi[w] =\mathcal{T}_{ij}\, dx^i\otimes dx^j.$$
Moreover, recalling that $\delta_Y g_0=\mathcal{L}_Yg_0$ and \eqref{eq:liederiv}, then, in local coordinates, we have
\beq\label{eq:complocalhY}
(\delta_Y g_0)_{ij}=\nabla_iY_j+\nabla_jY_i.
\eeq
Now, denoting by $\mathcal{T}^{ij}$ the components of $\mathcal{T}_\psi[w]$ with both indices raised by the metric $g_0$, namely $$\mathcal{T}^{ij}=g_0^{ai}g_0^{bj}\mathcal{T}_{ab},$$ 
then, by definition of scalar product between tensors (see \eqref{eq:defscalarproducttensors}), and \eqref{eq:complocalhY}, we get that
$$\langle \mathcal{T}_\psi[w],\delta_Y g_0\rangle_{g_0}=\mathcal{T}^{ij}(\nabla_iY_j+\nabla_jY_i).$$
In addition, since $\mathcal{T}_\psi[w]$ is symmetric then $\mathcal{T}^{ij}=\mathcal{T}^{ji}$, and we conclude that
\beq\label{eq:prodscalTsymmYsmooth}
\langle \mathcal{T}_\psi[w], \delta_Y g_0\rangle_{g_0}=2\mathcal{T}^{ij}\nabla_iY_j.
\eeq
By assumption, the relation \eqref{eq:statementteo1sect7} holds for every $Y\in \Y$. In particular, it holds for every $Y\in C_c^\infty(D_0,TD_0)\subset \Y$. Hence, by Lemma \ref{lem2eq:sect7} for every such $Y$,
\begin{equation}\label{eq1:lem1sect7}
0 = \int_{D_0}\langle \mathcal{T}_\psi[w],\delta_Y g_0\rangle_{g_0}\,d\mu_{g_0} = 2\int_{D_0}\mathcal{T}^{ij}\nabla_iY_j\,d\mu_{g_0}.
\end{equation}
On the other hand, as $\mathcal{T}_\psi[w]\in C^{1,\alpha}(\overline{D_0}, T^*\SN\otimes T^*\SN)$ (see \eqref{eq:TwH1}), then integrating by parts, using that $Y$ is compactly supported in $D_0$, and recalling the definition of divergence of a tensor field (see \eqref{eq:defDivTensor}), we infer that
\begin{equation}\label{eq2:lem1sect7}
\int_{D_0}\mathcal{T}^{ij}\nabla_iY_j\,d\mu_{g_0}
= -\int_{D_0}(\nabla_i\mathcal{T}^{ij})Y_j\,d\mu_{g_0} =-\int_{D_0}\langle \divg_{g_0}\mathcal{T}_\psi[w],Y\rangle_{g_0}\,d\mu_{g_0}.
\end{equation}
Then, combining \eqref{eq1:lem1sect7} and \eqref{eq2:lem1sect7} we conclude that
$$
\int_{D_0}\langle \divg_{g_0}\mathcal{T}_\psi[w],Y\rangle_{g_0}\,d\mu_{g_0}=0 \qquad \forall\,Y\in C_c^\infty(D_0,TD_0).$$
Finally, as the test field $Y$ is arbitrary, we obtain
$$\divg_{g_0}\mathcal{T}_\psi[w]=0 \qquad\text{in }D_0,$$
and the proof is complete.
\end{proof}

\begin{lemma}\label{lem:Tnunuzero}
Under the same assumptions as Lemma \ref{lem:divT-zero}, it holds that
\begin{equation}\label{eq:Tnunu-zero}
\mathcal T_\psi[w](\nu_{g_0},\nu_{g_0})=0 \qquad\text{on }\partial D_0.
\end{equation}
\end{lemma}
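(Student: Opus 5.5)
The plan is to exploit the transversality identity \eqref{eq:statementteo1sect7} for vector fields $Y\in\Y$ that are \emph{not} compactly supported in $D_0$, i.e.\ fields that do not vanish on $\partial D_0$. Lemma \ref{lem:divT-zero} has already removed the interior contribution, so only a boundary term will remain, and its vanishing for all admissible $Y$ should force \eqref{eq:Tnunu-zero}. In fact I expect the argument to give the stronger statement $\mathcal T_\psi[w](\nu_{g_0},\cdot)=0$ on $\partial D_0$.

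First, fix an arbitrary $Y\in\Y$. By Lemma \ref{lem2eq:sect7}, \eqref{eq:statementteo1sect7} is equivalent to \eqref{eq:trasvcondition3}. By the symmetry of $\mathcal T_\psi[w]$, as in \eqref{eq:prodscalTsymmYsmooth}, this reads
\[
\int_{D_0}\mathcal T^{ij}\nabla_iY_j\,d\mu_{g_0}=0 .
\]
Since $\mathcal T_\psi[w]\in C^{1,\alpha}(\overline{D_0})$ by \eqref{eq:TwH1}, and $Y$ is $C^{3,\alpha}$ up to $\partial D_0$, the divergence theorem applied to the vector field $\mathcal T^{ij}Y_j\partial_i$ on the $C^{3,\alpha}$ domain $D_0$ is legitimate. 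It gives
\[
\int_{D_0}\mathcal T^{ij}\nabla_iY_j\,d\mu_{g_0}=-\int_{D_0}\langle\divg_{g_0}\mathcal T_\psi[w],Y\rangle_{g_0}\,d\mu_{g_0}+\int_{\partial D_0}\mathcal T_\psi[w](\nu_{g_0},Y)\,d\sigma_{g_0}.
\]
Lemma \ref{lem:divT-zero} kills the interior integral. Hence
\[
\int_{\partial D_0}\mathcal T_\psi[w](\nu_{g_0},Y)\,d\sigma_{g_0}=0\qquad\text{for every }Y\in\Y .
\]

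Second, I will use the richness of $\Y$ along $\partial D_0$, which comes from the standing assumption $D_0\Subset Q$. Take any $\zeta\in C^\infty(\SN)$. Extend $\nu_{g_0}$ to a $C^{2,\alpha}$ vector field $\widetilde\nu$ on a neighbourhood of $\partial D_0$, for instance via $-\nabla_{g_0}\rho/|\nabla_{g_0}\rho|_{g_0}$ with the defining function $\rho$ of \eqref{eq:deffunctrho}. Then multiply by a smooth cutoff $\chi$ supported in $Q$ with $\chi\equiv1$ near $\partial D_0$. This regularity is only $C^{2,\alpha}$, not the $C^{3,\alpha}$ required in $\Y$, so I would rather approximate: choose smooth fields $Y_k=\chi\zeta\nu_k$ with $\nu_k\to\widetilde\nu$ uniformly near $\partial D_0$. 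Each $Y_k$ lies in $\Y$, since it vanishes outside $Q$. Passing to the limit in the boundary identity, which only requires uniform convergence on $\partial D_0$, yields
\[
\int_{\partial D_0}\zeta\,\mathcal T_\psi[w](\nu_{g_0},\nu_{g_0})\,d\sigma_{g_0}=0\qquad\text{for all }\zeta\in C^\infty(\SN).
\]
Since $\mathcal T_\psi[w](\nu_{g_0},\nu_{g_0})$ is continuous on $\partial D_0$, and restrictions of smooth $\zeta$ are dense in $C(\partial D_0)$, it must vanish identically. This is \eqref{eq:Tnunu-zero}. The same argument with tangential fields in place of $\nu$ gives the tangential components as well.

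The main obstacle is only technical: justifying the integration by parts up to the boundary, and producing admissible test fields in $\Y$ with prescribed boundary values. Both are handled by the $C^{1,\alpha}$ regularity in \eqref{eq:TwH1} and by the approximation and cutoff argument above. The geometric input is simply that $\overline{D_0}\subset Q$, so that fields supported in $Q$ can take arbitrary values on $\partial D_0$.
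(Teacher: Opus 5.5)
Your proof is correct and follows the paper's argument. You integrate by parts against an arbitrary $Y\in\Y$, use Lemma \ref{lem:divT-zero} to reduce \eqref{eq:trasvcondition3} to $\int_{\partial D_0}\mathcal T_\psi[w](\nu_{g_0},Y)\,d\sigma_{g_0}=0$, and then localize with test fields supported in $Q$. The only difference is technical: instead of approximating a $C^{2,\alpha}$ extension of $\nu_{g_0}$, the paper takes a smooth $Z\in C_c^\infty(U,T\SN)$ with $Z(p)=\nu_{g_0}(p)$ at a single point, tests with $Y=\varphi Z$ to get $\mathcal T_\psi[w](\nu_{g_0},Z)=0$ on $\partial D_0\cap U$, and evaluates at $p$, so no approximation step is needed.
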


\begin{proof}
For every $Y\in\Y$, restricting $Y$ to $\overline{D_0}$, using \eqref{eq:prodscalTsymmYsmooth} and integrating by parts we get
$$
\begin{aligned}
\int_{D_0} \langle\mathcal T_\psi[w],\delta_Yg_0\rangle_{g_0}\,d\mu_{g_0}=
&-2\int_{D_0}\langle\divg_{g_0}\mathcal T_\psi[w],Y\rangle_{g_0}\,d\mu_{g_0}\\
&+2\int_{\partial D_0}\mathcal T_\psi[w](\nu_{g_0},Y)\,d\sigma_{g_0}.
\end{aligned}
$$
By Lemma \ref{lem:divT-zero}, this reduces to
\begin{equation}\label{eq:boundaryidT}
\int_{D_0}
\langle\mathcal T_\psi[w],\delta_Yg_0\rangle_{g_0}
\,d\mu_{g_0}=2\int_{\partial D_0} \mathcal T_\psi[w](\nu_{g_0},Y) \,d\sigma_{g_0}.
\end{equation}
On the other hand, since \eqref{eq:statementteo1sect7} holds for every
$Y\in\Y$, then, by Lemma \ref{lem2eq:sect7}, we have
$$
\int_{D_0} \langle\mathcal T_\psi[w],\delta_Yg_0\rangle_{g_0} \,d\mu_{g_0} =0\qquad \forall\, Y\in\Y.
$$
Hence, by \eqref{eq:boundaryidT}, we get
\begin{equation}\label{eq:boundarytraction0}
\int_{\partial D_0}\mathcal T_\psi[w](\nu_{g_0},Y) \,d\sigma_{g_0}=0\qquad \forall\, Y\in\Y.
\end{equation}

Fix $p\in\partial D_0$. Since $\partial D_0\Subset Q$, we may choose an
open neighborhood $U$ of $p$ such that
$$
\overline U\subset Q.
$$
We may also choose a smooth vector field
$$
Z\in C_c^\infty(U,T\SN)
$$
such that
$$
Z(p)=\nu_{g_0}(p).
$$
For every $\varphi\in C_c^\infty(U)$, the vector field
$$
Y:=\varphi Z
$$
belongs to $\Y$. Hence \eqref{eq:boundarytraction0} gives
$$
\int_{\partial D_0} \varphi\, \mathcal T_\psi[w](\nu_{g_0},Z)\,d\sigma_{g_0}=0\qquad \forall \varphi\in C_c^\infty(U).
$$
Then, since the map
$$
q\longmapsto (\mathcal{T}_\psi[w])_q(\nu_{g_0}(q),Z(q))
$$
is continuous on $\partial D_0\cap \overline{U}$, by the arbitrariness of
$\varphi$, we infer that
$$
\mathcal T_\psi[w](\nu_{g_0},Z)=0
\qquad\text{on }\partial D_0\cap U.
$$
In particular, evaluating at $p$ and recalling that
$Z(p)=\nu_{g_0}(p)$, we obtain
$$
(\mathcal T_\psi[w])_p(\nu_{g_0}(p),\nu_{g_0}(p))=(\mathcal T_\psi[w])_p(\nu_{g_0}(p),Z(p))=0.
$$
Finally, since $p\in\partial D_0$ is arbitrary, \eqref{eq:Tnunu-zero} follows and the proof is complete.
\end{proof}

\begin{lemma}\label{lem:Tnunu-boundary-formula}
Under the assumptions of Lemma \ref{lem:Tnunuzero}, it holds
\begin{equation}\label{eq:TnunudivV}
\mathcal T_\psi[w](\nu_{g_0},\nu_{g_0})=\frac12\divg_{g_0}\left(\frac{w}{W_\psi}\nabla_{g_0}\psi\right)
\qquad\text{on }\partial D_0.
\end{equation}
In particular, by Lemma \ref{lem:Tnunuzero}, we have 
\begin{equation}\label{eq:divV0boundary}
\divg_{g_0}\left(\frac{w}{W_\psi}\nabla_{g_0}\psi\right)=0 \qquad\text{on }\partial D_0.
\end{equation}
\end{lemma}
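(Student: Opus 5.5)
This is a direct pointwise evaluation of the tensor $\mathcal T_\psi[w]$ defined in \eqref{eq:tensorfieldTpsi} at $(\nu_{g_0},\nu_{g_0})$ on $\partial D_0$. Throughout, I use the two boundary conditions that hold on $\partial D_0$. The first is $\partial_{\nu_{g_0}}\psi=0$, which holds because $F_2(\psi,\Id)=0$. The second is $\partial_{\nu_{g_0}}w=0$, which comes from Theorem \ref{theo1sect5}. By Corollary \ref{cor:regwKernel}, $w\in C^{2,\alpha}(\overline{D_0})$, and by \eqref{eq:TwH1}, $\mathcal T_\psi[w]$ is $C^{1,\alpha}$ up to the boundary. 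So every term below is continuous on $\overline{D_0}$, and the evaluation is pointwise and classical.

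I treat the three terms of \eqref{eq:tensorfieldTpsi} separately.

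For the symmetric part, with the convention $\operatorname{Sym}(\omega\otimes\theta)=\frac12(\omega\otimes\theta+\theta\otimes\omega)$ used in \eqref{eq:tensorpairing2}, we get
\[
\operatorname{Sym}(d\psi\otimes dw)(\nu_{g_0},\nu_{g_0})=\partial_{\nu_{g_0}}\psi\,\partial_{\nu_{g_0}}w=0 .
\]

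For the second term, $(d\psi\otimes d\psi)(\nu_{g_0},\nu_{g_0})=(\partial_{\nu_{g_0}}\psi)^2=0$. Hence this term vanishes whatever its scalar coefficient $\bigl(\langle\nabla_{g_0}\psi,\nabla_{g_0}w\rangle_{g_0}+(N-1)w\bigr)/(2W_\psi^3)$ is; that coefficient is bounded, so nothing further is needed.

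For the last term, $g_0(\nu_{g_0},\nu_{g_0})=1$, so it contributes exactly $\frac12\divg_{g_0}\bigl(\frac{w}{W_\psi}\nabla_{g_0}\psi\bigr)$.

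Adding the three contributions gives \eqref{eq:TnunudivV}. Combining it with Lemma \ref{lem:Tnunuzero} yields \eqref{eq:divV0boundary} immediately.

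No step here is a real obstacle. The only point to watch is that the divergence term is evaluated on the boundary. This is legitimate because, by the computation in the proof of Lemma \ref{lem2eq:sect7}, that divergence equals $\frac{1}{W_\psi}\langle\nabla w,\nabla\psi\rangle+w\bigl(\frac{N-1}{W_\psi}-(N-1)He^\psi\bigr)$. This expression is $C^{1,\alpha}$ on $\overline{D_0}$, so its boundary values are well defined.
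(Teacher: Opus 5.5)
Your proof is correct and follows the paper's argument exactly: you evaluate each of the three terms of $\mathcal T_\psi[w]$ at $(\nu_{g_0},\nu_{g_0})$, the first two vanish because $\partial_{\nu_{g_0}}\psi=0$, and the last one reduces to the stated divergence because $g_0(\nu_{g_0},\nu_{g_0})=1$. One minor remark: $\partial_{\nu_{g_0}}\psi=0$ by itself already kills the first term, so you do not actually need the Neumann condition on $w$ there.
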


\begin{proof}
We evaluate each term of the tensor field $\mathcal T_\psi[w]$ on $(\nu_{g_0},\nu_{g_0})$. For the first term, by definition (see \eqref{eq:tensorfieldTpsi}, \eqref{eq:defalphatensorbeta} and \eqref{eq:defsymalphatensorbeta}), we get
$$
\operatorname{Sym}(\dd\psi\otimes\dd w)(\nu_{g_0},\nu_{g_0})=\partial_{\nu_{g_0}}\psi\,\partial_{\nu_{g_0}}w.
$$
Since $\psi$ satisfies $F(\psi,\Id)=(0,0)$, in particular, we have $\partial_{\nu_{g_0}}\psi=0$ on $\partial D_0$. Hence
$$
\operatorname{Sym}(\dd\psi\otimes\dd w)(\nu_{g_0},\nu_{g_0})=0
\qquad\text{on }\partial D_0.
$$

For the second term we have
$$
(\dd\psi\otimes\dd \psi)(\nu_{g_0},\nu_{g_0})=(\partial_{\nu_{g_0}}\psi)^2=0 \qquad\text{on }\partial D_0.
$$
Therefore, on $\partial D_0$, only the last term survives, and we get
$$
\mathcal T_\psi[w](\nu_{g_0},\nu_{g_0})=\frac12\divg_{g_0}\left(\frac{w}{W_\psi}\nabla_{g_0}\psi\right)g_0(\nu_{g_0},\nu_{g_0}).
$$
Since $g_0(\nu_{g_0},\nu_{g_0})=1$, this proves \eqref{eq:TnunudivV}.

Finally, combining \eqref{eq:TnunudivV} with \eqref{eq:Tnunu-zero}, we obtain
\eqref{eq:divV0boundary}, and the proof is complete.
\end{proof}

From the previous results, we deduce the following crucial identity between $w$ and $\psi$ on $\partial D_0$.

\begin{proposition}\label{prop1:sect7}
Let $\psi\in C^{2,\alpha}(\overline{D_0})$ be a solution of $F(\psi,\Id)=(0,0)$, let $(w,\eta)\in \ker\bigl((D_\psi F(\psi,\Id))'\bigr)$, and suppose that \eqref{eq:statementteo1sect7} holds for all $Y\in \Y$.
Then 
\begin{equation}\label{eq1:statprop1sect7}
\langle \nabla_{g_0}^\top w,\nabla_{g_0}^\top\psi\rangle_{g_0}=(N-1)\bigl(He^\psi W_\psi-1\bigr)w \qquad\text{on }\partial D_0.
\end{equation}
\end{proposition}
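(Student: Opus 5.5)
The identity should come from combining \eqref{eq:divV0boundary} of Lemma \ref{lem:Tnunu-boundary-formula} with the equation satisfied by $\psi$ and the boundary conditions $\partial_{\nu_{g_0}}\psi=\partial_{\nu_{g_0}}w=0$. The plan is to expand $\divg_{g_0}\bigl(\frac{w}{W_\psi}\nabla_{g_0}\psi\bigr)$ on $\partial D_0$ in two ways and compare the results.

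First, I would use the computation already carried out at the end of the proof of Lemma \ref{lem2eq:sect7}. Since $F_1(\psi,\Id)=0$, we have
$$
\divg_{g_0}\left(\frac{w}{W_\psi}\nabla_{g_0}\psi\right)=\frac{1}{W_\psi}\langle\nabla_{g_0} w,\nabla_{g_0}\psi\rangle_{g_0}+(N-1)w\left(\frac{1}{W_\psi}-He^\psi\right)
$$
in $\overline{D_0}$. Both sides are continuous up to the boundary because $w\in C^{2,\alpha}(\overline{D_0})$ by Corollary \ref{cor:regwKernel}. We then restrict to $\partial D_0$. There $\partial_{\nu_{g_0}}\psi=0$ and, by Theorem \ref{theo1sect5}, $\partial_{\nu_{g_0}}w=0$. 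Hence
$$
\langle\nabla_{g_0} w,\nabla_{g_0}\psi\rangle_{g_0}=\langle\nabla^\top_{g_0} w,\nabla^\top_{g_0}\psi\rangle_{g_0}+\partial_{\nu_{g_0}}w\,\partial_{\nu_{g_0}}\psi=\langle\nabla^\top_{g_0} w,\nabla^\top_{g_0}\psi\rangle_{g_0}.
$$

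Next, I would insert \eqref{eq:divV0boundary}, which says the left-hand side vanishes on $\partial D_0$, and multiply by $W_\psi>0$. This gives
$$
\langle\nabla^\top_{g_0} w,\nabla^\top_{g_0}\psi\rangle_{g_0}+(N-1)w\bigl(1-He^\psi W_\psi\bigr)=0 \qquad\text{on }\partial D_0,
$$
which is exactly \eqref{eq1:statprop1sect7}.

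There is no real obstacle here. The only point that needs care is that the pointwise identity holds up to the boundary, and this follows from the $C^{2,\alpha}$ regularity of $\psi$ and $w$ together with the fact that the equation $F_1(\psi,\Id)=0$ holds classically on $\overline{D_0}$ by continuity.
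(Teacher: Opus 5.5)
Your proposal is correct and follows essentially the same argument as the paper. Both combine \eqref{eq:divV0boundary} with the product-rule expansion of $\divg_{g_0}\bigl(\frac{w}{W_\psi}\nabla_{g_0}\psi\bigr)$, extend the equation for $\psi$ to $\partial D_0$ by continuity, use $\partial_{\nu_{g_0}}\psi=\partial_{\nu_{g_0}}w=0$ to pass to tangential gradients, and multiply by $W_\psi$.
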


\begin{proof}
We recall that by Theorem \ref{theo1sect5} and Corollary \ref{cor:regwKernel}, we
have $w\in C^{2,\alpha}(\overline{D_0})$ and $\partial_{\nu_{g_0}} w=0$ on $\partial D_0$.
By Lemma \ref{lem:Tnunu-boundary-formula}, we have
$$
\divg_{g_0}\left(\frac{w}{W_\psi}\nabla_{g_0}\psi\right)=0 \qquad\text{on }\partial D_0.
$$
On the other hand, by the product rule, we get
$$
\divg_{g_0}\left(\frac{w}{W_\psi}\nabla_{g_0}\psi\right)=\frac{1}{W_\psi} \langle\nabla_{g_0}w,\nabla_{g_0}\psi\rangle_{g_0}+w\,\divg_{g_0}\left(\frac{\nabla_{g_0}\psi}{W_\psi}\right).
$$
Now, since the first component of $F(\psi,\Id)=(0,0)$ gives
$$
-\divg_{g_0}\left(\frac{\nabla_{g_0}\psi}{W_\psi}\right)+\frac{N-1}{W_\psi}-(N-1)He^\psi=0\qquad\text{in }D_0,
$$
and all the terms in this identity are continuous on $\overline{D_0}$, the same identity holds on $\partial D_0$. 
Hence
\beq\label{eq1:transpeqlemma}
0=\frac{1}{W_\psi}\langle\nabla_{g_0}w,\nabla_{g_0}\psi\rangle_{g_0}+(N-1)\left(\frac{1}{W_\psi}-He^\psi\right)w\qquad\text{on }\partial D_0.
\eeq
Moreover, since
$$
\partial_{\nu_{g_0}}\psi=\partial_{\nu_{g_0}}w=0\qquad\text{on }\partial D_0,
$$
we have
\beq\label{eq2:transpeqlemma}
\langle\nabla_{g_0}w,\nabla_{g_0}\psi\rangle_{g_0}=\langle\nabla_{g_0}^\top w,\nabla_{g_0}^\top\psi\rangle_{g_0}
\qquad\text{on }\partial D_0.
\eeq
Multiplying \eqref{eq1:transpeqlemma} by $W_\psi$ and using \eqref{eq2:transpeqlemma}, we obtain
$$
\langle\nabla_{g_0}^\top w,\nabla_{g_0}^\top\psi\rangle_{g_0}=(N-1)\bigl(He^\psi W_\psi-1\bigr)w
\qquad\text{on }\partial D_0,
$$
as claimed. The proof is complete.
\end{proof}

We conclude this section by proving Theorem \ref{thm1sect7}.

\begin{proof}[Proof of Theorem \ref{thm1sect7}]
Let $\psi\in C^{2,\alpha}(\overline{D_0})$ be a solution of $F(\psi,\Id)=(0,0)$, and let $(w,\eta)$ be an element of $\ker\bigl((D_\psi F(\psi,\Id))'\bigr)$. 
By Theorem \ref{theo1sect5} and Corollary \ref{cor:regwKernel}, the pair $(w,\eta)$ satisfies \eqref{eq1:statteo1sect5}, with
$$
w\in C^{2,\alpha}(\overline{D_0}), \qquad \eta\in C^{1,\alpha}(\partial D_0).
$$
Let us set
$$
\psi_H:=\log\frac1H
$$
and observe that at least one of the following three alternatives occurs:
\begin{enumerate}
\item[(i)]
$$
\psi\equiv \psi_H
\qquad \text{on }\partial D_0,
$$
\item[(ii)]
$$
\max_{\partial D_0}\psi>\psi_H,
$$
\item[(iii)]
$$
\min_{\partial D_0}\psi<\psi_H.
$$
\end{enumerate}
To conclude, we show that in each of these three cases the only possibility is $(w,\eta)=(0,0)$.
\medskip

\noindent
\textbf{Case (i): $\psi\equiv \psi_H$ on $\partial D_0$.}
\medskip

Set
$$
u:=\psi-\psi_H.
$$
By construction,
$$
u=0
\qquad \text{on }\partial D_0.
$$
Moreover, since $\partial_{\nu_{g_0}}\psi=0$ on $\partial D_0$, we also have
$$
\partial_{\nu_{g_0}}u=0
\qquad \text{on }\partial D_0.
$$
Since $\psi$ solves
\begin{equation}\label{eq:CMCradialgraphs}
-\divg_{g_0}\!\left(\frac{ \nabla_{g_0} \psi}{W_\psi}\right)+\frac{N-1}{W_\psi}-(N-1)He^\psi=0 \qquad \text{in }D_0,
\end{equation}
and the constant function $\psi_H$ is also a solution of \eqref{eq:CMCradialgraphs}, we readily check that $u$ satisfies
\begin{equation}\label{eq:u-difference-self}
-\divg_{g_0}\!\left(\frac{ \nabla_{g_0} u}{W_\psi}\right)+(N-1)\left(\frac1{W_\psi}-1\right)-(N-1)(e^u-1)=0.
\end{equation}
By a straightforward computation, rewriting the last two terms in \eqref{eq:u-difference-self} linearly in $\nabla_{g_0} u$ and $u$, we check that $u$ satisfies a linear uniformly elliptic equation of the form
$$
-\divg_{g_0}(a\, \nabla_{g_0} u)+\langle b, \nabla_{g_0} u\rangle_{g_0}-c\,u=0 \qquad \text{in }D_0,
$$
where
$$
a=\frac{1}{W_\psi},
\qquad
b=-(N-1)\int_0^1 \frac{t\, \nabla_{g_0} u}{(1+t^2| \nabla_{g_0} u|_{g_0}^2)^{3/2}}\,dt, \qquad c=(N-1)\int_0^1e^{tu}\,dt.
$$
Since
$$u=0, \qquad \partial_{\nu_{g_0}}u=0 \qquad \text{on }\partial D_0,$$
by a classical result of Calder\'on, also known as Calder\'on's uniqueness theorem (see \cite{Calderon}), it follows that $u$ vanishes in an interior neighborhood of $\partial D_0$. More precisely, in our framework, up to passing to local coordinates, we apply the version stated in \cite[Theorem 3.3.1]{Isakov}. Hence, by interior unique continuation (see \cite[Section 3.3]{Isakov} or \cite{GarofaloLin}) we get that $u\equiv 0$ in $D_0$. Therefore
$$
\psi\equiv \psi_H \qquad\text{in }D_0.
$$

Thus $ \nabla_{g_0}\psi\equiv 0$, and referring to the notation of \eqref{eq:simplnotation}, we have
$$
W_\psi\equiv 1, \qquad b_\psi=0, \qquad c_\psi=(N-1)He^\psi=N-1,
$$
in particular $A_\psi$ is the identity tensor field.
Therefore the adjoint problem \eqref{eq1:statteo1sect5} reduces to
\begin{equation}\label{eq:reducedadjoint}
\begin{cases}
-\Delta_{g_0} w =(N-1)\,w & \text{in }D_0,\\[0.4em]
\partial_{\nu_{g_0}}w=0 & \text{on }\partial D_0,\\[0.4em]
\eta=w & \text{on }\partial D_0.
\end{cases}
\end{equation}
Thus $w$ is a Neumann eigenfunction of $-\Delta_{g_0}$ with eigenvalue $N-1$, and by hypothesis the only possibility is
$$
w\equiv 0 \qquad \text{in }D_0.
$$
Then, by the last relation in \eqref{eq:reducedadjoint}, we also obtain
$$
\eta=0 \qquad \text{on }\partial D_0.
$$
Hence $(w,\eta)=(0,0)$.

\medskip

\noindent
\textbf{Case (ii): $\max_{\partial D_0}\psi>\psi_H$.}

\medskip

Set
$$
E_+:=\{p\in\partial D_0;\,\psi(p)>\psi_H\}.
$$
By assumption, $E_+$ is a nonempty open subset of $\partial D_0$. Let $p_0\in E_+$, and let $\sigma_{p_0}$ be the integral curve of the vector field $\nabla_{g_0}^\top\psi$ on $\partial D_0$ such that $$\sigma_{p_0}(0)=p_0.$$ 
Since $\partial D_0$ is compact, $\sigma_{p_0}$ is defined for every $t\in\mathbb R$. Then, for all $t\ge 0$, it holds that
$$
\frac{d}{dt}\psi(\sigma_{p_0}(t)) = |\nabla_{g_0}^\top\psi|_{g_0}^2(\sigma_{p_0}(t)) \geq 0,
$$
and, since $\psi(p_0)>\psi_H$, it follows that
$$
He^{\psi(\sigma_{p_0}(t))}\ge He^{\psi(p_0)}>He^{\psi_H}=1 \qquad \forall t\ge 0.
$$
In particular, as $W_\psi\geq 1$, we have
$$
He^{\psi(\sigma_{p_0}(t))}W_\psi(\sigma_{p_0}(t))\ge He^{\psi(p_0)}>1 \qquad \forall t\ge 0.
$$
Setting
$$
q_\psi(p):=(N-1)(He^{\psi(p)} W_\psi(p)-1),
$$
we infer that for all $t\ge 0$
$$
q_\psi(\sigma_{p_0}(t)) \geq (N-1)\bigl(He^{\psi(p_0)}-1\bigr)=:c_*>0.
$$
On the other hand, since \eqref{eq:statementteo1sect7} holds by assumption, then, by Proposition \ref{prop1:sect7} and by definition of $\sigma_{p_0}$, we deduce that
$$
\frac{d}{dt}w(\sigma_{p_0}(t))=\langle \nabla_{g_0}^\top w(\sigma_{p_0}(t)),\sigma_{p_0}^\prime(t)\rangle_{g_0} =q_\psi(\sigma_{p_0}(t))\,w(\sigma_{p_0}(t)).
$$
Therefore, for all $t\ge 0$,
$$
w(\sigma_{p_0}(t)) = w(p_0)e^{\int_0^t q_\psi(\sigma_{p_0}(s))\,ds},
$$
and thus
$$
|w(\sigma_{p_0}(t))| \geq |w(p_0)|e^{c_* t}.
$$
Since $w$ is continuous on the compact set $\partial D_0$, it is bounded there. Hence the previous estimate is possible only if
$$
w(p_0)=0.
$$
Since $p_0\in E_+$ was arbitrary, we conclude that
$$
w\equiv 0 \qquad \text{on } E_+.
$$
Moreover, since $\partial_{\nu_{g_0}}w=0$ on $\partial D_0$, we have
\beq\label{sect7:eqwwnu0}
w=0, \qquad \partial_{\nu_{g_0}}w=0 \qquad \text{on }E_+.
\eeq
Since $w$ solves the linear elliptic equation 
$$
L_\psi^*w=0 \qquad\text{in }D_0,
$$
(see Theorem \ref{theo1sect5}) and \eqref{sect7:eqwwnu0} holds, then, by the same argument of \textit{Case (i)} based on \cite[Theorem 3.3.1]{Isakov}, $w$ vanishes in an interior neighborhood of $E_+$. Hence, by unique continuation, it follows that
$$
w\equiv 0 \qquad \text{in }D_0.
$$
Finally, by \eqref{eq1:statteo1sect5}, we conclude that
$$
\eta=\frac{w}{W_\psi}=0 \qquad \text{on }\partial D_0.
$$
Hence also in this case we obtain $(w,\eta)=(0,0)$.

\medskip

\noindent
\textbf{Case (iii): $\min_{\partial D_0}\psi<\psi_H$.}

\medskip

Set
$$
m:=\min_{\partial D_0}\psi
$$
and let
$$
K_-:=\{p\in\partial D_0;\,\psi(p)=m\}.
$$
By assumption,
$$
m<\psi_H.
$$
Since $\partial D_0$ is compact and $\psi$ is continuous, $K_-$ is a nonempty compact subset of $\partial D_0$.

Let $p\in K_-$. Since $p$ is a minimum point of $\psi$ on $\partial D_0$, we have
$$
 \nabla_{g_0}^\top\psi(p)=0.
$$
Moreover, as $\partial_{\nu_{g_0}}\psi=0$ on $\partial D_0$, we infer that
$$
\nabla_{g_0}\psi(p)=0, \qquad W_\psi(p)=1.
$$
Hence, for every $p\in K_-$,
$$
q_\psi(p) = (N-1)\bigl(He^{\psi(p)}W_\psi(p)-1\bigr) = (N-1)(He^m-1)<0.
$$

Since $q_\psi$ is continuous and $K_-$ is compact, there exist an open neighborhood $U$ of $K_-$ in $\partial D_0$ and a constant $\delta>0$ such that
$$
q_\psi\le -\delta<0 \qquad \text{on }U.
$$
If $U\neq \partial D_0$, then $\partial D_0\setminus U$ is compact and disjoint from $K_-$, hence the number
$$
m_-:=\min_{\partial D_0\setminus U}\psi
$$
is well defined and satisfies
$$
m_->m.
$$
In this case let us fix $\overline{m} \in \mathbb R$ such that
$$
m<\overline{m}<\min\{m_-,\psi_H\}.
$$
If instead $U=\partial D_0$, we choose $\overline{m}\in \mathbb R$ such that
$$
m<\overline{m}<\psi_H.
$$
Setting
$$
E_-:=\{p\in\partial D_0;\,\psi(p)<\overline{m}\},
$$
we obtain a nonempty open subset of $\partial D_0$ such that
$$
E_-\subset U.
$$

Now let $p_0\in E_-$, and let $\sigma_{p_0}$ be the integral curve of the vector field
$- \nabla_{g_0}^\top\psi$
on $\partial D_0$ such that
$$
\sigma_{p_0}(0)=p_0.
$$
Since $\partial D_0$ is compact, $\sigma_{p_0}$ is defined for every $t\in\mathbb R$.
Then, for all $t\ge 0$, it holds
$$
\frac{d}{dt}\psi(\sigma_{p_0}(t))=-|\nabla_{g_0}^\top\psi|_{g_0}^2(\sigma_{p_0}(t))\leq 0.
$$
Since $p_0\in E_-$, we have $\psi(p_0)<\overline{m}$, and thus
$$
\psi(\sigma_{p_0}(t))\le \psi(p_0)<\overline{m}
\qquad \forall t\ge 0.
$$
Hence
$$
\sigma_{p_0}(t)\in E_-\subset U \qquad \forall t\ge 0,
$$
and thus
$$
q_\psi(\sigma_{p_0}(t))\le -\delta<0 \qquad \forall t\ge 0.
$$
On the other hand, by \eqref{eq1:statprop1sect7}, we have
$$
\frac{d}{dt}w(\sigma_{p_0}(t))=\langle \nabla_{g_0}^\top w,\sigma_{p_0}^\prime(t)\rangle_{g_0}=-q_\psi(\sigma_{p_0}(t))\,w(\sigma_{p_0}(t)).
$$
Therefore
$$
w(\sigma_{p_0}(t)) = w(p_0)e^{\int_0^t -q_\psi(\sigma_{p_0}(s))\,ds},
$$
and, since $-q_\psi\ge \delta>0$ on $E_-$, we obtain
$$
|w(\sigma_{p_0}(t))| \geq |w(p_0)|e^{\delta t}.
$$
Again, since $w$ is continuous on the compact set $\partial D_0$, it is bounded there. Hence the previous estimate is possible only if
$$
w(p_0)=0.
$$
Since $p_0\in E_-$ is arbitrary, then $w\equiv0$ on $E_-$. This, together with
$\partial_{\nu_{g_0}}w=0$ on $\partial D_0$ implies that $w\equiv0$
in $D_0$, by the same unique continuation argument as in \textit{Cases (i)} and \textit{(ii)}.

Finally, since $\eta=w/W_\psi$ on $\partial D_0$, we obtain $\eta=0$, and so $(w,\eta)=(0,0)$. The proof is complete.
\end{proof}

\section{Proof of Theorem \ref{thm:main}}

We apply a classical transversality argument, stated in many books and papers, in particular we refer to the version stated in \cite[Theorem 5.4]{Henry}. We recall here this theorem in our framework and for Fredholm maps of index zero.

\begin{theorem}\label{thm:Henryspecial}
Let $\mathsf X,\mathsf Y,\mathsf Z$ be $C^1$ Banach manifolds, let $\mathsf O\subset \mathsf X\times \mathsf Y$ be open, let $F:\mathsf O\to \mathsf Z$ be a $C^1$ map, and let $z\in \mathsf Z$.
Assume that, at every $(x,y)\in F^{-1}(z)$:
\begin{enumerate}[label=(\roman*),leftmargin=3em]
\item $D_xF(x,y)$ is Fredholm of index zero;
\item $D F(x,y):T_x\mathsf X\times T_y\mathsf Y\to T_z\mathsf Z$ is surjective.
\end{enumerate}
Further assume that
   \begin{enumerate}[label=(\roman*),leftmargin=3em,start=3]
\item the projection $F^{-1}(z)\to \mathsf Y$ is $\sigma$-proper.
\end{enumerate}
Then the set of $y\in \mathsf Y$ for which $z$ is a regular value of $F(\cdot,y)$ is residual in $\mathsf Y$.
\end{theorem}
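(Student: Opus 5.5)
The plan is to reduce the statement to the Sard--Smale theorem applied to the projection of the zero set $\mathcal S:=F^{-1}(z)$ onto $\mathsf Y$. Since the question is local in the charts of the $C^1$ Banach manifolds, I would work throughout in local charts. There $\mathsf X,\mathsf Y,\mathsf Z$ are open sets in Banach spaces $E_1,E_2,E_3$, and the tangent spaces are $E_1,E_2,E_3$ themselves.

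\textbf{Step 1 (structure of $\mathcal S$).} At each $(x,y)\in\mathcal S$, hypothesis (ii) says $DF(x,y):E_1\times E_2\to E_3$ is onto. By (i), $D_xF(x,y)$ has closed range of finite codimension. Hence the kernel $K:=\ker DF(x,y)$ is complemented. Indeed, $\ker D_xF(x,y)$ is finite dimensional, and one can build a complement from a complement of $\ker D_xF$ together with a finite-dimensional lift of the cokernel. The implicit function theorem for $C^1$ submersions then shows that $\mathcal S$ is a $C^1$ Banach submanifold near $(x,y)$, with $T_{(x,y)}\mathcal S=K$.

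\textbf{Step 2 (linear algebra).} Let $\Pi:\mathcal S\to\mathsf Y$, $\Pi(x,y)=y$, so that $D\Pi(x,y)=\mathrm{pr}_2|_K$. I would check directly the two identifications
$$\ker D\Pi(x,y)\cong\ker D_xF(x,y),\qquad E_2/\operatorname{Range}D\Pi(x,y)\cong E_3/\operatorname{Range}D_xF(x,y).$$
The second uses the surjectivity of $DF$: a class $[k]\in E_2/\operatorname{Range}D\Pi$ is sent to $[D_yF(x,y)k]$, and one checks that this map is well defined and bijective. Consequently $\Pi$ is a $C^1$ Fredholm map of index $0$. Moreover, $(x,y)$ is a regular point of $\Pi$ if and only if $D_xF(x,y)$ is surjective. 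Therefore $y$ is a regular value of $\Pi$ if and only if $z$ is a regular value of $F(\cdot,y)$, so it suffices to show that the regular values of $\Pi$ form a residual set.

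\textbf{Step 3 (Sard--Smale with $\sigma$-properness).} I expect this step to be the main obstacle. Since the index is $0$, $C^1$ regularity suffices for Smale's theorem ($k>\max(0,\mathrm{index})$). The difficulty is that Smale's theorem needs a countable local cover, or properness. I would use the local normal form of Fredholm maps: near each point of $\mathcal S$, $\Pi$ is proper on a small closed ball $B$. On such a ball the image of the critical set is closed, by properness and closedness of the critical set. It is also nowhere dense, by the finite-dimensional Sard theorem applied to the local Lyapunov--Schmidt reduction $\ker\to\operatorname{coker}$, which is a $C^1$ map between spaces of equal dimension.

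To globalize, take the sets $M_l$ provided by (iii). I would show that for each $l$ the set $C_l:=\Pi(\mathrm{Crit}(\Pi)\cap M_l)$ is closed: if $y_n\to y$ with $y_n\in C_l$, then the $\sigma$-properness gives a convergent subsequence in $M_l$, and the critical set is closed. To show that $C_l$ is nowhere dense, I would argue as follows. Suppose $C_l$ contained an open set $V$. Around a point $y\in V$, $\Pi^{-1}(\bar y)\cap M_l$ is compact for every $\bar y$ near $y$, so $\Pi^{-1}(\overline{V'})\cap M_l$ is covered by finitely many local balls $B_j$ as above, for a small closed neighborhood $V'$ of $y$. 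Then $V'\subset\bigcup_j\Pi(\mathrm{Crit}\cap B_j)$, a finite union of closed nowhere dense sets, which contradicts the Baire property of $\mathsf Y$. Hence the critical values $\bigcup_l C_l$ form a meager set, and its complement is residual, which is the conclusion.
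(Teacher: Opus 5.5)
The paper does not prove this statement; it imports it from \cite[Theorem 5.4]{Henry}, specialized to index zero, so there is no in-paper argument to compare with. Your proposal is the standard proof of such transversality theorems, and it is correct. The following steps all check out:
the complement of $K=\ker DF(x,y)$ built from a complement of $\ker D_xF$ and a finite-dimensional lift of $\operatorname{coker}D_xF$;
the identity $\operatorname{Range}D\Pi=(D_yF)^{-1}(\operatorname{Range}D_xF)$, which is closed;
the isomorphisms $\ker D\Pi\cong\ker D_xF$ and $E_2/\operatorname{Range}D\Pi\cong E_3/\operatorname{Range}D_xF$;
and the resulting equivalence between regular values of $\Pi$ and parameters $y$ for which $z$ is regular for $F(\cdot,y)$.

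Compared with the bare citation, your self-contained route shows why hypothesis (iii) is needed. The spaces in the application are non-separable H\"older spaces, so the Lindel\"of step in Smale's proof (a countable cover of $\mathcal S$ by locally proper neighbourhoods) is unavailable. You rightly replace it by the sets $M_l$. The citation, in exchange, covers nonzero index at no cost.

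\emph{One step needs a better justification than the one you give.} Compactness of each fiber $\Pi^{-1}(\bar y)\cap M_l$ separately does not produce a finite cover of $\Pi^{-1}(\overline{V'})\cap M_l$. Argue with the fiber over $y$ only, as follows.
Cover $\Pi^{-1}(y)\cap M_l$ by the interiors of finitely many locally proper neighbourhoods $B_j$. Take them as products $\bar B_1\times\bar B_2$ in the normal-form coordinates $(u,v)\mapsto(u,\eta(u,v))$, with $\bar B_2$ a ball in the finite-dimensional kernel.
Suppose no closed neighbourhood $V'$ of $y$ satisfied $\Pi^{-1}(\overline{V'})\cap M_l\subset\bigcup_j\operatorname{int}B_j$. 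Then you would get $s_n\in M_l\setminus\bigcup_j\operatorname{int}B_j$ with $\Pi(s_n)\to y$.
By $\sigma$-properness, a subsequence converges to a point of $\Pi^{-1}(y)\cap M_l$. This is impossible, since $\bigcup_j\operatorname{int}B_j$ is open and contains that fiber.
Also, the Baire property plays no role at that point: a finite union of nowhere dense sets is already nowhere dense.
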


Let $D_0$ be the domain fixed in Section 2 and assume also that $N-1 \notin \SpecNeu(-\Delta_{g_0};D_0)$. Recall that for $h\in \DiffQ$ we have set $D_h:=h(D_0)$ and $g_h:=h^*g_0$, and denote by
$$
\mathscr A:=\bigl\{h\in\DiffQ;\ N-1\notin\SpecNeu(-\Delta_{g_0};D_h)\bigr\}
$$
the set of admissible diffeomorphisms. By Lemma \ref{lem1:sect8} below, $\mathscr A$ is an open subset of $\DiffQ$ containing $\Id$, and thus it is an admissible choice for $\Param$ in Sections 2--7. Moreover $\mathscr A$ is a $C^1$ Banach manifold modeled on $\Y$, hence it is locally completely metrizable and thus a Baire space. In particular, the residual subsets of $\mathscr A$ are dense in $\mathscr A$. 

We shall apply Theorem
\ref{thm:Henryspecial} with
\beq\label{eq:choiceofspaces}
\mathsf X:=C^{2,\alpha}(\overline{D_0}), \quad \mathsf Y:=\mathscr A, \quad \mathsf Z:=C^{0,\alpha}(\overline{D_0})\times C^{1,\alpha}(\partial D_0), \quad \mathsf O:=\mathsf X\times\mathsf Y,
\eeq
to the map $F$ defined in \eqref{eq:defF}, with $\Param=\mathscr A$, and to the
value $z=(0,0)$.

\begin{lemma}\label{lem1:sect8}
$\mathscr A$ is an open subset of $\DiffQ$, and $\Id \in \mathscr A$.
\end{lemma}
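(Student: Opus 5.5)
The membership $\Id\in\mathscr A$ is immediate: $D_{\Id}=D_0$, and $N-1\notin\SpecNeu(-\Delta_{g_0};D_0)$ is a standing assumption. The real content is openness. Fix $\bar h\in\mathscr A$. By Lemma \ref{lem1:sect2} and the isometry $h:(D_0,g_h)\to(D_h,g_0)$, the Neumann spectrum of $-\Delta_{g_0}$ on $D_h$ coincides with the Neumann spectrum of $-\Delta_{g_h}$ on the fixed domain $D_0$. So it suffices to show the following: if $N-1$ is not a Neumann eigenvalue of $-\Delta_{g_{\bar h}}$ on $D_0$, then the same holds for $g_h$ whenever $h$ is close to $\bar h$.

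I would argue by contradiction. Suppose $h_n\to\bar h$ in $\DiffQ$ and $u_n\in H^1(D_0)$ satisfy
$$
\int_{D_0}\langle\nabla_{g_{h_n}}u_n,\nabla_{g_{h_n}}\varphi\rangle_{g_{h_n}}\,d\mu_{g_{h_n}}=(N-1)\int_{D_0}u_n\varphi\,d\mu_{g_{h_n}}\qquad\forall\varphi\in H^1(D_0),
$$
normalized by $\|u_n\|_{L^2(D_0,d\mu_{g_{h_n}})}=1$. By the proof of Proposition \ref{prop:FC1}, $g_{h_n}\to g_{\bar h}$ and $g_{h_n}^{-1}\to g_{\bar h}^{-1}$ in $C^{2,\alpha}$. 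In the chart $\pi$ the metrics are therefore uniformly equivalent to the Euclidean one, as in \eqref{eq:unifposdefinvG_n}, and the volume densities $\sqrt{\det g_{h_n}}$ converge uniformly to $\sqrt{\det g_{\bar h}}$ and are bounded away from zero. Testing with $\varphi=u_n$ gives $\|\nabla u_n\|_{L^2}^2\le C(N-1)$, so $(u_n)$ is bounded in $H^1(D_0)$.

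By Rellich, up to a subsequence $u_n\rightharpoonup u$ in $H^1$ and $u_n\to u$ in $L^2$. The limit therefore satisfies $\|u\|_{L^2(d\mu_{g_{\bar h}})}=1$. Next, pass to the limit in the weak formulation. In local coordinates the integrands are $g_{h_n}^{ij}\sqrt{\det g_{h_n}}\,\partial_iu_n\partial_j\varphi$ and $u_n\varphi\sqrt{\det g_{h_n}}$; the coefficients converge uniformly while $\nabla u_n\rightharpoonup\nabla u$ weakly in $L^2$, so the limit equation holds. Hence $u$ is a nontrivial weak Neumann eigenfunction of $-\Delta_{g_{\bar h}}$ on $D_0$ with eigenvalue $N-1$. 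This contradicts $\bar h\in\mathscr A$, and so a neighborhood of $\bar h$ lies in $\mathscr A$.

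One step needs care: matching the spectrum as defined (with $C^{2,\alpha}$ or $H^2$ eigenfunctions) with the weak $H^1$ formulation. Since the weak formulation encodes the Neumann condition naturally, it suffices to note that weak Neumann eigenfunctions are $H^2$, and in fact $C^{2,\alpha}$. This follows from the regularity argument of Corollary \ref{cor:regwKernel} and Remark \ref{rem:Schauder-lifting}, using \cite[Theorem 6.29]{LiebermanODP} in the chart $\pi$; there $-\Delta_{g_h}$ has $C^{1,\alpha}$ leading coefficients and the boundary operator $\partial_{\nu_{g_h}}$ is uniformly oblique, as shown in \emph{Claim III}. With that in place, the two notions of spectrum agree and the contradiction argument is rigorous. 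I expect the only real subtlety to be this identification together with the uniform coercivity in $n$; everything else is routine compactness.
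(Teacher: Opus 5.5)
Your argument is correct, but it takes a different route from the paper's proof. Both proofs first use the isometry $h:(D_0,g_h)\to(D_h,g_0)$ to move to the Neumann spectrum of $-\Delta_{g_h}$ on the fixed domain $D_0$.

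\textbf{The paper's route.} It works with the ordered eigenvalues $\lambda_j(D_0,g_h)$. Since $h\mapsto g_h$ is continuous, the min--max characterization makes each $\lambda_j(D_0,g_h)$ continuous in $h$. For $h_0\in\mathscr A$ the value $N-1$ lies in a gap $\lambda_j(D_0,g_{h_0})<N-1<\lambda_{j+1}(D_0,g_{h_0})$, and this gap persists for $h$ near $h_0$.

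\textbf{Your route.} You never use the ordering of the spectrum. Instead you show that the set of $h$ for which $N-1$ is an eigenvalue is sequentially closed: from normalized eigenfunctions $u_n$ you obtain a uniform $H^1$ bound, apply Rellich, and extract a nontrivial limit eigenfunction for $g_{\bar h}$.

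\textbf{Comparison.} The paper's argument is shorter and yields more, namely continuity of every eigenvalue. It takes the min--max continuity as standard, without proof. Your argument is self-contained and uses only uniform convergence of the metric coefficients and volume densities.

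\textbf{Small points to tighten.}
\begin{itemize}
\item Passing from ``$\mathscr A$ is not open'' to a sequence $h_n\to\bar h$ with $h_n\notin\mathscr A$ uses first countability. This holds because $\DiffQ$ is a Banach manifold.
\item Lemma \ref{lem1:sect2} concerns the CMC problem, not the eigenvalue problem. The identification of the spectra should be attributed to the isometry directly.
\item Your regularity step is needed only if the spectrum is defined through strong eigenfunctions. In that case, note that the argument of Corollary \ref{cor:regwKernel} starts from $H^2$, not $H^1$. You first need that a weak $H^1$ eigenfunction coincides with the $H^2$ solution of the shifted problem given by \cite[Theorem 6.29]{LiebermanODP}. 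This follows from uniqueness of weak solutions of the shifted problem, since the shifted form is coercive on $H^1(D_0)$ for every $\tau>0$.
\end{itemize}
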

\begin{proof}
By assumption $N-1 \notin \SpecNeu(-\Delta_{g_0};D_0)$, and this means that $\Id\in\mathscr A$. To prove that $\mathscr A$ is open we first observe that, for any $h \in \DiffQ$, since $h:(D_0,g_h)\to(D_h,g_0)$ is an isometry, we have
$$
\SpecNeu(-\Delta_{g_0};D_h)=\SpecNeu(-\Delta_{g_h};D_0).
$$
The spectrum $\SpecNeu(-\Delta_{g_h};D_0)$ consists of a nondecreasing sequence $\bigl(\lambda_j(D_0,g_h)\bigr)_{j\in\N}$ of Neumann eigenvalues of $-\Delta_{g_h}$ on $D_0$, repeated according to their multiplicity. 
Since $h\mapsto g_h$ is continuous from $\DiffQ$ into the space of $C^{2,\alpha}$ symmetric $2$-tensor fields on $\overline{D_0}$, the min--max characterization of each ordered eigenvalue implies that the map $h\mapsto\lambda_j(D_0,g_h)$ is continuous on $\DiffQ$ for every $j\in\N$. Now let $h_0\in\mathscr A$. 
Since the eigenvalues are ordered and $\lambda_0(D_0,g_{h_0})=0<N-1$, there is $j\in\N$ such that $\lambda_j(D_0,g_{h_0})<N-1<\lambda_{j+1}(D_0,g_{h_0})$. 
By continuity, these two inequalities hold for every $h$ in a neighborhood of $h_0$, and for such $h$ no eigenvalue equals $N-1$. Hence $\mathscr A$ is open and the proof is complete.
\end{proof}
Next we establish the surjectivity of the total differential of $F$.
\begin{proposition}\label{prop1sect8}
Let $(\psi,h)\in C^{2,\alpha}(\overline{D_0})\times\mathscr A$ be such that
$F(\psi,h)=(0,0)$. Then the total differential
$$
DF(\psi,h):C^{2,\alpha}(\overline{D_0})\times T_h\mathscr A\longrightarrow
C^{0,\alpha}(\overline{D_0})\times C^{1,\alpha}(\partial D_0)
$$
is surjective.
\end{proposition}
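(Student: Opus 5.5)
We first reduce to the case $h=\Id$, and then combine the Fredholm property of $D_\psi F$ with Theorem \ref{thm1sect7}.

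\textbf{Reduction to $h=\Id$.} Let $(\psi,h)$ be a zero of $F$, and set $D_1:=D_h$ and $\varphi:=\psi\circ h^{-1}$. By Lemma \ref{lem1:sect2}, $\varphi$ solves the problem on $D_1$. Since $D_1$ is a $C^{3,\alpha}$ domain with $D_1\Subset Q$, it can play the role of $D_0$ in Sections 2--7, and $N-1\notin\SpecNeu(-\Delta_{g_0};D_1)$ because $h\in\mathscr A$. Denote by $\widehat F$ the map built on $D_1$. Composition with $h$ gives Banach isomorphisms $C^{k,\alpha}(\overline{D_1})\to C^{k,\alpha}(\overline{D_0})$, and similarly on the boundaries. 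The map $k\mapsto k\circ h$ is a local diffeomorphism of $\DiffQ$ near $\Id$ onto a neighborhood of $h$, with differential $Y\mapsto Y\circ h$; this matches the identification \eqref{eq:tangent_space_DiffQ}. The metrics satisfy $g_{k\circ h}=h^*(k^*g_0)$. Hence
$$F(\varphi'\circ h,k\circ h)=\bigl(F_1\text{-part},F_2\text{-part}\bigr)\text{ of }\widehat F(\varphi',k)\circ h,$$
since $h:(D_0,g_{k\circ h})\to(D_1,k^*g_0)$ is an isometry, exactly as in Lemma \ref{lem1:sect2}. Differentiating at $(\varphi,\Id)$ shows that $DF(\psi,h)$ is conjugate, via isomorphisms, to $D\widehat F(\varphi,\Id)$. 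So it suffices to prove surjectivity at $(\psi,\Id)$ with $F(\psi,\Id)=(0,0)$, on a general admissible $D_0$.

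\textbf{Closed range of finite codimension.} By Proposition \ref{prop:Fredholm-general-parameter}, $D_\psi F(\psi,\Id)$ is Fredholm of index zero into $\mathsf Z=C^{0,\alpha}(\overline{D_0})\times C^{1,\alpha}(\partial D_0)$. Therefore the range $R$ of $DF(\psi,\Id)$ contains a closed subspace of finite codimension, so $R$ is closed with finite codimension. Suppose $R\neq\mathsf Z$. Then there is a finite-dimensional complement, and we must produce a nonzero annihilator that can be tested against Theorem \ref{thm1sect7}, which is phrased in $L^2$. Set $K:=\ker D_\psi F(\psi,\Id)$ and let $R_\psi$ be the range of $D_\psi F(\psi,\Id)$ in $\mathsf Z$.

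The first claim is that $R_\psi$ equals the set of $(f_1,f_2)\in\mathsf Z$ that are $L^2\times L^2$-orthogonal to $\ker\bigl((D_\psi F(\psi,\Id))'\bigr)$, which by Corollary \ref{cor:regwKernel} consists of the pairs $(w,w/W_\psi)$. We argue as follows.
\begin{itemize}
\item The $H^2$ realization $\mathbb D_\psi F$ of Remark \ref{rem:H12-realization} is Fredholm of index zero. Its range is therefore the annihilator of its adjoint kernel, which has dimension $\dim\ker\mathbb D_\psi F$.
\item By Remark \ref{rem:Schauder-lifting}, $H^2$-solutions with Hölder data lie in $C^{2,\alpha}$. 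Hence the $H^2$-kernel coincides with the $C^{2,\alpha}$-kernel.
\item For the same reason, $R_\psi=\operatorname{Range}(\mathbb D_\psi F)\cap\mathsf Z$.
\end{itemize}
Since $(\mathsf Z\cap\operatorname{Range}\mathbb D)$ has codimension $\dim K$ and the adjoint kernel has dimension $\dim K$, this gives the characterization of $R_\psi$ with finite codimension equal to $\dim\ker\bigl((D_\psi F)'\bigr)$.

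\textbf{Conclusion.} Now let $\{(w_i,w_i/W_\psi)\}_{i=1}^m$ be a basis of $\ker\bigl((D_\psi F(\psi,\Id))'\bigr)$. Consider the finite-rank map
$$\Lambda:\Y\to\R^m,\qquad \Lambda(Y)_i:=\bigl(D_hF(\psi,\Id)[Y],(w_i,w_i/W_\psi)\bigr)_{L^2(D_0)\times L^2(\partial D_0)}.$$
If $\Lambda$ were not onto, there would be $c\neq0$ with $c\cdot\Lambda(Y)=0$ for all $Y$. Taking $(w,\eta):=\sum c_i(w_i,w_i/W_\psi)$, which is a nonzero element of the adjoint kernel, this would contradict Theorem \ref{thm1sect7}. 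So $\Lambda$ is onto.

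Given $z\in\mathsf Z$, choose $Y$ with $\Lambda(Y)_i=(z,(w_i,w_i/W_\psi))$ for each $i$. Then $z-D_hF(\psi,\Id)[Y]$ is orthogonal to every $(w_i,w_i/W_\psi)$, so it lies in $R_\psi$, and surjectivity follows.

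The main obstacle is the range characterization in the Hölder setting. It requires checking that the regularity of Remark \ref{rem:Schauder-lifting} applies to $L_\psi$ with general Neumann data, and correctly matching the codimension counts between the $H^2$ and $C^{2,\alpha}$ realizations. The reduction to $\Id$ is routine bookkeeping once the equivariance identity above is verified.
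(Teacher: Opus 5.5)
Your proof is correct. The reduction to $h=\Id$ is the paper's Step 1. Your claim that $k\mapsto k\circ h$ is a local diffeomorphism with differential $Y\mapsto Y\circ h$ holds because $\Theta_{\Id}(Y)\circ h=\Theta_h(Y\circ h)$, so the map is the identity in the charts $\phi_{\Id}$, $\phi_h$; the paper uses the curves $h_t=k_t\circ h$ instead.

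Where you genuinely differ is in how the duality argument at $\Id$ is organized. The paper first applies Hahn--Banach to the closed subspace $\operatorname{Range}\mathbb D_\psi F(\psi,\Id)+\operatorname{Range}D_hF(\psi,\Id)$ of $L^2(D_0)\times H^{1/2}(\partial D_0)$. It kills the annihilator with Remark \ref{rem:H12-realization} and Theorem \ref{thm1sect7}, and only at the end lifts a Sobolev preimage to $C^{2,\alpha}$ via Remark \ref{rem:Schauder-lifting}.

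You lift first. Three facts combine:
\begin{itemize}
\item the closed range theorem for $\mathbb D_\psi F(\psi,\Id)$;
\item the identity $R_\psi=\operatorname{Range}\mathbb D_\psi F(\psi,\Id)\cap\mathsf Z$, from Remark \ref{rem:Schauder-lifting};
\item the identification $\eta=\iota(w/W_\psi)$ of Remark \ref{rem:H12-realization}, which turns the $H^{1/2}$--$H^{-1/2}$ pairing into the $L^2\times L^2$ product on $\mathsf Z$.
\end{itemize}
Together they exhibit $R_\psi$ as the $L^2$-orthogonal, inside $\mathsf Z$, of the finite-dimensional adjoint kernel. Theorem \ref{thm1sect7} then says exactly that the finite-rank map $\Lambda$ is onto, and the rest is linear algebra.

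Your route gives, as a byproduct, an explicit representation of the cokernel of the H\"older operator. It also needs no closedness or Hahn--Banach argument for the sum of the two ranges. The paper's route never has to describe $R_\psi$ and uses the Schauder lifting only once, at the very end.

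Two passages in your write-up are superfluous. The opening ``Suppose $R\neq\mathsf Z$'' is unnecessary, since your final argument is direct. The codimension count is also unnecessary: the closed range theorem plus $R_\psi=\operatorname{Range}\mathbb D_\psi F\cap\mathsf Z$ already give the characterization. Taken on its own, the count would also require checking that the functionals $z\mapsto\bigl(z,(w_i,w_i/W_\psi)\bigr)$ are linearly independent on $\mathsf Z$; this is true by density, but you do not need it.
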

\begin{proof}
We divide the proof into three steps.

\medskip

\noindent\textbf{Step 1}: Reduction to surjectivity of $DF(\psi,\Id)$.\\

This is an easy consequence of the fact that if we consider a domain $D_h$, with $h \in \mathscr A$, we could start from the map $G$ analogous to $F$, using as reference domain $D_h$ and then prove the surjectivity of $DG(\varphi,\Id)$, for some $\varphi \in C^{2,\alpha}(\overline{D_h})$. 

More precisely, let $(\psi,h)\in C^{2,\alpha}(\overline{D_0})\times\mathscr A$ and
let
$$
\mathscr A_h:=\bigl\{k\in\DiffQ;\
N-1\notin\SpecNeu(-\Delta_{g_0};k(D_h))\bigr\}
$$
be the set of admissible diffeomorphisms associated with the reference domain
$D_h$. Since $k(D_h)=(k\circ h)(D_0)$, we can also write
\beq\label{eq:Ah-vs-A}
\mathscr A_h=\bigl\{k\in\DiffQ;\ k\circ h\in\mathscr A\bigr\}.
\eeq
In particular, $\mathscr A_h$ is open, and $\Id\in\mathscr A_h$ if and only if $h\in\mathscr A$.
Let 
$$
G:C^{2,\alpha}(\overline{D_h})\times\mathscr A_h \longrightarrow
C^{0,\alpha}(\overline{D_h})\times C^{1,\alpha}(\partial D_h),
$$
$G=G(u,k)$, be the map defined by the same expressions as $F$ in \eqref{eq:defF}, with $D_h$
in place of $D_0$, thus the metric associated with the parameter $k$ is
$g_k=k^*g_0$, now on $D_h$. As pointed out in Section 2, $D_h$ is a domain of class $C^{3,\alpha}$ satisfying \eqref{eq:standing-assumptions-D0}.
We also introduce the Banach space isomorphisms
$$
\mathcal Q_h: C^{2,\alpha}(\overline{D_0})\longrightarrow
C^{2,\alpha}(\overline{D_h}),
\qquad
\mathcal Q_hu:=u\circ h^{-1},
$$
$$
\mathcal P_h: C^{0,\alpha}(\overline{D_h})\times C^{1,\alpha}(\partial D_h) \longrightarrow C^{0,\alpha}(\overline{D_0})\times C^{1,\alpha}(\partial D_0), \qquad \mathcal P_h(f,g):=(f\circ h,g\circ h).
$$
Let $k\in\mathscr A_h$. Then $k\circ h\in\mathscr A$ by \eqref{eq:Ah-vs-A}, so that $F(u\circ h,k\circ h)$ is well defined. Moreover the map
$$
h:\bigl(D_0,(k\circ h)^*g_0\bigr)\longrightarrow\bigl(D_h,k^*g_0\bigr)
$$
is an isometry, and arguing as in the proof of Lemma
\ref{lem1:sect2}, with $g_0$ replaced by $k^*g_0$, we obtain
\beq\label{eq:relationFG}
F(u\circ h,k\circ h)=\mathcal P_h\,G(u,k)\quad \forall u\in C^{2,\alpha}(\overline{D_h}).
\eeq
In particular, setting
\beq\label{eq1addprop1sect8}
\varphi:=\psi\circ h^{-1},
\eeq
it holds $F(\psi,h)=\mathcal P_hG(\varphi,\Id)$, and thus $G(\varphi,\Id)=(0,0)$.

Differentiating \eqref{eq:relationFG} with respect to the first variable and computing at $(\varphi,\Id)$, and taking into account \eqref{eq1addprop1sect8}, we check that
\beq\label{eq:relation-Dpsi}
D_\psi F(\psi,h)[v]=\mathcal P_h\,D_\varphi G(\varphi,\Id)\bigl[\mathcal Q_hv\bigr] \qquad \forall v\in C^{2,\alpha}(\overline{D_0}).
\eeq
On the other hand, let $Y\in\Y$ and let $h_t:=\Theta_h(tY\circ h)$ and
$k_t:=\Theta_{\Id}(tY)$ be the curves considered at the end of Section 4, defined for all $|t|$ sufficiently small, so that $k_t\in \mathscr A_h$.
Then by definition $h_t=k_t\circ h \in  \mathscr A$ and $\frac{d}{dt}\big|_{t=0}h_t=Y\circ h$. Taking $u=\varphi$ and
$k=k_t$ in \eqref{eq:relationFG}, differentiating with respect to $t$, and computing at $t=0$, we get
\beq\label{eq:relation-Dh}
D_hF(\psi,h)[Y\circ h]=\mathcal P_h\,D_kG(\varphi,\Id)[Y].
\eeq
Finally, since $\mathcal Q_h$ and $\mathcal P_h$ are isomorphisms and, by
\eqref{eq:tangent_space_DiffQ}, every element of $T_h \mathscr A$ is of the form $Y\circ h$
with $Y\in\Y$, the identities \eqref{eq:relation-Dpsi} and \eqref{eq:relation-Dh} show
that in order to prove that $DF(\psi,h)$ is surjective it is sufficient to prove that
$$
\operatorname{Range}D_\varphi G(\varphi,\Id)+\operatorname{Range}D_kG(\varphi,\Id) =C^{0,\alpha}(\overline{D_h})\times C^{1,\alpha}(\partial D_h),
$$
and this completes the proof of \textit{Step 1}.\\

Thanks to the above argument, and since $D_h$ is a domain of class $C^{3,\alpha}$ with $D_h\Subset Q$, $\mathscr A_h$ is an open neighborhood of $\Id$ in $\DiffQ$, and $\Y$ depends only on $Q$, all the results of Sections 2--7 hold with $(D_h,\mathscr A_h,G,\varphi)$ in place of $(D_0,\mathscr A,F,\psi)$. 
Hence, up to this renaming, we are reduced to proving the following statement, which is the content of \textit{Step 2} and \textit{Step 3}: if $(\psi,\Id)\in C^{2,\alpha}(\overline{D_0})\times\mathscr A$ satisfies $F(\psi,\Id)=(0,0)$, then $DF(\psi,\Id)$ is surjective. 
In particular, the spectral hypothesis on $D_0$ invoked in \textit{Step 2} below is, after this renaming, exactly the hypothesis $h\in\mathscr A$ of the statement.
\medskip 

\noindent\textbf{Step 2}: Proof of the surjectivity of $DF(\psi,\Id)$ in the
Sobolev framework.\\

We first observe that, since $T_\Id \mathscr A=\Y$ (see \eqref{eq:tangent_space_DiffQ}), the total differential $DF(\psi,\Id)$ is defined on $C^{2,\alpha}(\overline{D_0}) \times \Y$ by
$$DF(\psi,\Id)(u,Y)=D_\psi F(\psi,\Id)[u] + D_h F(\psi,\Id)[Y],\qquad
\forall(u,Y)\in C^{2,\alpha}(\overline{D_0}) \times \Y,$$
with values in $C^{0,\alpha}(\overline{D_0})\times C^{1,\alpha}(\partial D_0)$. As pointed out at the beginning of Section 5 the operator $D_\psi F(\psi,\Id)$ extends to a bounded linear operator from $H^2(D_0)$ to $L^2(D_0)\times L^2(\partial D_0)$, still denoted by $D_\psi F(\psi,\Id)$.

Note that, since $F(\psi,\Id)=(0,0)$, we have $\partial_{\nu_{g_0}}\psi=0$, so that $\psi$ satisfies
\eqref{eq:Neumpsi}, and thus Theorem \ref{theo1sect5}, Corollary \ref{cor:regwKernel} and Remark \ref{rem:H12-realization} are applicable.

By Remark \ref{rem:H12-realization}, $D_\psi F(\psi,\Id)$, regarded as the bounded linear operator
$$
\mathbb D_\psi F(\psi,\Id):H^2(D_0)\longrightarrow L^2(D_0)\times H^{1/2}(\partial D_0), \qquad \mathbb D_\psi F(\psi,\Id) u=\bigl(L_\psi u,\partial_{\nu_{g_0}}u\bigr),
$$
is Fredholm of index zero. 
Moreover, as $C^{0,\alpha}(\overline{D_0})\times C^{1,\alpha}(\partial D_0)$ embeds continuously into $L^2(D_0)\times H^{1/2}(\partial D_0)$, we regard $D_h F(\psi,\Id)$ as a bounded linear operator from $\Y$ into $L^2(D_0)\times H^{1/2}(\partial D_0)$.

Since $\operatorname{Range}\mathbb D_\psi F(\psi,\Id)$ is closed and of finite codimension in $L^2(D_0)\times H^{1/2}(\partial D_0)$, the subspace
$$\operatorname{Range}\mathbb D_\psi F(\psi,\Id)+\operatorname{Range}D_h F(\psi,\Id)$$ 
is closed as well. By the Hahn--Banach theorem, in order to prove that
\beq\label{eq:sobolev-surjectivity}
\operatorname{Range}\mathbb D_\psi F(\psi,\Id)+\operatorname{Range}D_h F(\psi,\Id)=
L^2(D_0)\times H^{1/2}(\partial D_0)
\eeq
it is sufficient to show that every $\ell\in(L^2(D_0)\times H^{1/2}(\partial D_0))'$ vanishing on both ranges vanishes identically.

Let $\ell\in(L^2(D_0)\times H^{1/2}(\partial D_0))'$ annihilate $\operatorname{Range}\mathbb D_\psi F(\psi,\Id)$ and $\operatorname{Range}D_h F(\psi,\Id)$, and identify $\ell$ with a pair
$$
(w,\eta)\in L^2(D_0)\times H^{-1/2}(\partial D_0).
$$
Annihilating $\operatorname{Range}\mathbb D_\psi F(\psi,\Id)$ means precisely that $(w,\eta)\in\ker\bigl((\mathbb D_\psi F(\psi,\Id))'\bigr)$. Hence, by Remark \ref{rem:H12-realization}, $\eta=\iota\eta_0$ with $(w,\eta_0)\in\ker\bigl((D_\psi F(\psi,\Id))'\bigr)$, where $\iota:L^2(\partial D_0)\hookrightarrow H^{-1/2}(\partial D_0)$ is the canonical injection (see \eqref{eq:caninj}) and, by Theorem \ref{theo1sect5} and Corollary \ref{cor:regwKernel}, we infer that
$$
w\in C^{2,\alpha}(\overline{D_0}), \qquad L_\psi^*w=0\ \text{ in }D_0, \qquad \partial_{\nu_{g_0}}w=0\ \text{ on }\partial D_0, \qquad \eta_0=\frac{w}{W_\psi}\in C^{1,\alpha}(\partial D_0).
$$
Since $\eta=\iota\eta_0$ acts on $H^{1/2}(\partial D_0)$ by integration against $\eta_0$, the annihilation of $\operatorname{Range}D_h F(\psi,\Id)$ reads as
$$
\bigl(D_h F(\psi,\Id)[Y],(w,\eta_0)\bigr)_{L^2(D_0)\times L^2(\partial D_0)}=0
\qquad\forall\,Y\in\Y ,
$$
that is, \eqref{eq:statementteo1sect7} holds for $(w,\eta_0)$. Since
$\Id\in\mathscr A$, that is,
$$
N-1\notin\SpecNeu(-\Delta_{g_0};D_0),
$$
Theorem \ref{thm1sect7} gives $(w,\eta_0)=(0,0)$, hence $\eta=0$ and $\ell=0$. The proof of \textit{Step 2} is complete.
\medskip

\noindent\textbf{Step 3}: Conclusion.\\

Let $(f,g)\in C^{0,\alpha}(\overline{D_0})\times C^{1,\alpha}(\partial D_0)$. By \eqref{eq:sobolev-surjectivity} there exist $u\in H^2(D_0)$ and $Y\in\Y$ such that
$$
\mathbb D_\psi F(\psi,\Id) u+D_h F(\psi,\Id)[Y]=(f,g).
$$
Since $D_h F(\psi,\Id)[Y]\in C^{0,\alpha}(\overline{D_0})\times C^{1,\alpha}(\partial D_0)$, we have
$$
\mathbb D_\psi F(\psi,\Id) u=(f,g)-D_h F(\psi,\Id)[Y]\in C^{0,\alpha}(\overline{D_0})\times C^{1,\alpha}(\partial D_0).
$$
Then, by Remark \ref{rem:Schauder-lifting}, it follows that $u\in C^{2,\alpha}(\overline{D_0})$, so that $(u,Y)$ belongs to the domain
$C^{2,\alpha}(\overline{D_0})\times\Y$ of $DF(\psi,\Id)$ and
$$DF(\psi,\Id)(u,Y)=D_\psi F(\psi,\Id)[u]+D_h F(\psi,\Id)[Y]=(f,g).$$
Since $(f,g)$ was arbitrary, $DF(\psi,\Id)$ is surjective, and the conclusion follows from the reduction performed in \textit{Step 1}. The proof is complete.
\end{proof}

\begin{proof}[Proof of Theorem \ref{thm:main}]
We apply Theorem \ref{thm:Henryspecial}, with $\mathsf X$, $\mathsf Y$, $\mathsf Z$ and $\mathsf O$ as in \eqref{eq:choiceofspaces}, to the map $F$ defined in \eqref{eq:defF}, with $\Param=\mathscr A$, and to the value $z=(0,0)$. By Proposition \ref{prop:FC1} the map $F:\mathsf O\to\mathsf Z$ is of class $C^1$. We check that assumptions (i) and (ii) hold at every $(\psi,h)\in F^{-1}(0,0)$, and that (iii) holds as well.

Assumption (i) holds by Proposition \ref{prop:Fredholm-general-parameter}, which gives that $D_\psi F(\psi,h):\mathsf X\to\mathsf Z$ is Fredholm of index zero. Assumption (ii) is the content of Proposition \ref{prop1sect8}, taking into account that $T_\psi\mathsf X=C^{2,\alpha}(\overline{D_0})$ and $T_h\mathsf Y=T_h\mathscr A$. 
As for assumption (iii), Proposition \ref{prop:sigmaproper} gives that the projection $\Pi:F^{-1}(0,0)\to\mathscr A$ is $\sigma$-proper. More precisely, denoting by $\overline{\mathcal B_l(0)}$ the closed ball of radius $l$ centered
at $0$ in $C^{2,\alpha}(\overline{D_0})$ and setting
$$
M_l:=\bigl(\overline{\mathcal B_l(0)}\times\mathscr A\bigr)\cap F^{-1}(0,0),
\qquad l\in\N^+,
$$
its proof shows that $F^{-1}(0,0)=\bigcup_{l\ge1}M_l$, that each $M_l$ is closed in $F^{-1}(0,0)$, and that $\Pi|_{M_l}$ is proper for every $l\in\N^+$, in the sense that for any sequence $(\psi_n,h_n)_{n\in\N}$ in $M_l$ such that $h_n\longrightarrow h$ in $\mathscr A$, as $n\to+\infty$, there exists a subsequence $(\psi_{n_j},h_{n_j})_{j\in\N}$ converging in $C^{2,\alpha}(\overline{D_0})\times\mathscr A$ to an element $(\psi,h)\in M_l$.

By Theorem \ref{thm:Henryspecial} we have that
$$
\mathscr G:=\bigl\{h\in\mathscr A;\ (0,0)\text{ is a regular value of } F(\cdot,h)\bigr\}
$$
is residual in $\mathscr A$, hence it is dense in $\mathscr A$ because, as pointed out at the beginning of this section, $\mathscr A$ is a Baire space. Let $h\in\mathscr A$ and let $\varphi\in C^{2,\alpha}(\overline{D_h})$ be a solution of \eqref{eq:probh}. 
By Lemma \ref{lem1:sect2}, $\psi:=\varphi\circ h$ satisfies $F(\psi,h)=(0,0)$ and, by \eqref{eq:relation-Dpsi}, the operator $D_\psi F(\psi,h)$ is injective if and only if $\varphi$ is nondegenerate. Since $D_\psi F(\psi,h)$ is Fredholm of index zero, it is onto if and only if it is injective. Hence $\mathscr G$ coincides with the set defined in the statement of Theorem \ref{thm:main}.

Now, let $h\in\mathscr G$. We claim that, for every $l\in\N^+$, the fiber
$$
\bigl(\Pi|_{M_l}\bigr)^{-1}(\{h\})=\bigl\{\psi\in\overline{\mathcal B_l(0)};\ F(\psi,h)=(0,0)\bigr\}\times\{h\}
$$
is a finite set. Fix $l\in\N^+$ and assume by contradiction that there exists a sequence $(\psi_n,h)_{n\in\N}$ in $\bigl(\Pi|_{M_l}\bigr)^{-1}(\{h\})$ of pairwise distinct elements. Since $\Pi|_{M_l}$ is proper and the parameters are constantly equal to $h$, there exists a subsequence $(\psi_{n_j},h)_{j\in\N}$ converging in $C^{2,\alpha}(\overline{D_0})\times\mathscr A$ to some $(\bar\psi,h)\in M_l$. 
Moreover, as the $\psi_n$ are pairwise distinct, at most one of them equals $\bar\psi$, so that, discarding it if necessary, we may assume that $\psi_{n_j}\neq\bar\psi$ for all $j\in\N$. 
On the other hand, $h\in\mathscr G$, so that $D_{\psi}F(\bar\psi,h)$ is onto and, since it is Fredholm of index zero, it is an isomorphism. 
Then, by the implicit function theorem, there exists a neighborhood $U$ of $\bar\psi$ in $C^{2,\alpha}(\overline{D_0})$ such that $\bar\psi$ is the only element of $U$ satisfying $F(\cdot,h)=(0,0)$. This contradicts $\psi_{n_j}\to\bar\psi$ with $\psi_{n_j}\neq\bar\psi$ and $F(\psi_{n_j},h)=(0,0)$, and proves the claim.

Finally, the map $\varphi\mapsto\varphi\circ h$ is a bounded linear operator from $C^{2,\alpha}(\overline{D_h})$ into $C^{2,\alpha}(\overline{D_0})$ and, by Lemma \ref{lem1:sect2}, it restricts to a bijection between the solutions of \eqref{eq:probh} and the functions $\psi\in C^{2,\alpha}(\overline{D_0})$ satisfying $F(\psi,h)=(0,0)$. 
Hence, for every $R>0$, there exists $l\in\N^+$ such that the image under this map of the set of solutions $\varphi$ with $\|\varphi\|_{C^{2,\alpha}(\overline{D_h})}\leq R$ is contained in $\bigl\{\psi\in\overline{\mathcal B_l(0)};\ F(\psi,h)=(0,0)\bigr\}$, which is finite by the claim. Since the map is injective, this proves the finiteness assertion in the statement of the theorem. The proof is complete.
\end{proof}

\appendix

\section{Preliminaries of Riemannian Geometry}

In this section, we recall some basic definitions and results from Riemannian geometry that are used throughout the paper. For further details, we refer to \cite{LeeRM} and, for the divergence of tensor fields, to \cite{ONeill}.

Let $(M,g)$ be a Riemannian manifold of dimension $m$, possibly with boundary. Let $h:M\to M$ be a diffeomorphism. The pull-back of $g$ by $h$ is the Riemannian metric $h^*g$ on $M$ defined by
\beq\label{eq:defpullbackmetric}
(h^*g)_p(X,Y):=g_{h(p)}\bigl(dh_p(X),dh_p(Y)\bigr)
\eeq
for every $p\in M$ and $X,Y\in T_pM$. Equivalently,
$$h:(M,h^*g)\longrightarrow (M,g)$$
is an isometry.

Given a one-form $\omega$ on $M$, we denote by $\omega^{\sharp_g}$ its metric dual vector field,
characterized by
\beq\label{eq:defomegasharp}
g(\omega^{\sharp_g},X)=\omega(X)
\eeq
for every vector field $X$ on $M$. When the reference metric is clear from the context,
we omit the subscript and simply write $\omega^\sharp$.

Let $f\in C^1(M)$. The gradient $\nabla_g f$ is the unique vector field on
$M$ characterized by
\beq\label{eq:defgradient}
\dd f(X)=g(X,\nabla_g f)
\eeq
for every vector field $X$ on $M$. By \eqref{eq:defomegasharp} we also have
\beq\label{eq1:appendix1}
\nabla_g f=(\dd f)^{\sharp_g},
\eeq
and, in local coordinates $(x^1,\dots,x^m)$,
\beq\label{eq1bis:appendix1}
\nabla_g f=g^{ij}\partial_i f\,\partial_j .
\eeq

If $A$ is a $(1,1)$-tensor field on $M$, we denote by
$$
\tr A
$$
its trace, namely the pointwise trace of the endomorphism $A_p:T_pM\to T_pM$. In particular, if
$\{e_i\}_{i=1,\ldots,m}$ is a local $g$-orthonormal frame, then
$$
\tr A=\sum_{i=1}^m g(Ae_i,e_i).
$$

If $T$ is a $(0,2)$-tensor field, its trace with respect to $g$ is denoted by $\tr_gT$ and is defined by
$$
\tr_g T = \sum_{i=1}^m T(e_i,e_i),
$$
where $\{e_i\}_{i=1,\ldots,m}$ is any local $g$-orthonormal frame. If, in
local coordinates,
\begin{equation}\label{eq:T_localcoordinates}
T=T_{ij}\,\dd x^i\otimes \dd x^j,
\end{equation}
then
\beq\label{eq:traceloccord}
\tr_g T=g^{ij}T_{ij}.
\eeq

Let $\nabla$ be the Levi-Civita connection of $(M,g)$. For a vector field
$X$ on $M$, we regard $\nabla X$ as the $(1,1)$-tensor field defined by
$$
(\nabla X)(Z):=\nabla_ZX
$$
for every vector field $Z$. The divergence of $X$ with respect to $g$ is
defined by
$$
\divg_g X:=\tr(\nabla X).
$$
Hence, in a local $g$-orthonormal frame, we have
$$
\divg_g X=\sum_{i=1}^m g(\nabla_{e_i}X,e_i),
$$
while if \(X=X^i\partial_i\) in a local coordinate chart, then
\begin{equation}\label{eq:divlocalcoordinates}
\divg_g X=\frac{1}{\sqrt{\det g}} \partial_i\left(\sqrt{\det g}\,X^i\right),
\end{equation}
where $\det g$ denotes the determinant of the matrix $(g_{ij})$.
If $f\in C^2(M)$, the Hessian of $f$ with respect to $g$ is the symmetric $(0,2)$-tensor defined by
$$
D_g^2f(X,Z):=g(\nabla_X\nabla_g f,Z)
$$
for every pair of vector fields $X,Z$ on $M$. It can also be written as
$$
D_g^2f(X,Z)=X(Zf)-(\nabla_XZ)f.
$$
In local coordinates,
\beq\label{eq:Hesslocalcoord}
(D_g^2f)_{ij}=\nabla_i\nabla_j f=\partial_{ij}f-\Gamma_{ij}^k\partial_k f,
\eeq
where $\Gamma_{ij}^k$ are the Christoffel symbols of $g$, which are given by
\beq\label{eq:christoffel}
\Gamma_{ij}^k=\frac12 g^{ks}\left(\partial_ig_{js}+\partial_jg_{is}-\partial_sg_{ij}\right).
\eeq

The Laplace--Beltrami operator is defined by
$$
\Delta_g f:=\divg_g(\nabla_g f).
$$
It also satisfies
$$
\Delta_g f=\tr_g(D_g^2f),
$$
and, in local coordinates,
\beq\label{eq:Laplacianlocalcoord}
\Delta_g f=g^{ij}\left(\partial_{ij}f-\Gamma_{ij}^k\partial_k f\right).
\eeq

We next recall the divergence of a $(0,2)$-tensor field. If $T$ is a $(0,2)$-tensor field on $M$, we define $\divg_g T$ as the vector field
characterized by
$$
g(\divg_g T,X)=\sum_{i=1}^m (\nabla_{e_i}T)(e_i,X)
$$
for every vector field $X$, where $\{e_i\}_{i=1,\ldots,m}$ is any local
$g$-orthonormal frame.

In local coordinates, writing $T$ as in \eqref{eq:T_localcoordinates}, the
covariant components of $\divg_g T$ are
\beq\label{eq:defDivTensor}
(\divg_g T)_j=\nabla^iT_{ij}=g^{ik}\nabla_kT_{ij}.
\eeq
After raising the index with the metric, we obtain
$$
(\divg_g T)^j=\nabla_iT^{ij},
\qquad\text{where}\qquad
T^{ij}:=g^{ia}g^{jb}T_{ab}.
$$

Let $S$ and $T$ be $(0,2)$-tensor fields on $M$. Their pointwise scalar product induced by $g$ is defined by
\begin{equation}\label{eq:defscalarproducttensors}
\langle S,T\rangle_g:=g^{ia}g^{jb}S_{ij}T_{ab}.
\end{equation}
In a local $g$-orthonormal frame $(e_1,\dots,e_m)$, this reads
$$
\langle S,T\rangle_g=\sum_{i,j=1}^{m} S(e_i,e_j)\,T(e_i,e_j).
$$
If $M$ has no boundary (in this paper we only need $M=\SN$) and $Y$ is a $C^1$ vector field on $M$, we denote by $\mathcal{L}_Yg$ the Lie derivative of $g$ along $Y$, namely
\begin{equation}\label{eq:defLieDerivative}
\mathcal L_Y g:=\left.\frac{d}{dt}\right|_{t=0}(\Phi_Y^t)^*g,
\end{equation}
where $\Phi_Y$ is the flow of $Y$ and $\Phi_Y^t=\Phi_Y(t,\cdot)$. It is well known (see \cite[Corollary 12.33]{Lee}, combined with the
compatibility of $\nabla$ with $g$ and the symmetry of $\nabla$) that
\begin{equation}\label{eq:liederiv}
(\mathcal L_Y g)(X,Z)=g(\nabla_XY,Z)+g(X,\nabla_ZY)
\end{equation}
for every pair of vector fields $X,Z$.

Finally, let $\omega$ and $\theta$ be one-forms on $M$. The tensor
$\omega\otimes\theta$ is defined by
\begin{equation}\label{eq:defalphatensorbeta}
(\omega\otimes\theta)(X,Z):=\omega(X)\theta(Z),
\end{equation}
for any vector fields $X,Z$. We also set
\begin{equation}\label{eq:defsymalphatensorbeta}
\operatorname{Sym}(\omega\otimes\theta) := \frac12(\omega\otimes\theta+\theta\otimes\omega).
\end{equation}
If $T$ is a $(0,2)$-tensor field, we check that
\begin{equation}\label{eq:tensorpairing1}
\langle \omega\otimes\theta,T\rangle_g =T(\omega^{\sharp_g},\theta^{\sharp_g}),
\end{equation}
and, if in addition, $T$ is symmetric, then
\begin{equation}\label{eq:tensorpairing2}
\bigl\langle \operatorname{Sym}(\omega\otimes\theta),T\bigr\rangle_g =T(\omega^{\sharp_g},\theta^{\sharp_g}).
\end{equation}
\section{Some useful identities}
In this section we prove a technical result that is used in Section 4. We begin with the following preliminary Lemma.
\begin{lemma}\label{lem1appendix}
Let $M$ be a compact manifold of dimension $m$, possibly with
boundary. Let $(g_t)_{t\in (-\varepsilon,\varepsilon)}$ be a $C^1$ one-parameter family of
$C^{1,\alpha}$ Riemannian metrics on $M$ and set
$$
\dot g_0=\left.\frac{d}{dt}\right|_{t=0}g_t.
$$
In a fixed local coordinate system $(x^1,\dots,x^m)$, define
\beq\label{eqstat:lem1appB}
J_t:=\sqrt{\det(g_t)}.
\eeq
Then
$$
\left.\frac{d}{dt}\right|_{t=0}\log J_t=\frac12\,\tr_{g_0} \dot g_0.
$$
\end{lemma}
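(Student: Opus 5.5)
The statement to prove is Lemma~\ref{lem1appendix}: the first variation of the logarithm of the volume density is half the trace of $\dot g_0$. The plan is to reduce it to Jacobi's formula for the derivative of a determinant, applied pointwise in the fixed chart.

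Fix a point $x$ in the coordinate patch and write $G_t(x)=((g_t)_{ij}(x))_{ij}$. This is a $C^1$ curve of symmetric positive definite matrices, and its derivative at $t=0$ is $\dot G_0=((\dot g_0)_{ij})$. Since $(g_t)$ is a $C^1$ family of metrics with values in $C^{1,\alpha}$, pointwise evaluation is a bounded linear functional. Hence $t\mapsto (g_t)_{ij}(x)$ is $C^1$, with derivative $(\dot g_0)_{ij}(x)$.

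Because $\det G_t>0$, we have
$$\log J_t=\tfrac12\log\det G_t.$$
Differentiating with the chain rule and Jacobi's formula $\frac{d}{dt}\det G_t=\det G_t\,\tr(G_t^{-1}\dot G_t)$ gives
$$\left.\frac{d}{dt}\right|_{t=0}\log J_t=\tfrac12\tr(G_0^{-1}\dot G_0)=\tfrac12 g_0^{ij}(\dot g_0)_{ij}.$$
By \eqref{eq:traceloccord}, the right-hand side equals $\frac12\tr_{g_0}\dot g_0$.

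Jacobi's formula itself is obtained in two steps. First, $\det(I+sA)=1+s\tr A+O(s^2)$. Second, write $G_{t}=G_0\bigl(I+G_0^{-1}(G_t-G_0)\bigr)$ and use the multiplicativity of the determinant.

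There is no genuine obstacle here. The only point requiring care is regularity: we must make sure the pointwise derivative is the right notion. If the identity is needed as an identity of functions, for instance in $C^{0,\alpha}$, the same computation applies in the Banach algebra $C^{1,\alpha}$. In that setting $\det$ is polynomial in the entries and $\log$ is smooth on the positive range, which is bounded away from zero by compactness. So the derivative computed pointwise coincides with the Fréchet derivative.
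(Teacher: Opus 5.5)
Your proposal is correct and follows essentially the same route as the paper: write $\log J_t=\frac12\log\det(g_t)$, apply Jacobi's formula at $t=0$, and identify $\tr(g_0^{-1}\dot g_0)=g_0^{ij}(\dot g_0)_{ij}=\tr_{g_0}\dot g_0$ via \eqref{eq:traceloccord}. The one thing to make explicit is that this last index identification uses symmetry (of $\dot g_0$ or of $g_0^{-1}$), which the paper states; your sketch of Jacobi's formula and your regularity remarks are details the paper simply takes for granted.
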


\begin{proof}
Let $(x^1,\dots,x^m)$ be a fixed local coordinate system. Let us write
$$
g_t=(g_{ij}(t))_{i,j=1}^m,
$$
and define $J_t$ as in \eqref{eqstat:lem1appB}. By Jacobi's formula for the derivative of the determinant, we have
$$
\frac{d}{dt}\det(g_t)=\det(g_t)\, \tr\left(g_t^{-1}\frac{d}{dt}g_t
\right).
$$
Evaluating at $t=0$, we obtain
$$
\left.\frac{d}{dt}\right|_{t=0}\det(g_t) =\det(g_0)\,\tr(g_0^{-1} \dot g_0).
$$
Since $\dot g_0$ is symmetric, and recalling \eqref{eq:traceloccord}, we get
$$
\tr(g_0^{-1} \dot g_0)= g_0^{ij}(\dot g_0)_{ji}= g_0^{ij}(\dot g_0)_{ij}=\tr_{g_0}\dot g_0.
$$
Therefore,
$$
\left.\frac{d}{dt}\right|_{t=0}\det(g_t)=\det(g_0)\,\tr_{g_0}\dot g_0.
$$
Finally, since $J_t=\sqrt{\det(g_t)}$ (see \eqref{eqstat:lem1appB}), it follows that
$$
\begin{aligned}
\left.\frac{d}{dt}\right|_{t=0}\log J_t
&=\frac12 \left.\frac{d}{dt}\right|_{t=0}\log\det(g_t)\\
&=\frac12\,\tr_{g_0}\dot g_0,
\end{aligned}
$$
and the proof is complete.
\end{proof}

\begin{lemma}\label{lem2appendix}
Let $M$ be a compact manifold of dimension $m$, possibly with
boundary. Let $(g_t)_{t\in (-\varepsilon,\varepsilon)}$ be a $C^1$ one-parameter family of
$C^{1,\alpha}$ Riemannian metrics on $M$, and let $(X_t)_{t\in (-\varepsilon,\varepsilon)}$ be a $C^1$ one-parameter family of
$C^{1,\alpha}$ vector fields on $M$. 
Set
$$
\dot g_0=\left.\frac{d}{dt}\right|_{t=0}g_t,\qquad\dot X_0:=\left.\frac{d}{dt}\right|_{t=0}X_t.
$$
Then
\begin{equation}\label{eq1:lem2appendix}
\left.\frac{d}{dt}\right|_{t=0}\divg_{g_t}(X_t) = \divg_{g_0}(\dot X_0) + \frac12 \left\langle \nabla_{g_0}(\tr_{g_0}\dot g_0), X_0 \right\rangle_{g_0}.
\end{equation}
\end{lemma}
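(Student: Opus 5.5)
The plan is to work in a fixed local coordinate chart and use the coordinate expression \eqref{eq:divlocalcoordinates} of the divergence. Writing $J_t:=\sqrt{\det g_t}$ as in Lemma \ref{lem1appendix} and $X_t=X_t^i\partial_i$, we have
$$
\divg_{g_t}(X_t)=\frac{1}{J_t}\,\partial_i\bigl(J_tX_t^i\bigr)=\partial_iX_t^i+X_t^i\,\partial_i\log J_t .
$$
This splits the quantity into a term that depends on the metric only through $\log J_t$ and a term that is independent of the metric. This decomposition is the key step, because it reduces the whole computation to the first variation of $\log J_t$, which Lemma \ref{lem1appendix} already provides.

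Next we differentiate at $t=0$. The first term gives $\partial_i\dot X_0^i$. For the second term we use the product rule:
$$
\frac{d}{dt}\Big|_{t=0}\bigl(X_t^i\,\partial_i\log J_t\bigr)=\dot X_0^i\,\partial_i\log J_0+X_0^i\,\partial_i\Bigl(\frac{d}{dt}\Big|_{t=0}\log J_t\Bigr).
$$
Here we need to interchange $\partial_i$ and $d/dt$. This is justified because $t\mapsto g_t$ is $C^1$ with values in $C^{1,\alpha}$ tensors, so $t\mapsto\log J_t$ is $C^1$ with values in $C^{1,\alpha}$ functions, and $\partial_i$ is a bounded linear map from $C^{1,\alpha}$ to $C^{0,\alpha}$. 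The same holds for $X_t$. This interchange is the only point needing care; I expect it to be the main, though minor, obstacle. It is settled by the regularity hypotheses, with the derivatives understood in $C^{0,\alpha}$.

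By Lemma \ref{lem1appendix}, $\frac{d}{dt}|_{t=0}\log J_t=\frac12\tr_{g_0}\dot g_0$. The terms $\partial_i\dot X_0^i+\dot X_0^i\partial_i\log J_0$ recombine into $\divg_{g_0}(\dot X_0)$, again by \eqref{eq:divlocalcoordinates}. The remaining term is $\frac12X_0^i\,\partial_i(\tr_{g_0}\dot g_0)=\frac12\,d(\tr_{g_0}\dot g_0)(X_0)$, which equals $\frac12\langle\nabla_{g_0}(\tr_{g_0}\dot g_0),X_0\rangle_{g_0}$ by \eqref{eq:defgradient}.

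Since both sides of \eqref{eq1:lem2appendix} are coordinate-invariant, the identity proved in each chart holds globally on $M$, including up to the boundary, which gives the result.
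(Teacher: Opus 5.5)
Your proof is correct and follows essentially the paper's route: a fixed chart, $J_t=\sqrt{\det g_t}$, the coordinate formula \eqref{eq:divlocalcoordinates}, and Lemma \ref{lem1appendix} for $\frac{d}{dt}\big|_{t=0}\log J_t=\frac12\tr_{g_0}\dot g_0$. The only difference is cosmetic: you rewrite $\frac{1}{J_t}\partial_i(J_tX_t^i)=\partial_iX_t^i+X_t^i\,\partial_i\log J_t$ before differentiating, whereas the paper differentiates first and then recombines the terms through $k_0=\dot J_0/J_0$ and the product rule. You also make the interchange of $\partial_i$ and $d/dt$ explicit, which the paper uses tacitly.
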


\begin{proof}
Let $(x^1,\dots,x^m)$ be a fixed local coordinate system and write
$$
X_t= X_t^i\,\partial_i.
$$
Let $J_t$ be defined as in \eqref{eqstat:lem1appB}. Then, recalling the coordinate expression for the divergence of a vector
field (see \eqref{eq:divlocalcoordinates}), we have
$$
\divg_{g_t}(X_t)=\frac1{J_t}\,\partial_i(J_tX_t^i).
$$
Hence
\begin{eqnarray*}
\left.\frac{d}{dt}\right|_{t=0}\divg_{g_t}(X_t)
&=& \left.\frac{d}{dt}\right|_{t=0}
\left(\frac1{J_t}\,\partial_i(J_tX_t^i)\right)\\[12pt]
&=& \left[-\frac{\dot J_0}{J_0^2}\,\partial_i(J_0X_0^i)+\frac1{J_0}\,\partial_i(\dot J_0X_0^i)+\frac1{J_0}\,\partial_i(J_0\dot X_0^i) \right].
\end{eqnarray*}
Since
$$
\frac1{J_0}\,\partial_i(J_0X_0^i)=\divg_{g_0}(X_0)
$$
and
$$
\frac1{J_0}\,\partial_i(J_0\dot X_0^i)=\divg_{g_0}(\dot X_0),
$$
we obtain
\begin{equation}\label{eq2:lem2appendix}
\left.\frac{d}{dt}\right|_{t=0}\divg_{g_t}(X_t) =-\frac{\dot J_0}{J_0}\,\divg_{g_0}(X_0)+\frac1{J_0}\,\partial_i(\dot J_0X_0^i)+\divg_{g_0}(\dot X_0).
\end{equation}
Set
$$
k_0:=\frac{\dot J_0}{J_0}=\left.\frac{d}{dt}\right|_{t=0}\log J_t.
$$
Then $\dot J_0=J_0k_0$, and thus
$$
\frac1{J_0}\,\partial_i(\dot J_0X_0^i)=\frac1{J_0}\,\partial_i(J_0 k_0X_0^i)=\divg_{g_0}(k_0X_0).
$$
Substituting this identity into
\eqref{eq2:lem2appendix}, we obtain
\begin{equation}\label{eq3:lem2appendix}
\left.\frac{d}{dt}\right|_{t=0}\divg_{g_t}(X_t)
=-k_0\,\divg_{g_0}(X_0)+\divg_{g_0}(k_0 X_0)+\divg_{g_0}(\dot X_0).
\end{equation}
Now, by the product rule, we have
\begin{equation}\label{eq4:lem2appendix}
\begin{aligned}
\divg_{g_0}(k_0X_0)
&=\frac1{J_0}\,\partial_i(J_0 k_0X_0^i)\\
&=k_0\frac1{J_0}\,\partial_i(J_0X_0^i)
+(\partial_i k_0)X_0^i\\
&=k_0\,\divg_{g_0}(X_0)+\left\langle\nabla_{g_0}k_0,X_0\right\rangle_{g_0},
\end{aligned}
\end{equation}
and thus, from \eqref{eq3:lem2appendix} and \eqref{eq4:lem2appendix}, we get that
\begin{equation}\label{eq5:lem2appendix}
\left.\frac{d}{dt}\right|_{t=0}\divg_{g_t}(X_t)=\divg_{g_0}(\dot X_0)+\left\langle \nabla_{g_0} k_0, X_0\right\rangle_{g_0}.
\end{equation}
Moreover, by Lemma \ref{lem1appendix}, it holds
$$
k_0=\left.\frac{d}{dt}\right|_{t=0}\log J_t=\frac12\,\tr_{g_0} \dot g_0,
$$
so that
\begin{equation}\label{eq6:lem2appendix}
\left\langle \nabla_{g_0} k_0, X_0 \right\rangle_{g_0}=\frac12 \left\langle \nabla_{g_0}(\tr_{g_0} \dot g_0), X_0 \right\rangle_{g_0}.
\end{equation}
Finally, combining \eqref{eq5:lem2appendix} and
\eqref{eq6:lem2appendix}, we obtain
\eqref{eq1:lem2appendix}. The proof is complete.
\end{proof}


\begin{thebibliography}{10}

\bibitem{Calderon}
A. P. Calder\'on, \emph{Uniqueness in the Cauchy problem for partial differential equations},  Amer. J. Math. \textbf{80}, 16--36 (1958).

\bibitem{ChoePark2011}
J. Choe, S.-H. Park,
\emph{Capillary surfaces in a convex cone},
Math. Z. \textbf{267}, 875--886 (2011).

\bibitem{GarofaloLin} N. Garofalo, F.-H. Lin, \emph{Unique continuation for elliptic operators: a geometric-variational approach}, Comm. Pure Appl. Math. \textbf{40}, 347--366 (1987).

\bibitem{Grisvard} P. Grisvard, \emph{Elliptic Problems in Nonsmooth Domains},
Monographs and Studies in Mathematics, vol. 24, Pitman, Boston, MA, 1985.

\bibitem{Henry} D. Henry, \emph{Perturbation of the Boundary in Boundary-Value Problems of Partial Differential Equations},
London Mathematical Society Lecture Note Series, vol. 318,
Cambridge University Press, Cambridge, 2005.

\bibitem{Hirsch}
M. W. Hirsch, \emph{Differential Topology}, Graduate Texts in Mathematics, vol. 33, Springer-Verlag, New York, 1976.

 \bibitem{IPW} A. Iacopetti, F. Pacella, T. Weth, \emph{Existence of nonradial domains for overdetermined and isoperimetric problems in nonconvex cones}, Arch. Ration. Mech. Anal. \textbf{245}, 1005--1058 (2022).
 
 \bibitem{Isakov}
V. Isakov, \emph{Inverse Problems for Partial Differential Equations},
2nd ed., Applied Mathematical Sciences, vol. 127, Springer, New York, 2006.
 
\bibitem{Lee} J. M. Lee, \emph{Introduction to Smooth Manifolds}, 2nd ed., Graduate Texts in Mathematics, vol. 218, Springer, New York, 2013.

\bibitem{LeeRM}
J. M. Lee,
\emph{Introduction to Riemannian Manifolds}, 2nd ed., Graduate Texts in Mathematics, vol. 176, Springer, Cham, 2018.
 
\bibitem{LiebermanODP} G. M. Lieberman, \emph{Oblique Derivative Problems for Elliptic Equations}, World Scientific Publishing Co. Pte. Ltd., Hackensack, NJ, 2013.

\bibitem{LionsMagenes} J.-L. Lions, E. Magenes, \emph{Non-Homogeneous Boundary Value Problems and Applications. Vol. I},
Die Grundlehren der mathematischen Wissenschaften, vol. 181, Springer-Verlag, Berlin--Heidelberg--New York, 1972.

\bibitem{Micheletti1972} A. M. Micheletti, \emph{Perturbazione dello spettro dell'operatore di Laplace, in relazione ad una variazione del campo},
Ann. Scuola Norm. Sup. Pisa Cl. Sci. (3) \textbf{26}, 151--169 (1972).

\bibitem{Micheletti1973}
A. M. Micheletti, \emph{Perturbazione dello spettro di un operatore ellittico di tipo variazionale, in relazione ad una variazione del campo}, Ann. Mat. Pura Appl. (4) \textbf{97}, 267--281 (1973).

\bibitem{ONeill} B. O'Neill, \emph{Semi-Riemannian Geometry with Applications to Relativity},
Pure and Applied Mathematics, vol. 103, Academic Press, New York, 1983.

\bibitem{PacellaTralli2020}
F. Pacella, G. Tralli,
\emph{Overdetermined problems and constant mean curvature surfaces in cones},
Rev. Mat. Iberoam. \textbf{36}, 841--867 (2020).


\bibitem{Quinn1970}
F. Quinn, \emph{Transversal approximation on Banach manifolds}, Proc. Sympos. Pure Math. \textbf{15}, Amer. Math. Soc., Providence, RI, 213--222 (1970).

\bibitem{RitoreRosales2004}
M. Ritor\'e, C. Rosales,
\emph{Existence and characterization of regions minimizing perimeter under a volume constraint inside Euclidean cones},
Trans. Amer. Math. Soc. \textbf{356}, 4601--4622 (2004).

\bibitem{SautTemam1979} J.-C. Saut, R. Temam, \emph{Generic properties of
nonlinear boundary value problems}, Comm. Partial Differential Equations
\textbf{4}, 293--319 (1979).

\bibitem{Uhlenbeck1972}
K. Uhlenbeck,
\emph{Eigenfunctions of Laplace operators},
Bull. Amer. Math. Soc. \textbf{78}, 1073--1076 (1972).

\bibitem{Uhlenbeck1976}
K. Uhlenbeck,
\emph{Generic properties of eigenfunctions},
Amer. J. Math. \textbf{98}, 1059--1078 (1976).


\bibitem{Wittmann}
J. Wittmann,
\emph{The Banach manifold $C^k(M,N)$},
Differential Geom. Appl. \textbf{63}, 166--185 (2019).

\end{thebibliography}
\end{document}